\newtheorem{theorem}{Theorem}
\newtheorem{lemma}[theorem]{Lemma}
\newtheorem{corollary}[theorem]{Corollary}
\newtheorem{theorem-definition}[theorem]{Theorem-Definition}
\newtheorem{proposition}[theorem]{Proposition}
\theoremstyle{definition}
\newtheorem{definition}[theorem]{Definition}
\newtheorem{remark}[theorem]{Remark}
\newtheorem*{theorem*}{Theorem}
\numberwithin{equation}{section} \numberwithin{figure}{section}
\numberwithin{equation}{section}
\newcommand{\F}{\mathbb{F}}
\newcommand{\E}{\E_{\infty}}
\author{Manuel Rivera, Felix Wierstra, Mahmoud Zeinalian}
\newcommand{\Addresses}{{
  \bigskip
  \footnotesize

    \textsc{Manuel Rivera, Department of Mathematics, Purdue University, 150 N. University Street, West Lafayette, IN 47907-2067}\par\nopagebreak \textit{E-mail address} \texttt{manuelr@purdue.edu}

    \medskip

  \textsc{Felix Wierstra, Korteweg-de Vries Institute for Mathematics, University of Amsterdam, Science Park 107, Postbus 94248,
1090 GE Amsterdam, The Netherlands}\par\nopagebreak
  \textit{E-mail address} \texttt{f.p.wierstra@uva.nl}

  \medskip
  \medskip
  
  \textsc{Mahmoud Zeinalian, Department of Mathematics, City University of New York, Lehman College, 250 Bedford Park Blvd W, Bronx, NY 10468
   }\par\nopagebreak
  \textit{E-mail address} \texttt{mahmoud.zeinalian@lehman.cuny.edu}

}
}
\subjclass[2020]{55P15, 57T30, 55P60, 55P62, 55U15}
\begin{document}

\title[]{The simplicial coalgebra of chains determines homotopy types rationally and one prime at a time}
\maketitle

\begin{abstract}
We prove that the simplicial cocommutative coalgebra of singular chains on a connected topological space determines the homotopy type rationally and one prime at a time, without imposing any restriction on the fundamental group. In particular, the fundamental group and the homology groups with coefficients in arbitrary local systems of vector spaces are completely determined by the natural algebraic structure of the chains. The algebraic structure is presented as the class of the simplicial cocommutative coalgebra of chains under a notion of weak equivalence induced by a functor from coalgebras to algebras coined by Adams as the cobar construction. The fundamental group is determined by a quadratic equation on the zeroth homology of the cobar construction of the normalized chains which involves Steenrod's chain homotopies for cocommutativity of the coproduct. The homology groups with local coefficients are modeled by an algebraic analog of the universal cover which is invariant under our notion of weak equivalence. We conjecture that the integral homotopy type is also determined by the simplicial coalgebra of integral chains, which we prove when the universal cover is of finite type.
 \end{abstract} 

\Addresses

\section{Introduction}

One of the main goals of algebraic topology is to classify topological spaces, up to a specified notion of equivalence, by means of algebraic invariants. In this paper, we use the  singular chains on a space together with the coproduct induced by the diagonal map to classify homotopy types over a field. By combining Adams' work on the cobar construction in \cite{A56} with Steenrod's celebrated work on cohomology operations introduced in \cite{St47}, we use the (homotopy) cocommutativity of the diagonal to recover the fundamental group in its full generality. We further show that the simplicial cocommutative coalgebra of chains determines all homology groups with coefficients in any possible local system of vector spaces over a field. When we assume that the universal cover is of finite type, i.e. all its homology groups are finitely generated, then we also show that the integral homotopy type is completely determined by the simplicial cocommutative coalgebra of chains. We conjecture that this result holds for all connected spaces without any finite type assumptions on the universal cover.

Steenrod operations began a revolution of progress in algebraic topology, which became the rich and successful field of stable homotopy theory. In the unstable setting, the work of Sullivan and Quillen treated rational homotopy types, with strong conditions on the fundamental group, through chain and cochain level algebraic structure. Motivated by the geometric problem of understanding the diffeomorphism class of compact smooth manifolds, Sullivan proved that two simply connected spaces of finite rational type are rationally homotopy equivalent if and only if their commutative differential graded (cdg) algebras of rational polynomial forms are quasi-isomorphic \cite{S77}. Quillen obtained a similar statement for simply connected spaces (without finiteness assumptions) via cocommutative dg rational coalgebras \cite{Q69}. Their results and machinery can be improved to include nilpotent spaces, i.e. spaces with nilpotent fundamental group acting nilpotently on the higher homotopy groups. 

Since the appearance of the results of Sullivan and Quillen, there have been different approaches to classifying spaces up to Bousfield localization or completion with respect to fields of arbitrary characteristic \cite{G95}, \cite{M01} and with respect to integer coefficients \cite{M06}. In some form or another, the cocommutative diagonal map studied by Steenrod, either before or after chain approximation, appears again in all of these works. The end goal of this line of research is to understand in complete generality what a homotopy type is in terms of algebraic data. 

However, all of these approaches involve notions of equivalence which are not strong enough to capture all of the fundamental group. Consequently, many of the statements either require strong restrictions on the fundamental group or determine spaces up to ambiguity on the fundamental group. For example, Goerss showed in \cite{G95} that the simplicial cocommutative coalgebra of chains over a field determines spaces up to Bousfield localization, a notion of localization for spaces under which the fundamental group is not preserved. In our approach, we follow a divide and conquer strategy by first obtaining the fundamental group from the algebraic structure of the chains, constructing the universal cover, and then applying localization techniques to the universal cover taking advantage of its simple connectivity. The main result of this article is the following.
\\
\\
\noindent \textbf{Main Theorem.} \textit{For any field $\mathbb{F}$, two reduced Kan complexes $X$ and $Y$ can be connected by a zig-zag of $\pi_1$-$\mathbb{F}$-equivalences if and only if their connected simplicial cocommutative coalgebras of chains $\mathbb{F}X$ and $\mathbb{F}Y$ can be connected by a zig-zag of $\Omega$-quasi-isomorphisms.}
\\

We briefly explain the terminology in the above statement, its significance, and the main ingredients used in the proof. A Kan complex is \textit{reduced} if it has a single vertex. For example, any pointed topological space $(Z,z)$ gives rise to a reduced Kan complex $\text{Sing}(Z,z)$ whose $n$-simplices consist of all continuous maps $\sigma: \Delta^n \to Z$ such that $\sigma(v_i)=z$ for all vertices $v_0,...,v_n \in \Delta^n$.

Let $R$ be an arbitrary commutative unital ring. A map $f:X\to Y$ between reduced Kan complexes is a \textit{$\pi_1$-$R$-equivalence} if it induces an isomorphism of fundamental groups $$\pi_1(f): \pi_1(X) \xrightarrow{\cong} \pi_1(Y)$$ and an isomorphism $$H_*(\widetilde{f};R): H_*(\widetilde{X};R) \xrightarrow{\cong} H_*(\widetilde{Y};R)$$ between the homology groups with $R$-coefficients of the universal covers. Equivalently, a map $f$ is a $\pi_1$-$R$-equivalence if and only if it induces an isomorphism on fundamental groups and on all homology groups with values in every possible local system of $R$-modules. If $f: X \to Y$ is a $\pi_1$-$R$-equivalence then $f$ is an $R$-homology equivalence, i.e. $H_*(f;R): H_*(X;R) \to H_*(Y;R)$ is an isomorphism, but not vice-versa. Note that a map between reduced Kan complexes is a $\pi_1$-$\mathbb{Z}$-equivalence if and only if it is a homotopy equivalence.

A \textit{simplicial cocommutative $R$-coalgebra} $C$ is a simplicial object in the category of cocommutative counital $R$-coalgebras. To every simplicial set $X$ we can associate a simplicial cocommutative coalgebra by defining $RX$ as the free $R$-module generated by the simplices of $X$ and with the face and degeneracy maps induced by the face and degeneracy maps of $X$. The cocommutative coalgebra structure is the one induced by the diagonal map of simplicial sets $X \rightarrow X \times X$. It further turns out that if the simplicial set $X$ is reduced, then $RX$ is coaugmented and connected, meaning that it is one dimensional in degree $0$ and that there is a canonical map from $R$ to $RX$, where $R$ is seen as the constant simplicial cocommutative coalgebra.
 
To each simpicial cocommutative  coalgebra $C$, we can functorially associate a differential graded coassociative coalgebra $N_*(C)$ which is called the normalized chains. When $C$ is connected (resp. coaugmented) then $N_*(C)$ is connected (resp. coaugemented) as well. We say that a morphism $f:C \rightarrow D$ of connected  simplicial cocommutative coalgebras is an $\Omega$\textit{-quasi-isomorphism} if the induced morphism of normalized chains is a quasi-isomorphism after applying the cobar construction $\Omega$, i.e. if the map
\[
\Omega N_*(f): \Omega N_*(C) \rightarrow \Omega N_*(D),
\] 
 is a quasi-isomoprhism. Any $\Omega$-quasi-isomorphism is a quasi-isomorphism but not vice-versa.

The proof of our main theorem relies on the following constructions and results, which hold over an arbitrary integral domain $R$ and are also of independent interest:
\\
\indent 1) To any connected simplicial cocommutative coalgebra we may associate functorially a \textit{fundamental bialgebra}, a construction which is homotopical in the sense that it is invariant under $\Omega$-quasi-isomorphisms of simplicial cocommutative coalgebras.
\\
\indent 2) The fundamental bialgebra of the simplicial coalgebra of chains $RX$ on any reduced Kan complex $X$ is naturally isomorphic to the fundamental group Hopf algebra $R[\pi_1(X)]$.  In other words, the natural (co)algebraic structure of the chains $RX$ on a reduced Kan complex $X$ determines the fundamental group $\pi_1(X)$ in complete generality, through the \textit{group-like elements functor}, and this data is preserved along $\Omega$-quasi-isomorphisms. More precisely, $\pi_1(X)$ is determined by the quadratic equation 
$$\nabla (\alpha) = \alpha \otimes \alpha$$
where $$\nabla: H_0(\Omega N_*(RX )) \to H_0(\Omega N_*(RX )) \otimes H_0(\Omega N_*(RX ))$$ is a coproduct on the zeroth-homology of the cobar construction induced by the $E_2$-coalgebra structure of the normalized chains $N_*(RX)$. The coproduct $\nabla$ is therefore part of the higher hierarchy of homotopies introduced by Steenrod in \cite{St47} and described in terms of the $E_{\infty}$-operadic framework in \cite{BF04}. The extension of Adams' classical cobar theorem to non-simply connected spaces, proven by the first and third author, lies at the bottom of the fact that the fundamental group can be determined algebraically from the chains \cite{RZ16}, \cite{R19}.  
\\
\indent 3) To any connected simplicial cocommutative coalgebra we may associate functorially a \textit{universal cover}, which is a new simplicial cocommutative coalgebra equipped with an action of the fundamental bialgebra. This construction mirrors the passage from a pointed space to its universal cover. The main idea is equipping a simplicial version of Brown's twisted tensor product with an appropriate non-linear algebraic structure. These constructions, which constitute the key technical input developed in the paper, are introduced as part of a more general theory of simplicial twisted tensor products and simplicial twisting cochains. 
\\
\indent 4) We apply Goerss' results from \cite{G95} relating simplicial cocommutative coalgebras over a field to Bousfield localization at the level of universal covers.

Our main result is particularly important in the cases when $\mathbb{F}= \mathbb{Q}$, the field of rational numbers, and when $\mathbb{F}=\mathbb{F}_p$, the finite field with $p$-elements for a prime $p$. Recall that if a map of spaces induces an isomorphism on homology with $\mathbb{Q}$-coefficients and an isomorphism on homology with $\mathbb{F}_p$-coefficients for each prime $p$, then it induces an isomorphism on integral homology.  This observation, together with the main result of this paper, the classical fracture theorems and arithmetic square, and the fact that $\pi_1$-$\mathbb{Z}$-equivalences are exactly homotopy equivalences, leads us to pose the following conjecture.
\\
\\
\noindent \textbf{Conjecture. }\textit{Two reduced Kan complexes $X$ and $Y$ are homotopy equivalent if and only if their connected simplicial cocommutative coalgebras of chains $\mathbb{Z}X$ and $\mathbb{Z}Y$ can be connected by a zig-zag of  $\Omega$-quasi-isomorphims.}
\\
\\
\indent One direction of the above conjecture already follows from \cite{RWZ18}, where we showed that a map $f: X\to Y$ is a homotopy equivalence between reduced Kan complexes if and only if $\mathbb{Z}f: \mathbb{Z}X \to \mathbb{Z}Y$ is an $\Omega$-quasi-isomorphism, extending a classical theorem of Whitehead. 

The strongest results in the problem of finding complete algebraic models for spaces over fields of arbitrary characteristic have appeared in the work of Mandell \cite{M06}, \cite{M01}.  Mandell proved a classification theorem for nilpotent finite type $p$-complete spaces using the framework of $E_{\infty}$-algebras, an up to (coherent) homotopy version of commutative algebras \cite{M01}. $E_{\infty}$-(co)algebras may be interpreted to be more ``algebraic" than simplicial (co)algebras  in the sense that they are described in terms of operations and relations on an abelian group using the framework of operads and do note involve a ``spatial parameter"  directly as in the case of simplicial coalgebras. Moreover, Mandell describes the sense in which the functor of cochains considered as a $\mathbb{F}_p$-$E_{\infty}$-algebra is homotopically fully faithful on nilpotent finite type $p$-complete spaces. In this theory, $E_{\infty}$-algebras are considered under quasi-isomorphism, a notion suitable to study nilpotent finite type spaces but not strong enough to capture the fundamental group in complete generality.

Mandell goes further and proves an integral detection statement by means of an arithmetic square argument \cite{M06}. Namely, he proves that two nilpotent spaces of finite type $X$ and $Y$ are weak homotopy equivalent if and only if their $E_{\infty}$-algebras of integral singular cochains are quasi-isomorphic. We use this result, together with our constructions, to prove the following special case of the above conjecture.
\\
\\
\noindent \textbf{Theorem. }\textit{Let $X$ and $Y$ be two reduced Kan complexes whose universal covers are of finite type. If the integral chains $\mathbb{Z}X$ and $\mathbb{Z}Y$ can be connected by a zig-zag of $\Omega$-quasi-isomorphisms of connected simplicial cocommutative coalgebras each of which is projective as a $\mathbb{Z}$-module, then $X$ and $Y$ are homotopy equivalent.}
\\

We also conjecture that two connected Kan complexes are homotopy equivalent if and only if their $E_{\infty}$-coalgebras of chains are $\Omega$-quasi-isomorphic. This conjecture could also possibly be improved to arbitrary simplicial sets by incorporating an algebraic localization procedure into the notion of $\Omega$-quasi-isomorphism. Furthermore, there is recent evidence that one may be able to obtain a fully faithful (integral) model for homotopy types  by considering more algebraic structure. This has been studied in a recent preprint of Yuan, \cite{Y19}, where the extra structure is encoded using spectra but not quite in terms of the philosophy and algebraic structures considered by Mandell, who encodes spaces in terms of discrete abelian groups equipped with a countable number of operations and relations considered under a ``good" homotopical notion of weak equivalence.

The general goal of this program is to understand in purely algebraic terms what a homotopy type is and then use the resulting algebraic models effectively. This general problem includes several subtleties such as making a mathematically precise formulation of what ``algebraic" means, a point we do not address in this article. The term ``algebraic" may have different interpretations such as computable algebraic, operadic and derived algebraic, positive degree algebraic, or modeled by an abelian category, and so on. For each interpretation of the term one may try to explore to what extent one can model homotopy types. We also believe the results of this general program, including the main theorem of this paper, will be useful in the study of the topology of geometric spaces, such as compact $3$-manifolds, with arbitrary fundamental group.

The organization of this article is as follows. In section 2 we discuss algebraic preliminaries and discuss the notion of $\Omega$-quasi-isomorphism. In section 3 we recall those parts of \cite{G95} that are relevant for this article and discuss the notion of $\pi_1$-$R$-equivalence. In section 4 and section 5 the main technical tools are developed, these include a theory of simplicial twisted tensor products for simplicial coalgebras and simplicial algebras through which we obtain the notion of the universal cover of a connected simplicial cocommutative coalgebra as a special case. In section 6 we prove our main theorem by applying the machinery developed in the previous sections. Finally in section 7 we prove a special case of our conjecture in the integral case.
\\
\\
\noindent \textbf{Acknowledgements.}
The second author was supported by grant number 2019-00536 from the Swedish Research Council. The second and third author would further like to thank the Max Planck Institute for Mathematics for its hospitality and excellent working conditions. The second author would also like to thank the Graduate Center of the City University of New York for their hospitality and excellent working conditions during his stay there. We would like to thank Dennis Sullivan, Martin Bendersky, Omar Antol\'in Camarena, Michael Mandell, Kathryn Hess, Rob Thompson, and George Raptis for stimulating discussions, exchanges, and comments. We would like to thank the anonymous referee for general comments which helped us improve the introduction. 

\section{Algebraic Preliminaries}
In this section we introduce notation, recall a several algebraic definitions and constructions, and discuss the notion of $\Omega$-\textit{quasi-isomorphism} between simplicial coalgebras. This notion was originally proposed in Lefevre-Hasegawa's thesis and it is essential in Koszul duality theory of algebraic structures, see \cite{LH03}, \cite{LV12}. In the Lie context, a similar notion was used in \cite{HS97}. 

\subsection{Algebras and coalgebras}

Let $R$ be a commutative ring with unit. All tensor products will be over $R$ unless stated otherwise. In some of the statements in this paper, we will assume that $R$ is a field; when this is the case we will denote this field by $\mathbb{F}$. Later on, in section 6, we will denote an arbitrary algebraically closed field by $\mathbb{E}$.

We will consider $R$-algebras and $R$-coalgebras in two different settings: the \textit{differential graded} (dg) setting and the \textit{simplicial} setting. For the definitions of dg algebras and dg coalgebras we refer the reader to \cite{LV12}. All differentials will have degree $-1$. 
 
Denote by $\textbf{Alg}_{R}$ and $\textbf{Coalg}_{R}$ the categories of associative unital augmented $R$-algebras and coassociative counital coaugmented $R$-coalgebras, respectively.  Denote by $\textbf{dgAlg}_{R}$ the category of differential graded augmented associative $R$-algebras and by $\textbf{dgCoalg}_{R}$ the category of dg coaugmented conilpotent coassociative $R$-coalgebras. In this paper, all (co)algebras will be (co)associative and (co)unital. We say a (co)algebra is $R$-\textit{flat} if it is flat as an $R$-module. 

We say that $C \in \textbf{dgCoalg}_{R}$ is \textit{connected} if it is non-negatively graded and the coaugmentation $c: R \to C$ induces an isomorphism $R \cong C_0$. Let $\textbf{dgCoalg}^0_{R}$ be the full subcategory of connected dg coalgebras in $\textbf{dgCoalg}_{R}$.

Let $\mathbf{\Delta}$ be the simplex category. A \textit{simplicial algebra} is a functor $A:\mathbf{\Delta}^{op} \to \textbf{Alg}_{R}$ and a \textit{simplicial coalgebra} is a functor $C: \mathbf{\Delta}^{op} \to  \textbf{Coalg}_{R}$. Denote by $\textbf{sAlg}_{R}$ and $\textbf{sCoalg}_{R}$ the categories of simplicial algebras and simplicial coalgebras, respectively, with natural transformations of functors as morphisms. If $C$ is a simplicial coalgebra we write $C([n])=C_n$, so each $C_n$ is equipped with a coassociative coproduct usually denoted by $$\Delta_n: C_n \to C_n \otimes C_n.$$ 
Equivalently, a simplicial (co)algebra is a set of (co)algebras $\{V_0, V_1, V_2,...\}$ equipped with face maps $d^n_i: V_n \to V_{n-1}$, $n>0$, $0 \leq i \geq n$ and degeneracy maps $s^n_j: V_n \to V_{n+1}$, $n \leq0$, $0\leq i \leq n$, which are all (co)algebra maps and satisfy the simplicial identities. 

We say that $C$ is a simplicial \textit{cocommutative} coalgebra if each $(C_n, \Delta_n)$ is cocommutative for all $n \geq 0$. A simplicial coalgebra $C$ is \textit{connected} if there is an isomorphism of coalgebras $(C_0, \Delta_0) \cong R$, where $R$ is given the coproduct determined by $1 \mapsto 1 \otimes 1$. We denote by $\textbf{scCoalg}_{R} \subset \textbf{sCoalg}_{R}$ the full subcategory of simplicial cocommutative coalgebras and by $\textbf{scCoalg}^0_{R} \subset \textbf{sCoalg}_{R}$ the full subcategory of connected simplicial cocommutative coalgebras.

The tensor product of simplicial (co)algebras $V \otimes W$ is defined degree-wise by setting $(V \otimes W)_n= V_n \otimes W_n$ with face and degeneracy maps obtained by tensor product, i.e. $d^{V \otimes W}_i= d^V_i \otimes d^W_i$ and $s^{V \otimes W}_i= s^V_i \otimes s^W_i$. 

Any simplicial coassociative coalgebra gives rise to a dg coassociative coalgebra through the \textit{normalized chains functor} $$N_*: \textbf{sCoalg}_{R} \to \textbf{dgCoalg}_{R}$$ defined as follows. Given a simplicial coassociative coalgebra $C$ with coproducts $\Delta_n: C_n \to C_n \otimes C_n$, let $(N_*(C), \partial)$ be the dg $R$-module obtained as the quotient $N'_*(C)/ D_*(C)$ where $N'_n(C)= C_n$ equipped with differential $$\partial= \sum_i(-1)^{i}d_i: N'_{*}(C) \to N'_{*-1}(C)$$ given by the alternating sum of the face maps of $C$, and $D_*(C) \subset N'_*(C)$ is the sub-complex generated by degenerate elements. The chain complex $(N_*(C), \partial)$ becomes a dg coassociative coalgebra when equipped with the coproduct
$$\delta: N_*(C) \xrightarrow{N_*(\Delta)} N_*(C \otimes C) \xrightarrow{AW} N_*(C) \otimes N_*(C).$$
In the above composition, $AW$ is the Alexander-Whitney map, which is given on any $x \otimes y \in (C \otimes C)_n= C_n \otimes C_n$ by $$AW(x \otimes y)=  \sum_{p+q=n} f_p(x) \otimes l_q(y),$$
where $f_p$ denotes the front $p$-face induced by the map $[p] \to [p+q]$ in $\mathbf{\Delta}$ determined by $i \mapsto i$ and $l_p$ is the last $q$-face induced by the map $[q] \to [p+q]$ in $\mathbf{\Delta}$ determined by $i \mapsto i+p$. The construction $(C, \Delta) \mapsto (N_*(C), \partial, \delta)$ is natural with respect to maps of simplicial coalgebras and consequently defines a functor.

Finally, we briefly recall the notions of bialgebras and Hopf algebras. An \textit{$R$-bialgebra} $B=(B,\mu,\nabla, u, \epsilon)$ consists of an $R$-module $B$ equipped with a unital algebra structure $(\mu: B \otimes B \to B, u: R \to B)$ together with a counital coalgebra structure $(\nabla: B \to B \otimes B, \epsilon: B \to R)$ which are compatible in the sense that the coproduct $\nabla$ and the counit $\epsilon$ are algebra maps. As a consequence of this definition, we have that the product $\mu$ and the unit $u$ are coalgebra maps. A map of bialgebras is a linear map which is simultaneously  an algebra and a coalgebra map. We also have dg and simplicial versions of bialgebras defined analogously to algebras and coalgebras. We will use the notation $\textbf{scBialg}_{R}$ to denote the category of simplicial cocommutative bialgebras. Note that the simplicial bialgebras in $\textbf{scBialg}_{R}$ are not required to be commutative.
 
A bialgebra $B=(B, \mu, \nabla, u, \epsilon)$ is a \textit{Hopf algebra} if there is a map $s: B \to B$, satisfying
$$\mu \circ (s \otimes id) \circ \nabla =u \circ \epsilon= \mu \circ (id \otimes s) \circ \nabla.$$ The map $s: B \to B$ is called the \textit{antipode}. If a bialgebra has an antipode then it is unique. A map of Hopf algebras is a map of underlying bialgebras. Any map of Hopf algebras preserves the antipodes. 

\subsection{Bar and cobar constructions}  We now recall the bar and cobar functors. We refer to \cite{EM53}, \cite{A56} and \cite{HMS74} for further details. 

Let $(A, d_A) \in \textbf{dgAlg}_{R}$ and suppose $(M,d_M)$ and $(N,d_N)$ are right and left dg $A$-modules, respectively. We denote both the $A$-action and the product in $A$ by $a \cdot b$. Recall that the \textit{two sided bar construction} is the chain complex $(B_*(N, A,M), \partial)$ whose underlying graded $R$-module is given by $$B_p(N,A,M):= (N \otimes Ts\overline{A} \otimes M)_p,$$ where $ \overline{A} = \text{ker} (a)$ denotes the kernel of the augmentation $a: A \to R$,  $s$ is the shift by $+1$ functor, and $$Ts\overline{A}= R \oplus s\overline{A} \oplus (s\overline{A})^{\otimes 2} \oplus (s\overline{A})^{\otimes 3} \oplus ... $$ 
The subscript on $(N \otimes Ts\overline{A} \otimes M)_p$ means total degree $p$ elements in $N \otimes Ts\overline{A} \otimes M$. In what follows we will drop the $s$ for notational simplicity. We write tensors in $B_n(N,A,M)$ as
$n[a_1|...|a_k] m$, where $n \in N, m \in M$ and $a_i \in \overline{A}$ for $i=1,...,k$. Hence, $n[a_1|...|a_k] m \in B_p(N,A,M)$ means that $|n| + |a_1|+ ...+|a_k| + k +|m|=p$. The differential $d_{\text{bar}}: B_p(N,A,M) \to B_{p-1} (N,A,M)$ is defined by
\begin{eqnarray*}
d_{\text{bar}}( n[a_1|...|a_k] m) =  d_N(n)[a_1|...|a_k]m + \sum_{i=1}^k (-1)^{\epsilon_i} n[a_1|...|d_Aa_i|...|a_k] m 
\\+ (-1)^{|n| + |a_1| + ...|a_k|+k} n[a_1|...|a_k] d_M(m)
\\+(-1)^{|n|}( n\cdot a_1)[a_2|...|a_k] m + \sum_{i=2}^k(-1)^{\epsilon_i-1}n[a_1|....|(a_{i-1}\cdot a_{i})| ...|a_k]m
\\+(-1)^{|n|+ |a_1| + ...|a_k|+k-1} n[a_1|...|a_{k-1}] (a_k\cdot m),
\end{eqnarray*}
where $\epsilon_i= |n|+ |a_1+ ...+|a_i|+i$. It is straightforward to check that $d_{\text{bar}}^2=0$. In this article, we will only consider the two sided bar construction $B(R, A, M)$ where $M$ is a left dg $A$-module and $R$ is considered as a right dg $A$-module concentrated in degree $0$ and with right $A$-action induced by the augmentation $a: A \to R$. 

We now define the version of the cobar construction which is relevant for this article. The \textit{cobar construction} is a functor 
$$\Omega: \textbf{dgCoalg}^0_{R} \to \textbf{dgAlg}_{R}$$
defined as follows. For any $C=(C, \partial_C, \Delta) \in \textbf{dgCoalg}^0_{R}$, the underlying graded algebra  of $\Omega(C)$ is the tensor algebra $$Ts^{-1}\overline{C}= R \oplus s^{-1}\overline{C} \oplus (s^{-1}\overline{C})^{\otimes 2} \oplus (s^{-1}\overline{C})^{\otimes 3} \oplus ... ,$$ where $\overline{C}:= C/C_0$, and $s^{-1}$ is the shift by $-1$ functor. We denote monomials in $Ts^{-1}\overline{C}$ by $\{x_1 |...|x_k\}$ where $x_i \in \overline{C}$, dropping the $s^{-1}$ for notational simplicity. Hence, the degree of $\{x_1 |...|x_k\} \in Ts^{-1}\overline{C}$ is $|x_1| + ... + |x_k|-k$. The augmentation is given by the canonical projection $a: Ts^{-1}\overline{C} \to R$. The differential is defined by extending the linear map $$- s^{-1} \circ \partial  \circ s^{+1} + (s^{-1} \otimes s^{-1}) \circ \Delta  \circ s^{+1}: s^{-1}\bar{C} \to T(s^{-1} \bar C)$$ as a derivation to obtain a map $D: T(s^{-1} \bar{C}) \to T(s^{-1} \bar{C})$. The coassociativity of $\Delta$, the compatibility of $\partial$ and $\Delta$, and the fact that $\partial^2 =0$ together imply that $D^2=0$.

\subsection{Weak equivalences of coalgebras} One of the goals of this paper is to understand the homotopical meaning of the following two notions of weak equivalences between (simplicial and dg) coalgebras.

 \begin{definition} 
 
(a) A map $f: C \to C'$ in $\textbf{dgCoalg}_{R}$ is a \textit{quasi-isomorphism of dg coalgebras} if the induced map on homology $H_*(f): H_*(C) \to H_*(C')$ is an isomorphism. 
 \\
 (b) A map $f: C \to C'$ in $\textbf{sCoalg}_{R}$ is a \textit{quasi-isomorphism of simplicial coalgebras} if the induced map of dg coalgebras $N_*(f): N_*(C) \to N_*(C')$ after applying the normalized chains functor is a quasi-isomorphism of dg coalgebras.
 \end{definition}
Recall that a map $f:C \to C'$ in $\textbf{sCoalg}_{R}$ is a quasi-isomorphism if and only if $f$ is a weak homotopy equivalence between the underlying simplicial sets of $C$ and $C'$. 

We also have the following notions, which are stronger than the ones defined above.
 
 \begin{definition}
 (a) A map $f: C \to C'$ in $\textbf{dgCoalg}^0_{R}$ is an \textit{$\Omega$-quasi-isomorphism of connected dg coalgebras} if the induced map after applying the cobar functor $\Omega(f): \Omega(C) \to \Omega(C')$ is a quasi-isomorphism of dg algebras.
 \\
 (b) A map $f: C \to C'$ in  $\textbf{sCoalg}^0_{R}$ is an \textit{$\Omega$-quasi-isomorphism of connected simplicial coalgebras} if the induced map of dg coalgebras $N_*(f): N_*(C) \to N_*(C')$ after applying the normalized chains functor is an $\Omega$-quasi-isomorphism of connected dg coalgebras.
 \end{definition}
 
\begin{proposition}\label{cobarstrong}
Any $\Omega$-quasi-isomorphism between connected dg $R$-flat coalgebras is a quasi-isomor\-phism, but not vice versa. 
\end{proposition}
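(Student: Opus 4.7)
The plan is to invoke the classical bar--cobar adjunction $\Omega \dashv B$, where $B \colon \textbf{dgAlg}_{R} \to \textbf{dgCoalg}^{0}_{R}$ denotes the reduced bar construction $BA := B(R,A,R)$. For any $R$-flat connected dg coalgebra $C$, the unit $\eta_{C} \colon C \to B\Omega C$ of this adjunction is a natural quasi-isomorphism; this is the classical bar--cobar resolution, which I would quote from \cite{HMS74} or \cite{LV12} rather than reprove.

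The key technical input I need is that $B$ sends quasi-isomorphisms between $R$-flat augmented dg algebras to quasi-isomorphisms. I would establish this by filtering $BA = T(s\overline{A})$ by word length, $F_{p}BA = \bigoplus_{k \leq p}(s\overline{A})^{\otimes k}$. Both pieces of the bar differential respect this filtration, since the internal differential preserves length while the multiplication piece decreases it by one, so on the associated graded only the internal differential survives. By $R$-flatness together with the K\"unneth formula, $E^{1}_{*,p} \cong \bigl(sH_{*}(\overline{A})\bigr)^{\otimes p}$, which depends only on the quasi-isomorphism class of $A$. Since $\overline{A} = \overline{\Omega C}$ is concentrated in non-negative degrees, $(s\overline{A})^{\otimes p}$ sits in degrees $\geq p$, so the filtration is bounded in each total degree, convergence is automatic, and the spectral sequence comparison theorem yields the claim.

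Combining the two ingredients, I would consider the naturality square
\[
\begin{CD}
C @>f>> C' \\
@V\eta_{C}VV @VV\eta_{C'}V \\
B\Omega C @>B\Omega f>> B\Omega C'.
\end{CD}
\]
The vertical maps are quasi-isomorphisms by the first paragraph, and the bottom map is a quasi-isomorphism by the second paragraph applied to the hypothesis on $\Omega f$. Two-out-of-three then yields that $f$ is a quasi-isomorphism.

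For the converse failure, I would exhibit a small explicit counter-example. Let $C$ be the connected dg coalgebra with basis $\{1, x, y\}$ in degrees $0, 1, 2$, differential $\partial y = x$, and coproduct determined by $\Delta x = x \otimes 1 + 1 \otimes x$ and $\Delta y = y \otimes 1 + 1 \otimes y + x \otimes x$. Then $C$ is acyclic, so the augmentation $\epsilon \colon C \to R$ is a quasi-isomorphism of connected dg coalgebras. On the other hand $\Omega C$ is the free associative algebra on $a = s^{-1}x, b = s^{-1}y$ of degrees $0, 1$ with $Da = 0$ and $Db = a^{2} - a$, whence $H_{0}(\Omega C) \cong R[a]/(a^{2}-a) \cong R \times R$, which differs from $H_{0}(\Omega R) = R$, so $\epsilon$ is not an $\Omega$-quasi-isomorphism. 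The main obstacle in the whole argument is the convergence of the length-filtration spectral sequence in step two; the connectivity hypothesis on $C$ is exactly what makes the filtration finite in each total degree and thus allows the comparison theorem to apply.
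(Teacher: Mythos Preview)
Your proof is correct and follows essentially the same route as the paper, which simply cites Propositions 2.4.2 and 2.4.3 of \cite{LV12}: those are precisely the two ingredients you spell out (bar preserves quasi-isomorphisms between $R$-flat dg algebras via the word-length filtration, and the bar--cobar unit is a quasi-isomorphism), assembled via the same naturality square and two-out-of-three argument. The paper does not exhibit an explicit counterexample for the failure of the converse, so your acyclic coalgebra $C$ with $H_{0}(\Omega C)\cong R\times R$ is a welcome addition; note that depending on sign conventions one may get $Db=\pm(a^{2}\pm a)$, but in every case $H_{0}(\Omega C)$ is a rank-two $R$-algebra and the conclusion stands.
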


\begin{proof} This follows from exactly by the same arguments given in Propositions 2.4.2 and 2.4.3 of \cite{LV12}, where this is shown when $R$ is a field. We assume flatness since, for general rings $R$, the bar construction of dg $R$-algebras will not preserve quasi-isomorphisms, but it does when restricted to dg $R$-flat algebras, such as the cobar construction of a connected dg $R$-flat coalgebra.
\end{proof}

The two notions  of $\Omega$-quasi-isomorphism and quasi-isomorphism agree on simply connected dg $R$-flat coalgebras, namely, dg $R$-flat coalgebras $C$ such that $C_1=0$ and $C_0 \cong R$.

\begin{proposition}\label{simpconnected} Let $C$ and $C'$ be simply connected dg $R$-flat coalgebras. Then $f: C \to C'$ is a quasi-isomorphism if and only if $f$ is an $\Omega$-quasi-isomorphism.
\end{proposition}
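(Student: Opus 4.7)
The plan is as follows. The implication ``$\Omega$-quasi-isomorphism $\Rightarrow$ quasi-isomorphism'' is already given by Proposition~\ref{cobarstrong}, so the only content to prove is the converse: assuming $f \colon C \to C'$ is a quasi-isomorphism of simply connected dg $R$-flat coalgebras, I want to show that $\Omega f \colon \Omega C \to \Omega C'$ is a quasi-isomorphism of dg algebras.

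For this I would use the word-length filtration on the cobar construction. Write $D = D_0 + D_1$, where $D_0$ is the derivation extending $-s^{-1} \partial_C s$ (preserves the number of tensor factors) and $D_1$ is the derivation extending $(s^{-1} \otimes s^{-1}) \Delta s$ (increases the number of tensor factors by one). Setting
\[
F_p \Omega C = \bigoplus_{k \le p} (s^{-1}\overline{C})^{\otimes k},
\]
this is an exhaustive increasing filtration by subcomplexes, the map $\Omega f$ respects it, and the associated graded complex is $T(s^{-1}\overline{C})$ endowed only with the derivation coming from $\partial_C$. This is where simple connectivity enters: since $\overline{C}$ is concentrated in degrees $\ge 2$, the piece $(s^{-1}\overline{C})^{\otimes k}$ lives in degrees $\ge k$, so in each fixed total degree the filtration is finite. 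Consequently the associated spectral sequence converges strongly to $H_*(\Omega C)$, and similarly for $C'$.

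Next I would compare the $E^0$ pages. The induced map on the $k$-th filtration quotient is just $(s^{-1}\overline{f})^{\otimes k}$ as a chain map of $R$-modules. Because both $\overline{C}$ and $\overline{C'}$ are $R$-flat and $\overline{f}$ is a quasi-isomorphism, an iterated application of the Künneth formula in the flat setting shows that each tensor power $(s^{-1}\overline{f})^{\otimes k}$ is a quasi-isomorphism; this is precisely the statement, noted in the proof of Proposition~\ref{cobarstrong}, that tensor powers preserve quasi-isomorphisms among flat complexes. Hence $E^0(\Omega f)$, and therefore $E^1(\Omega f)$, is an isomorphism.

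Finally I would invoke the Eilenberg--Moore comparison theorem for strongly convergent filtered complexes to conclude that $\Omega f$ itself is a quasi-isomorphism. The main obstacle, and the point that requires some care, is the convergence bookkeeping: without simple connectivity, $(s^{-1}\overline{C})^{\otimes k}$ need not vanish in low degrees as $k$ grows, the filtration ceases to be finite in each total degree, and the spectral sequence can fail to converge; it is precisely the hypothesis $C_1 = 0$, $C_0 \cong R$ that rescues the argument. Flatness of $C$ and $C'$ enters only to guarantee the Künneth step on the $E^0$ page.
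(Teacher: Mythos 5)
Your strategy matches the paper's proof, which simply delegates to the standard spectral-sequence argument of Proposition~2.2.7 in \cite{LV12}; your write-up is essentially a reconstruction of that argument, and you are right that flatness is needed for the K\"unneth step and that Proposition~\ref{cobarstrong} already gives the easy direction. However, there is a genuine slip in the setup of the filtration.

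You set $F_p\Omega C = \bigoplus_{k\le p}(s^{-1}\overline C)^{\otimes k}$ and assert it is an increasing filtration by \emph{subcomplexes}. It is not: the part $D_1$ of the cobar differential coming from the coproduct \emph{raises} word length by one, so $D_1$ sends $F_p$ into $F_{p+1}$, not into $F_p$. (You may be thinking of the bar construction, where the product lowers word length and the analogous increasing filtration does work.) For the cobar construction the correct word-length filtration is decreasing,
\[
F^p\Omega C = \bigoplus_{k\ge p}(s^{-1}\overline C)^{\otimes k},
\]
which is preserved because $D_0$ fixes word length and $D_1$ raises it. The rest of your argument then goes through with this correction: the associated graded differential is the internal one, $E^0(\Omega f)$ is $\bigoplus_k (s^{-1}\overline f)^{\otimes k}$ and is a quasi-isomorphism by K\"unneth over the flat modules $\overline C$ and $\overline{C'}$; and simple connectivity ensures $(s^{-1}\overline C)^{\otimes k}$ sits in degrees $\ge k$, so in each total degree $n$ one has $F^{n+1}=0$, making the decreasing filtration bounded in each degree and the spectral sequence strongly convergent. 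You should also briefly justify that $\overline f$ is a quasi-isomorphism (this follows from $f$ being a quasi-isomorphism of connected coalgebras, since $C_0\cong R\cong C'_0$).
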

\begin{proof} This follows from a standard spectral sequence argument, see Proposition 2.2.7 of \cite{LV12} (in their terminology $2$-connected means simply connected). 
\end{proof}

We say that two (connected) simplicial cocommutative coalgebras $C$ and $C'$ are ($\Omega$-)\textit{quasi-isomorphic} if there is a zig-zag of ($\Omega$-)quasi-isomorphisms of (connected) simplicial cocommutative coalgebras between $C$ and $C'$. 

\subsection{Brown's twisted tensor product}
We recall Brown's definition of twisting cochains and twisted tensor products. 
Given any $C=(C,\partial_C, \Delta_C) \in \mathbf{dgCoalg}^0_{R}$ and $(A, d_A, \mu_A) \in  \mathbf{dgAlg}_{R}$, the graded $R$-module $\text{Hom}_{R}(C,A)$ becomes a graded associative algebra with \textit{convolution product} 
$$\star: \text{Hom}_{R}(C,A) \otimes \text{Hom}_{R}(C,A) \to \text{Hom}_{R}(C,A)$$ given by the formula $$f \star g = \mu_A \circ (f \otimes g) \circ \Delta_C.$$ A \textit{twisting cochain} is defined to be a linear map $\tau: C \to A$ of degree $-1$ satisfying $$\partial_{Hom} \tau + \tau \star \tau=0.$$ We also require that the compositions $C \xrightarrow{\tau} A \xrightarrow{a} R$ and $R \xrightarrow{c} C \xrightarrow{\tau} A$ are both zero, where $a$ is the augmentation of $A$ and $c$ the coaugmentation of $C$. Given any left dg $A$-module $(M,d_M)$ define
$\partial^{\tau}: C \otimes M \to C \otimes M$ by 
\begin{equation}\label{twisteddiff}
\partial^{\tau} ( x \otimes m)= \partial_C(x) \otimes m + (-1)^{|x|}x \otimes d_M(m) + \sum_{(x)}(-1)^{|x'|} x' \otimes (\tau(x'') \cdot m)
\end{equation}
where we have written $\Delta_C(x)=\sum_{(x)} x' \otimes x''$ using Sweedler notation. It follows that $\partial^{\tau} \circ \partial^{\tau}=0$, so $(C \otimes M, \partial^{\tau})$ is a chain complex called \textit{Brown's twisted tensor product} of $C$ and $M$, which we denote simply by $C \otimes_{\tau} M$. 

The twisted tensor product construction was originally introduced in \cite{B59} to model the singular chain complex of the total space of a fibration in terms of the chains in the base and the chains in the fiber, see the main statement of \cite{B59}.

For any any $C\in \mathbf{dgCoalg}^0_{R}$ the natural map $\iota: C \rightarrowtail \overline{C} \cong s^{-1} \overline{C} \hookrightarrow \Omega C$ is an example of a twisting cochain called the \textit{universal twisting cochain} of $C$. We now prove the invariance of Brown's twisted tensor product with respect to $\Omega$-quasi-isomorphisms of connected dg coalgebras in the following sense.

\begin{theorem} \label{invarianceofbrown} Let $C$ and $C'$ be two connected dg  $R$-flat coalgebras and let $M$ be a left dg $\Omega(C')$-module. Any $\Omega$-quasi-isomorphism $g: C \to C'$ induces a quasi-isomorphism of chain complexes
$$g \otimes id: C \otimes_{\iota} \Omega(g)^*M \to C' \otimes_{\iota} M,$$
where $\Omega(g)^*M$ denotes the dg $R$-module $M$ equipped with the left dg $\Omega(C)$-module structure obtained by pulling back the left $\Omega(C')$-module structure on $M$ via $\Omega(g): \Omega(C) \to \Omega(C')$
\end{theorem}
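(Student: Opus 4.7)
The plan is to identify both twisted tensor products as computing derived tensor products over the cobar algebras, and then to use that $\Omega g$ is a quasi-isomorphism of $R$-flat dg algebras.

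First I would equip $C \otimes_\iota \Omega C$ with the right dg $\Omega C$-module structure given by right multiplication on the second tensor factor. The twisted differential $\partial^\iota$ commutes with this action, because its only nontrivial summand $\sum (-1)^{|x'|} x' \otimes \iota(x'') \cdot (-)$ acts by left multiplication on the $\Omega C$-factor and therefore commutes with right multiplication. For any left dg $\Omega C$-module $M$ one then verifies a natural isomorphism of chain complexes
\[
(C \otimes_\iota \Omega C) \otimes_{\Omega C} M \;\cong\; C \otimes_\iota M,
\]
obtained by using the defining relation of the tensor product over $\Omega C$ to absorb each generator $\iota(x'')$ into the left $\Omega C$-action on $M$. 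The analogous isomorphism holds for $C'$. Next I would invoke the classical acyclicity of the universal twisted tensor product: the augmentation $C \otimes_\iota \Omega C \to R$ is a quasi-isomorphism, provable by a standard contracting homotopy and using $R$-flatness of $C$. Combined with freeness of $C \otimes \Omega C$ on $C$ as a graded right $\Omega C$-module, this exhibits $C \otimes_\iota \Omega C$ as a semifree resolution of $R$ in right dg $\Omega C$-modules, and similarly $C' \otimes_\iota \Omega C'$ over $\Omega C'$.

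Next, the map $g \otimes \Omega g : C \otimes_\iota \Omega C \to C' \otimes_\iota \Omega C'$ is a chain map compatible with the right module structures along $\Omega g$, and is a quasi-isomorphism of underlying chain complexes by two-out-of-three applied to the triangle over $R$ formed with the two augmentations. Under the identifications from the first step, the induced map
\[
(g \otimes \Omega g) \otimes \text{id}_M : (C \otimes_\iota \Omega C) \otimes_{\Omega C} \Omega g^* M \longrightarrow (C' \otimes_\iota \Omega C') \otimes_{\Omega C'} M
\]
coincides precisely with the desired map $g \otimes \text{id}$.

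The main obstacle is showing this last map is a quasi-isomorphism, despite the tensor products being taken over different algebras. I would factor it through the intermediate $(C' \otimes_\iota \Omega C') \otimes_{\Omega C} \Omega g^* M$: the first leg compares two semifree resolutions of $R$, tensored with $\Omega g^* M$ over $\Omega C$, and is a quasi-isomorphism by the standard comparison theorem for cofibrant resolutions; the second leg is a change-of-rings map that is a quasi-isomorphism because $\Omega g$ is a quasi-isomorphism of $R$-flat dg algebras, hence induces a derived equivalence between the projective module categories of $\Omega C$- and $\Omega C'$-modules. Alternatively, each of the two legs admits a direct spectral-sequence verification using the coradical filtration on $C$ (respectively $C'$), whose twisting term strictly decreases the filtration degree; the filtration is exhaustive and bounded below in each total degree by connectedness of $C$ and $C'$ and by $R$-flatness, so both spectral sequences converge strongly, and $\Omega g$ being a quasi-isomorphism of dg algebras identifies the $E^2$-pages via $H_*(\Omega g)$.
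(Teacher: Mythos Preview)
Your strategy is the same as the paper's in spirit: both sides are models for the derived tensor product $R\otimes^{L}_{\Omega C}M$ (respectively $R\otimes^{L}_{\Omega C'}M$), and an $\Omega$-quasi-isomorphism should identify them. The paper, however, chooses a different intermediary: it compares $C\otimes_\iota M$ directly to the two-sided bar construction $B(R,\Omega C,M)$ via an explicit surjective quasi-isomorphism $\phi$ (their Proposition~\ref{bar complex}), and then uses that $B(R,-,M)$ is manifestly functorial in the algebra variable and preserves quasi-isomorphisms of $R$-flat dg algebras by a one-line filtration argument (their Proposition~\ref{invarianceofbar}). A commutative square and $2$-out-of-$3$ finish the proof. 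This sidesteps entirely the change-of-rings gymnastics you are attempting.

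Your factorization, by contrast, has a genuine gap. In your first leg you tensor the quasi-isomorphism $g\otimes\Omega g: C\otimes_\iota\Omega C \to \Omega g^{*}(C'\otimes_\iota\Omega C')$ with $\Omega g^{*}M$ over $\Omega C$ and appeal to the comparison theorem for cofibrant resolutions. But $C'\otimes_\iota\Omega C'$, viewed as a right $\Omega C$-module by restriction along $\Omega g$, is \emph{not} semifree (or even evidently K-flat) over $\Omega C$: it is semifree over $\Omega C'$, and restriction of scalars does not preserve cofibrancy. So the comparison theorem does not apply, and there is no reason for that tensor to be a quasi-isomorphism. Your second leg, the change-of-rings map $(C'\otimes_\iota\Omega C')\otimes_{\Omega C}\Omega g^{*}M \to (C'\otimes_\iota\Omega C')\otimes_{\Omega C'}M$, is likewise not a quasi-isomorphism for free; the ``derived equivalence'' you invoke is exactly what needs to be proved here, not assumed. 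The coradical-filtration spectral sequence you sketch as an alternative does not obviously have the $E^2$-page you claim (the associated graded does not see $H_*(\Omega g)$ in any transparent way), so that route would need substantial further work.

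A correct variant of your approach would factor instead through the base change $(C\otimes_\iota\Omega C)\otimes_{\Omega C}\Omega C'$, which \emph{is} semifree over $\Omega C'$ and still resolves $R$ because $\Omega C\to\Omega C'$ is a quasi-isomorphism of $R$-flat algebras; then both $(C\otimes_\iota\Omega C)\otimes_{\Omega C}\Omega C'$ and $C'\otimes_\iota\Omega C'$ are semifree $\Omega C'$-resolutions of $R$ and the comparison theorem applies over $\Omega C'$. But at that point you have essentially reinvented the bar-construction argument, since $B(R,\Omega C,\Omega C')$ plays exactly this role.
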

Theorem \ref{invarianceofbrown} will follow from Propositions \ref{bar complex} and \ref{invarianceofbar} below, which use techniques and constructions similar to those appearing in \cite{HMS74}, \cite{LH03}, and \cite{P11}. 

\begin{proposition}  \label{bar complex} Let $C$ be a connected dg coalgebra and denote by $\iota: C \to \Omega(C)$ the universal twisting cochain. If $M$ is any left dg $\Omega(C)$-module, then there is a natural quasi-isomorphism of chain complexes 
\begin{equation}
\phi:  B(R,\Omega( C ), M) \to C \otimes_{\iota} M
\end{equation}
\end{proposition}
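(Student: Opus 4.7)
The plan is: (i) reduce to the universal case $M = \Omega C$, (ii) recognize both sides as semi-free resolutions of $R$ over $\Omega C$ once one establishes the acyclicity of the ``acyclic cobar complex'' $C \otimes_\iota \Omega C$, and (iii) construct $\phi$ as the natural comparison map of semi-free resolutions, extended back to general $M$ by $-\otimes_{\Omega C}M$.

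First I would write
$$
B(R,\Omega C,M) \;=\; B(R,\Omega C,\Omega C)\otimes_{\Omega C} M,\qquad C\otimes_\iota M \;=\; (C\otimes_\iota \Omega C)\otimes_{\Omega C} M,
$$
so that it suffices to build a natural morphism $\phi_0:B(R,\Omega C,\Omega C)\to C\otimes_\iota\Omega C$ of dg right $\Omega C$-modules and set $\phi:=\phi_0\otimes_{\Omega C}\mathrm{id}_M$. The map $\phi_0$ is defined on the distinguished right-$\Omega C$-basis of each side: using the decomposition $\overline{\Omega C}=\bigoplus_{n\ge1}(s^{-1}\overline C)^{\otimes n}$, let $p:\overline{\Omega C}\twoheadrightarrow s^{-1}\overline C$ be projection to the linear summand and $s:s^{-1}\overline C\to\overline C\subset C$ the degree shift; send the length-zero basis element $1\in B(R,\Omega C,\Omega C)$ to $1_C\in C$, send a length-one generator $[\alpha]$ to $s\,p(\alpha)\in C$, and kill all higher bar-length generators. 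Extending $\Omega C$-linearly on the right yields the graded map $\phi_0$. A direct calculation, using the cobar formula $d_{\Omega C}(s^{-1}c)=-s^{-1}\partial_C c+(s^{-1}\otimes s^{-1})\Delta c$ together with the universal twisting cochain identity $\partial_{Hom}\iota+\iota\star\iota=0$, verifies that $\phi_0$ is a chain map, with any correction terms on longer bar lengths determined uniquely up to $\Omega C$-chain homotopy by the acyclicity of the target.

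For the quasi-isomorphism, I would filter both sides by sub-$\Omega C$-modules to exhibit them as semi-free: filter $B(R,\Omega C,\Omega C)$ by bar-length and $C\otimes_\iota\Omega C$ by the skeletal filtration $C_{\leq n}\otimes\Omega C$. The standard extra-degeneracy argument shows that $B(R,\Omega C,\Omega C)$ is a resolution of $R$, and the classical \emph{acyclic cobar theorem} (the connected version of the statement appearing as Proposition~2.2.4 of \cite{LV12}) asserts that $C\otimes_\iota\Omega C$ is also a resolution of $R$. Both being semi-free resolutions of $R$ as right dg $\Omega C$-modules, the chain map $\phi_0$ between them is automatically a quasi-isomorphism. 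Finally, the functor $-\otimes_{\Omega C} M$ preserves quasi-isomorphisms of semi-free right dg $\Omega C$-modules (the dg analogue of flatness), so $\phi=\phi_0\otimes_{\Omega C}\mathrm{id}_M$ is a quasi-isomorphism, completing the proof.

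The main obstacle is the acyclic cobar theorem for connected $C$, i.e.\ that $C\otimes_\iota\Omega C$ is a resolution of $R$; although classical, it genuinely requires a spectral-sequence argument associated to the $C$-skeletal filtration, whose $E^1$-page is contractible via an extra degeneracy. Everything else is either the universal property of semi-free resolutions or a routine computation with bar-cobar signs.
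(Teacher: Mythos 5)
Your overall strategy---reduce to $M=\Omega C$, recognize both $B(R,\Omega C,\Omega C)$ and $C\otimes_\iota\Omega C$ as semi-free right dg $\Omega C$-module resolutions of $R$, deduce that a comparison map is automatically a quasi-isomorphism, and descend by $-\otimes_{\Omega C}M$---is a sound and standard alternative to the paper's argument. But the explicit $\phi_0$ you write down is not a chain map, and the defect is already at bar length one, not on longer bar lengths. Consider any $c_1,c_2\in\overline C_1$ and the bar-length-one element $[\{c_1|c_2\}]\otimes 1$. Your $\phi_0$ kills it, since $\{c_1|c_2\}\in(s^{-1}\overline C)^{\otimes 2}$ lies in the kernel of the projection $p$; hence $\partial^\iota\phi_0([\{c_1|c_2\}]\otimes 1)=0$. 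But the bar differential of $[\{c_1|c_2\}]\otimes 1$ contains the face-map term that feeds $a_1=\{c_1|c_2\}$ into the module slot, producing $\pm\{c_1|c_2\}$ in bar length zero, and $\phi_0$ sends this to $\pm 1_C\otimes\{c_1|c_2\}\neq 0$, a term nothing else in the image can cancel. Linear projection on $\overline{\Omega C}$ is the wrong operation: one must split off the \emph{first} cobar letter into $C$ and push the remaining tail into the module component, i.e.\ send $[\{c_1|\cdots|c_k\}]\otimes m$ to $c_1\otimes\{c_2|\cdots|c_k\}\cdot m$, which is exactly the paper's formula. This map is still right $\Omega C$-linear, but for $k>1$ it does \emph{not} take the distinguished $\Omega C$-basis elements of the bar construction to the distinguished basis $C\otimes 1$ of $C\otimes\Omega C$; the tail $\{c_2|\cdots|c_k\}$ in the module slot is precisely what the twisting term of $\partial^\iota$ pairs against to reproduce the module-action face of the bar differential.

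With the corrected formula (or by invoking the abstract existence of a comparison map between semi-free resolutions, at the cost of a natural pointwise formula), your argument closes, and it does cleanly isolate the two nontrivial ingredients: the acyclicity of $C\otimes_\iota\Omega C$ and the $K$-flatness of semi-free modules. The paper takes a more elementary and self-contained route: it exhibits a right inverse $\rho_C\otimes id_M$ for $\phi$ (the bar-cobar unit $\rho_C\colon C\to B\Omega C$ tensored with $M$) and contracts $\ker\phi$ by an explicit degreewise-nilpotent homotopy, avoiding both the theory of semi-free replacement and the acyclic cobar theorem, and yielding naturality of $\phi$ directly from its formula.
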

\begin{proof}
Define $\phi:  B(R, \Omega( C ), M) \to C \otimes_{\iota} M$ by setting
$\phi( [a_1 | ... |a_n]  \otimes m) =0$ if $n>1$, $\phi=id$ if $n=0$, and if $n=1$ with $a_1=\{c_1| ... |c_k\}$ let
\[
\phi( [ \{c_1 | ... |c_k\} ] \otimes m)=\left\{
  \begin{array}{lll}
    c_1 \otimes m                                        &  k=1,\\
  c_1\otimes \{c_2|...|c_k\}\cdot m, & k>1;
  \end{array}
\right.
\]
It is straightforward to check $\phi$ is a chain map. Moreover, $\phi$ is surjective with right inverse given by the chain map $$ \rho_C \otimes id_M:C \otimes_{\iota} M \to  B(R,\Omega( C ), M),$$ where $\rho_C: C \to B\Omega(C)=B(R, \Omega(C), R)$ is the dg coalgebra map defined by
\begin{equation}
\rho_C(c)= [ \{c \} ]+ \sum_{(c)} [ \{c'\}  | \{c''\} ] + \sum_{(c)} [ \{c' \}| \{c'' \}| \{c'''\} ] + ... \text{  , }
\end{equation}
and the number of prime subscripts denotes the number of iterated applications of $\Delta: C \to C \otimes C$; this notation is unambiguous since $C$ is coassociative. Note that $\rho_C$ is well defined since $C$ is connected and thus conilpotent. 

We argue that $(\text{ker } \phi, d_{\text{bar}})$ is an acyclic sub-complex in order to conclude that $\phi$ is a quasi-isomorphism. In fact, define $h: \text{ker } \phi \to \text{ker } \phi$ on any $ [a_1 | ... |a_n ] \otimes m \in \text{ker } \phi$ with $a_n=\{c_1 | ... |c_k\} \in \Omega(C)$ by 

\[
h( [ a_1|a_2|...|a_{n-1}| \{c_1|...|c_k\}] \otimes m)=\left\{
  \begin{array}{lll}
    0,                                              &  k=1,\\
                              \text{$  [a_1|a_2|...|a_{n-1}| \{c_1\} | \{c_2|...|c_k\}] $}      \otimes m & k>1;
  \end{array}
\right.
\]
A computation yields that, since $C$ is conilpotent, for any $x \in \text{ker } \phi$ there exists a non-negative integer $n_x$ such that $(d_{\text{bar}} \circ h + h \circ d_{\text{bar}} - id)^{n_x}=0$. This last equation implies that if $x \in \text{ker } \phi$ is a cycle then there exists some $y$ such that $x=d_{\text{bar}}(y)$, as desired. 
\end{proof}
We adapt the argument from Proposition 2.2.4 of \cite{LV12} to prove the bar construction is invariant under quasi-isomorphisms in the following sense. 
\begin{proposition}\label{invarianceofbar}
If $f:A\to A'$ is a quasi-isomorphism of dg augmented $R$-flat algebras and $M$ is a dg $A'$-module then
$$B(id_{R},f, id_{M}): B(R,A, f^*M) \to  B(R, A', M)$$
is a quasi-isomorphism of chain complexes. 
\end{proposition}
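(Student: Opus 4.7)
The natural approach is a spectral sequence argument using the bar length filtration, along the lines of Loday-Vallette Proposition 2.2.4, with the key technical input being $R$-flatness so that the Künneth theorem applies. First I would define an increasing filtration $\{F_p\}_{p \geq 0}$ on $B(R, A, M)$ where $F_p$ is the $R$-submodule spanned by tensors $[a_1 | \cdots | a_k] \otimes m$ with $k \leq p$. Inspecting the explicit formula for $d_{\text{bar}}$, the differential splits as a sum of an internal part (involving $d_A$ on each bar and $d_M$ on $m$) which preserves $k$, and a multiplicative part (involving the product in $A$ and the action on $M$) which strictly decreases $k$ by one. Hence each $F_p$ is a subcomplex, the filtration is exhaustive with $F_{-1} = 0$, and $B(\mathrm{id}_R, f, \mathrm{id}_M)$ is a filtered chain map.

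On the associated graded piece $F_p/F_{p-1} \cong (s\overline{A})^{\otimes p} \otimes M$, only the internal differential survives, and the result is the ordinary tensor product chain complex. Since $A$ is $R$-flat, the tensor powers $(s\overline{A})^{\otimes p}$ are $R$-flat as well, so the Künneth theorem applies to yield
$$E^1_{p,*} \cong H_*(s\overline{A})^{\otimes p} \otimes H_*(M),$$
with the analogous identification for $B(R, A', M)$. Since $f$ is a quasi-isomorphism, the induced map $H_*(\overline{A}) \to H_*(\overline{A'})$ is an isomorphism, and consequently $B(\mathrm{id}_R, f, \mathrm{id}_M)$ induces an isomorphism on the $E^1$-pages. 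Applying the classical comparison theorem for spectral sequences, once convergence is verified, produces the desired quasi-isomorphism on total complexes.

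A cleaner alternative that sidesteps the convergence subtleties is a direct induction. One has $F_0 \cong M$, so the base case is trivial. For the inductive step, apply the five-lemma to the induced map of long exact sequences in homology arising from the short exact sequences $0 \to F_{p-1} \to F_p \to F_p/F_{p-1} \to 0$; the quasi-isomorphism on the quotient is precisely the Künneth computation above. Finally, pass to the colimit: since $B(R, A, M) = \mathrm{colim}_p F_p$ and filtered colimits commute with taking homology, $B(\mathrm{id}_R, f, \mathrm{id}_M)$ is a quasi-isomorphism in the limit.

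The main obstacle is the role of $R$-flatness. It is essential both for the Künneth identification of $E^1$ and for the very fact that the bar construction preserves quasi-isomorphisms over a general commutative ring (as noted in the proof of Proposition \ref{cobarstrong}); the conclusion fails without it. Beyond this, the only technical point in the spectral sequence version is convergence, which is handled by the bounded-below exhaustive nature of the filtration, and is bypassed entirely in the inductive presentation.
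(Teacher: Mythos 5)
Your proposal matches the paper's proof essentially verbatim: both filter by bar length, observe that the associated graded is the tensor-product complex $(s\overline{A})^{\otimes p}\otimes M$, and conclude via K\"unneth and $R$-flatness that the induced map on associated graded (and hence on the whole complex, by convergence of the spectral sequence) is a quasi-isomorphism. One small caveat: the explicit identification $E^1_{p,*}\cong H_*(s\overline{A})^{\otimes p}\otimes H_*(M)$ is slightly stronger than flatness alone gives (the K\"unneth spectral sequence may contribute Tor terms), but this is harmless since all you actually need---and all the paper claims---is that the filtered map is a quasi-isomorphism on associated graded, which follows from flatness of $s\overline{A}$ and $s\overline{A'}$ without computing $E^1$ in closed form.
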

\begin{proof}
Consider the filtration defined by  $$F_p(B(R, A, f^*M))= \{ [a_1|...|a_n] \otimes m: n\leq p \}$$ and define $F_p(B(R, A', M))$ similarly. These are increasing, bounded below, and exhaustive filtrations of chain complexes so they yield convergent spectral sequences. The desired result follows by noting that $B(id_{R},f, id_{M})$ induces a chain map on the associated quotients
$$F_p(B(R, A, f^*M)) / F_{p-1}(B(R, A, f^*M)) \to F_p(B(R, A', M)) / F_{p-1}(B(R, A', M)),$$
which is a quasi-isomorphism by K\"unneth's theorem, since we assumed that $A$ and $A'$ are $R$-flat.
\end{proof} \text{ }
\\
\textit{Proof of Theorem \ref{invarianceofbrown}.} Let $g: C \to C'$ be an $\Omega$-quasi-isomorphism between connected dg coassociative $R$-flat coalgebras. Then $\Omega(g):\Omega C \to \Omega C'$ is a quasi-isomorphism of dg associative $R$-flat algebras. Consider the following commutative square
\[
\xymatrix{
B(R, \Omega( C ),  \Omega(g)^*M) \ar[r]^-{\phi \otimes id} \ar[d]_{B(id_{R}, \Omega(g), id_{M})}& C \otimes_{\iota} \Omega(g)^*M \ar[d]^{g \otimes id} \\
B(R, \Omega( C' ), M)  \ar[r]^-{\phi \otimes id} & C' \otimes_{\iota}M .
}
\]
The horizontal maps are quasi-isomorphisms by Proposition \ref{bar complex}. The left vertical map is a quasi-isomorphism by Proposition \ref{invarianceofbar}. Hence, it follows that the right vertical map is a quasi-isomorphism as well by the $2$ out of $3$ property.  \hfill $\square$

\section{Simplicial coalgebras and localization}
In this section we recall some results from \cite{G95} relating simplicial coalgebras and Bousfield localization and then discuss the notion of a $\pi_1$-$R$-\textit{equivalence} between reduced Kan complexes.

Let $\textbf{sSet}$ denote the category of simplicial sets. We say $S \in \textbf{sSet}$ is a $0$-\textit{reduced} simplicial set if $S$ has a single vertex, i.e. if the set $S_0$ is a singleton. Denote by $\textbf{sSet}^0 \subset \textbf{sSet}$ the full sub-category consisting of reduced simplicial sets.  

\begin{definition} Let $X, Y \in \textbf{sSet}$. A map $f: X \to Y$ is an $R$-\textit{equivalence} if $H_*(f;R): H_*(X;R) \to H_*(Y;R)$ is an isomorphism. We say that $X$ and $Y$ are $R$-\textit{equivalent} if there is a zig-zag of $R$-equivalences in $\mathbf{sSet}$ connecting $X$ and $Y$. 
\end{definition}

Any (Kan) weak homotopy equivalence of simplicial sets is an $R$-equivalence, but not vice-versa. Bousfield constructed in \cite{B75} a model category structure on $\textbf{sSet}$ whose weak equivalences are the $R$-equivalences and cofibrations are the same as those in Quillen's model structure on $\mathbf{sSet}$ (the level-wise injections). A fibrant replacement $X \to L_{R}X$ in Bousfield's model structure on $\textbf{sSet}$ yields a model for the \textit{$R$-localization} of $X \in \mathbf{sSet}$.

For any $X \in \textbf{sSet}$ denote by $R X \in \textbf{scCoalg}_{R}$ the \textit{simplicial cocommutative $R$-coalgebra of chains in $X$}, namely, each $(R X)_n := R[X_n]$ is the free $R$-module generated by $X_n$, the face and degeneracy maps are induced by those in $X$, and each coproduct $$\Delta_n: (RX)_n \to (RX)_n\otimes (RX)_n$$ is defined on basis elements $x \in X_n$ by $$\Delta_n(x)= x \otimes x.$$ Note that the coproduct is induced by the diagonal map $X \to X \times X$. The counit is induced by the map $X \to \Delta^0$. This construction defines a functor 
$$R: \textbf{sSet} \to \textbf{scCoalg}_{R}.$$ The functor $R$ has a right adjoint $$\mathcal{P}: \textbf{scCoalg}_{R} \to \textbf{sSet},$$ called the \textit{functor of points}, whose $n$-simplices are given by $$(\mathcal{P}(C))_n= \text{Hom}_{\textbf{Coalg}_{R}}(R, C_n).$$

When $R$ is a field, which we denote by $\mathbb{F}$, Goerss constructed in \cite{G95} a cofibrantly generated model category structure on $\textbf{scCoalg}_{\mathbb{F}}$ with weak equivalences given by quasi-isomorphisms of simplicial cocommutative $\mathbb{F}$-coalgebras (as defined in Section 2.3) and cofibrations given by level-wise inclusions. Raptis extended this model category structure to simplicial cocommutative $R$-coalgebras over an arbitrary unital commutative ring $R$ \cite{R13}. 

The adjunction $(\mathbb{F}, \mathcal{P})$ becomes a Quillen adjunction when $\textbf{sSet}$ is equipped with the model category structure constructed by Bousfield in \cite{B75}. For any $X \in \mathbf{sSet}$, the derived unit $\eta: X \to \mathbf{R}\mathcal{P}(\mathbb{F}X)$, where $\mathbf{R}\mathcal{P}$ denotes the total derived functor of $\mathcal{P}$, gives a canonical map from $X$ to a fibrant object in Bousfield's model category (i.e. an ``$\mathbb{F}$-local space"). Furthermore, using that the category of sets is an idempotent retract of the category of coalgebras over a fixed algebraically closed field, Goerss proves the following theorem.

\begin{theorem} (Theorem C in \cite{G95}) 
If $\mathbb{E}$ is an algebraically closed field, then for any $X \in \textbf{sSet}$, the derived unit  $$\eta: X \to \mathbf{R}\mathcal{P}(\mathbb{E}X)$$ is the Bousfield localization of $X$. 
\end{theorem}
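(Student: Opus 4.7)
The goal is to verify the two defining properties of a Bousfield $\mathbb{E}$-localization: (i) the derived unit $\eta$ is an $\mathbb{E}$-equivalence, and (ii) the target $\mathbf{R}\mathcal{P}(\mathbb{E}X)$ is $\mathbb{E}$-local. The plan is to exploit the Quillen adjunction $(\mathbb{E}, \mathcal{P})$ between Bousfield's $\mathbb{E}$-local model structure on $\textbf{sSet}$ and Goerss' model structure on $\textbf{scCoalg}_{\mathbb{E}}$, together with the special features of cocommutative coalgebras over an algebraically closed field.

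Property (ii) I would handle first, and essentially formally. For any $\mathbb{E}$-equivalence $f: A \to B$ in $\textbf{sSet}$, the functor $\mathbb{E}$ sends $f$ to a quasi-isomorphism by the definition of weak equivalences in $\textbf{scCoalg}_{\mathbb{E}}$, and to a map between cofibrant objects since cofibrations in Goerss' structure are the level-wise injections. The derived adjunction therefore produces a weak equivalence on mapping spaces $f^*: \mathrm{Map}(B, \mathbf{R}\mathcal{P}(\mathbb{E}X)) \to \mathrm{Map}(A, \mathbf{R}\mathcal{P}(\mathbb{E}X))$, which is exactly the lifting condition characterizing fibrancy (i.e.\ $\mathbb{E}$-locality) in Bousfield's structure.

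For (i), the triangle identity of the adjunction gives that $\mathbb{E}X \xrightarrow{\mathbb{E}\eta} \mathbb{E}\mathbf{R}\mathcal{P}(\mathbb{E}X) \xrightarrow{\epsilon} \mathbb{E}X$ is homotopic to the identity, so $\mathbb{E}\eta$ is a homotopy section and in particular induces a split injection on homology. To upgrade this to a quasi-isomorphism I would invoke the classical structure theorem for cocommutative coalgebras over an algebraically closed field: every such coalgebra decomposes as a direct sum, indexed by its set of group-like elements, of pointed irreducible subcoalgebras. Concretely, the functor $\mathbb{E}[-]: \textbf{Set} \to \textbf{Coalg}_{\mathbb{E}}$ is fully faithful with retraction given by the group-like elements functor $\mathcal{P}$, and the resulting comonad on $\textbf{Coalg}_{\mathbb{E}}$ is idempotent on the essential image of $\mathbb{E}[-]$. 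Lifted to the simplicial and homotopical setting, this should force the counit $\epsilon_{\mathbb{E}X}$ to be a quasi-isomorphism after fibrant replacement, and combined with the triangle identity then gives (i).

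The principal obstacle is controlling fibrant replacement in $\textbf{scCoalg}_{\mathbb{E}}$ at the level of normalized chains. A fibrant replacement of $\mathbb{E}X$ could in principle acquire irreducible components that do not come from group-like elements, and one must rule out that these contribute nontrivially after passing through $\mathcal{P}$ and then back through $\mathbb{E}[-]$. The genuinely non-formal content of the argument, and where algebraic closure of $\mathbb{E}$ is used essentially, is showing that $\mathcal{P}$ captures the entire $\mathbb{E}$-homological content of a fibrant simplicial cocommutative $\mathbb{E}$-coalgebra: over a non-closed field one can exhibit simple cocommutative coalgebras with no group-like elements at all (for instance $\mathbb{C}$ viewed as an $\mathbb{R}$-coalgebra by dualization), which obstructs the entire argument in that generality. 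Once this structural step is established, combining it with the split injection produced by the triangle identity completes the proof.
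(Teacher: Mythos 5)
This statement is cited from Goerss \cite{G95} (Theorem C there) and the paper offers no proof of it; the one-sentence remark preceding it — ``using that the category of sets is an idempotent retract of the category of coalgebras over a fixed algebraically closed field'' — is the only hint the paper gives as to the argument, and it matches the strategy you outline.

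Your treatment of property (ii), $\mathbb{E}$-locality of $\mathbf{R}\mathcal{P}(\mathbb{E}X)$, is correct and is indeed formal: in a Quillen adjunction the right adjoint preserves fibrant objects, and fibrant objects of Bousfield's model structure are exactly the $\mathbb{E}$-local spaces. The triangle-identity step for (i) is also framed correctly: it reduces the problem to showing that the derived counit $\epsilon: \mathbb{E}\mathbf{R}\mathcal{P}(\mathbb{E}X) \to \mathcal{R}(\mathbb{E}X)$ (equivalently, that $\mathcal{P}$ detects the full homology of a fibrant simplicial cocommutative $\mathbb{E}$-coalgebra) is a quasi-isomorphism. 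However, this is precisely where your proposal stops. You name the needed lemma, explain why algebraic closure is essential for it (your dualization example $\mathbb{C}^*$ over $\mathbb{R}$ is a correct illustration of the failure mode), and then write ``Once this structural step is established... completes the proof.'' That step is the entire non-formal content of Goerss's theorem: one must show that the decomposition of cocommutative $\mathbb{E}$-coalgebras into pointed irreducible summands indexed by group-likes is compatible with fibrant replacement in the simplicial/homotopical setting, so that no homology escapes the image of $\mathbb{E}[-]$. Without an argument for this (Goerss establishes it through an analysis of the coradical filtration and the interaction of the model structure with the pointed irreducible decomposition), what you have is a correct reduction, not a proof.
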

Using that any field extension $\mathbb{F} \subseteq \mathbb{E}$ induces a weak homotopy equivalence $$L_{\mathbb{F}}X \xrightarrow{\simeq} L_{\mathbb{E}}X$$ between Bousfield localizations, Goerss obtained, as a consequence of the previous theorem, that the simplicial cocommutative coalgebra of chains over \textit{any} field classifies spaces up to Bousfield localization in the following sense.

\begin{theorem} (Theorem D in \cite{G95}) Let $\mathbb{F}$ be any field and $X, Y \in \mathbf{sSet}$. The simplicial cocommutative coalgebras of chains $\mathbb{F}X$ and $\mathbb{F}Y$ are quasi-isomorphic if and only if $X$ and $Y$ are $\mathbb{F}$-equivalent. 
\end{theorem}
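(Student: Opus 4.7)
The plan is to prove the two implications separately, with the passage from coalgebras to spaces being the substantive direction that requires Theorem~C as input.

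For the direction that $\mathbb{F}$-equivalence of $X$ and $Y$ implies quasi-isomorphism of $\mathbb{F}X$ and $\mathbb{F}Y$, I would apply the functor $\mathbb{F}(-)\colon \textbf{sSet} \to \textbf{scCoalg}_{\mathbb{F}}$ termwise to the given zig-zag of $\mathbb{F}$-equivalences. The key observation is that the normalized chains $N_*(\mathbb{F}Z)$ of the simplicial cocommutative coalgebra $\mathbb{F}Z$ coincide with the usual normalized singular chain complex of $Z$ with $\mathbb{F}$-coefficients. Hence for any $\mathbb{F}$-equivalence $f\colon Z \to Z'$, the induced map $N_*(\mathbb{F}f)$ is a quasi-isomorphism of chain complexes, which by Definition~2.3.1(b) means that $\mathbb{F}f$ is a quasi-isomorphism of simplicial coalgebras, producing the desired zig-zag in $\textbf{scCoalg}_{\mathbb{F}}$.

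For the converse direction, the plan is to reduce to Theorem~C by passing to an algebraic closure $\mathbb{E}$ of $\mathbb{F}$. Applying the base-change functor $\mathbb{E} \otimes_{\mathbb{F}} (-)$ termwise to the given zig-zag of quasi-isomorphisms in $\textbf{scCoalg}_{\mathbb{F}}$ produces, by flatness of $\mathbb{E}$ over $\mathbb{F}$, a zig-zag of quasi-isomorphisms in $\textbf{scCoalg}_{\mathbb{E}}$ between $\mathbb{E}X$ and $\mathbb{E}Y$. Since these are weak equivalences in Goerss's model structure, the right derived functor $\mathbf{R}\mathcal{P}$ of the Quillen adjunction $(\mathbb{E},\mathcal{P})$ yields a zig-zag of weak equivalences in $\textbf{sSet}$ between $\mathbf{R}\mathcal{P}(\mathbb{E}X)$ and $\mathbf{R}\mathcal{P}(\mathbb{E}Y)$. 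Now prepend and append the derived unit maps $X \to \mathbf{R}\mathcal{P}(\mathbb{E}X)$ and $Y \to \mathbf{R}\mathcal{P}(\mathbb{E}Y)$; by Theorem~C these are Bousfield $\mathbb{E}$-localizations, and hence in particular $\mathbb{E}$-equivalences. The resulting extended zig-zag exhibits $X$ and $Y$ as $\mathbb{E}$-equivalent. Finally, I would conclude by the standard observation that for any field extension $\mathbb{F} \subseteq \mathbb{E}$ the identity $H_*(Z;\mathbb{E}) \cong H_*(Z;\mathbb{F}) \otimes_{\mathbb{F}} \mathbb{E}$, combined with faithful flatness of $\mathbb{E}$ over $\mathbb{F}$, shows that a map of simplicial sets is an $\mathbb{E}$-equivalence if and only if it is an $\mathbb{F}$-equivalence. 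Hence $X$ and $Y$ are $\mathbb{F}$-equivalent, as desired.

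The main obstacle lies not in the above assembly but in the input Theorem~C itself, whose proof rests on the subtle categorical fact that the category of sets sits as an idempotent retract inside the category of coalgebras over an algebraically closed field, allowing the reconstruction of a simplicial set from its simplicial coalgebra of chains up to Bousfield localization. Once Theorem~C is granted, the passage from $\mathbb{E}$ back to $\mathbb{F}$ is purely formal via the field-independence of Bousfield localization, and the forward direction is essentially a definition-chase through the normalized chains functor.
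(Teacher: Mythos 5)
The paper does not actually prove this statement: it is quoted as Theorem D from \cite{G95}, and the text merely gives a one-sentence indication of Goerss's argument, namely that one reduces to Theorem C over the algebraic closure $\mathbb{E}$ and then uses that field extensions induce weak equivalences between Bousfield localizations. Your reconstruction is correct and follows exactly this route: the forward implication is the observation that $N_*(\mathbb{F}Z)$ is the usual normalized chain complex of $Z$ with $\mathbb{F}$-coefficients, so $\mathbb{F}(-)$ carries a zig-zag of $\mathbb{F}$-equivalences to a zig-zag of quasi-isomorphisms; and the converse is obtained by base-changing to $\mathbb{E}$, invoking Theorem C to identify the derived units with Bousfield $\mathbb{E}$-localizations, and then descending from $\mathbb{E}$-equivalence to $\mathbb{F}$-equivalence via $H_*(-;\mathbb{E}) \cong H_*(-;\mathbb{F})\otimes_{\mathbb{F}}\mathbb{E}$ and faithful flatness. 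This last step is precisely the content of the paper's remark that $L_{\mathbb{F}}X \to L_{\mathbb{E}}X$ is a weak homotopy equivalence, so your argument matches the intended proof.
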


One of the main goals of this article is to relate the notion of $\Omega$-quasi-isomorphism between simplicial cocommutative coalgebras to the following notion.

\begin{definition} Let $X, Y \in \textbf{sSet}^0$ be two reduced Kan complexes. A map $f: X \to Y$ is a \textit{$\pi_1$-$R$-equivalence} if it induces an isomorphism $$\pi_1(f): \pi_1(X) \xrightarrow{\cong} \pi_1(Y)$$ between fundamental groups and the induced map at the level of universal covers $$\widetilde{f}: \widetilde{X} \to \widetilde{Y}$$ is an $R$-equivalence. We say $X$ and $Y$ are \textit{$\pi_1$-$R$-equivalent} if there is a zig-zag of $\pi_1$-$R$-equivalences of reduced Kan complexes connecting $X$ and $Y$. 
\end{definition}

The following is analogous to Proposition \ref{cobarstrong}. 

\begin{proposition} Any $\pi_1$-$R$-equivalence between reduced Kan complexes is an $R$-equivalence but not vice-versa. 
\end{proposition}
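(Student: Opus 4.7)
The proposition splits into two independent claims: (a) every $\pi_1$-$R$-equivalence of reduced Kan complexes is an $R$-equivalence, and (b) there exist $R$-equivalences between reduced Kan complexes that are not $\pi_1$-$R$-equivalences. I would treat them separately.

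For (a), given a $\pi_1$-$R$-equivalence $f:X \to Y$, I would choose compatible basepoints and lift $f$ to a map $\widetilde{f}:\widetilde{X} \to \widetilde{Y}$ equivariant for the deck action of $\pi_1(X)$ on the source and of $\pi_1(X) \cong \pi_1(Y)$ on the target (via the isomorphism $\pi_1(f)$). Because the deck action of $\pi_1(X)$ on $\widetilde{X}$ is free on simplices, the normalized chain complex $N_*(R\widetilde{X})$ is a complex of free $R[\pi_1(X)]$-modules, with a basis indexed by chosen lifts of non-degenerate simplices of $X$; the analogous statement holds for $Y$. One then has a canonical identification
\[
N_*(RX) \;\cong\; R \otimes_{R[\pi_1(X)]} N_*(R\widetilde{X}),
\]
and likewise for $Y$. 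By hypothesis $\widetilde{f}_*$ is an equivariant quasi-isomorphism between complexes of free $R[\pi_1(X)]$-modules, so tensoring over $R[\pi_1(X)]$ with the trivial module $R$ preserves the quasi-isomorphism, yielding the desired isomorphism $H_*(X;R) \cong H_*(Y;R)$. A more leisurely packaging of the same argument is the Cartan--Leray spectral sequence
\[
E^2_{p,q} \;=\; H_p\bigl(\pi_1(X);\, H_q(\widetilde{X};R)\bigr) \;\Longrightarrow\; H_{p+q}(X;R),
\]
which $f$ maps isomorphically onto the analogous spectral sequence for $Y$ on the $E^2$-page, and hence on the abutment.

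For (b), I would exhibit a single counterexample via an acyclic group. Let $\pi$ be a non-trivial group with $\widetilde{H}_*(B\pi;\mathbb{Z}) = 0$; such groups exist, for instance Higman's four-generator acyclic group, or the acyclic groups constructed by Baumslag--Dyer--Heller. A reduced Kan model of $B\pi$, say $\overline{W}\pi$, then has trivial reduced homology with coefficients in any commutative ring $R$ by the universal coefficient theorem, so the canonical map $B\pi \to \ast$ is an $R$-equivalence. However, its induced map on $\pi_1$ is the non-trivial homomorphism $\pi \to 1$, which is not an isomorphism, so it fails to be a $\pi_1$-$R$-equivalence.

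The only non-formal content is in direction (a), and the essential input is the fact that the deck action on a universal cover is free on non-degenerate simplices, making $N_*(R\widetilde{X})$ a free $R[\pi_1(X)]$-module complex. Since this holds without any finiteness or nilpotency hypothesis on $\pi_1(X)$, I expect no serious obstacle.
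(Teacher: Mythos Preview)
Your proof is correct and follows essentially the same route as the paper for part (a): both use that $N_*(R\widetilde{X})$ is free over $R[\pi_1]$ and then identify $N_*(RX)$ with the (derived) tensor product $N_*(R\widetilde{X}) \otimes_{R[\pi_1]} R$, so that the equivariant quasi-isomorphism on covers descends. For part (b) the paper simply observes that an $R$-equivalence need not induce an isomorphism on $\pi_1$, whereas you supply an explicit witness via an acyclic group; this is a helpful elaboration rather than a genuinely different argument.
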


\begin{proof} Suppose $f: X \to Y$ is a $\pi_1$-$R$-equivalence between Kan complexes, so that $\pi_1(f): \pi_1(X) \cong \pi_1(Y):= \pi_1$ is an isomorphism and $$C_*(\widetilde{f}; R): C_*(\widetilde{X}; R) \to C_*(\widetilde{Y};R)$$ is a quasi-isomorphism of chain complexes, where, for any simplicial set $S$, we denote by $C_*(S; R)= N_*(R S)$, the normalized simplicial chains on $S$ with coefficients in $R$. Let $R[\pi_1]$ be the group algebra on $\pi_1$ and consider $R$ as a left $R[\pi_1]$-module through the natural augmentation $R[\pi_1] \to R$. We have a natural isomorphism of chain complexes $$C_*(X;R) \cong C_*(\widetilde{X}; R) \otimes_{R[\pi_1]} R$$ and similarly for $Y$. But $C_*(\widetilde{X};R)$ is a free $R[\pi_1]$-module, which implies $C_*(\widetilde{X}; R) \otimes_{R[\pi_1]} R$ is a model for the derived tensor product of $R[\pi_1]$-modules, so $C_*(f;R) = C_*(\widetilde{f};R) \otimes_{R[\pi_1]} id_{R}$ is a quasi-isomorphism. Clearly, the converse is not true since an $R$-equivalence does not necessarily induce an isomorphism on fundamental groups. 
\end{proof}

 Let $\mathbb{E}$ be an algebraically closed field and let $\mathcal{R}: \textbf{scCoalg}_{\mathbb{E}} \to \textbf{scCoalg}_{\mathbb{E}}$ be a fibrant replacement functor in Goerss' model category structure on  $\textbf{scCoalg}_{\mathbb{E}}$ so that $$X \to (\mathcal{P} \circ \mathcal{R}) (\mathbb{E}X)$$ is a functorial model for the Bousfield $\mathbb{E}$-localization of $X$. 

 For any Kan complex $X \in  \textbf{sSet}^0$  with universal cover $\widetilde{X}$, we have a natural fibration 
\begin{equation}\label{borelfibration}
(\mathcal{P} \circ \mathcal{R})( \mathbb{E}\widetilde{X}) \to E\pi_1(X) \times_{\pi_1(X)} (\mathcal{P} \circ \mathcal{R})( \mathbb{E}\widetilde{X})  \to B\pi_1(X),
\end{equation}
where $E\pi_1(X) \to B\pi_1(X)$ is a functorial model for the universal bundle of the group $\pi_1(X)$. In other words,  \ref{borelfibration} is the Borel fibration associated to the $\pi_1(X)$ action on $(\mathcal{P} \circ \mathcal{R})( \mathbb{E}\widetilde{X})$. The fibration \ref{borelfibration} is the \textit{fiberwise $\mathbb{E}$-localization} of the fibration 
\begin{equation}
\widetilde{X} \to  E\pi_1(X) \times_{\pi_1(X)} \widetilde{X} \to B\pi_1(X),
\end{equation} whose homotopy class classifies the $\pi_1(X)$-space $\widetilde{X}$. For simplicity, we denote \ref{borelfibration} by
\begin{equation}
L_{\mathbb{E}}\widetilde{X} \to E_{\mathbb{E}}(\widetilde{X}) \to B\pi_1(X)
\end{equation}

The following proposition is now straightforward.
\begin{proposition} \label{fiberwiseequivalence} Let $\mathbb{F}$ be a field with algebraic closure $\mathbb{E}$. A map $f: X \to Y$ between reduced Kan complexes is a  \textit{$\pi_1$-$\mathbb{F}$-equivalence} if and only if $\pi_1(f): \pi_1(X) \xrightarrow{\cong} \pi_1(Y)$ is an isomorphism and $f$ induces a commutative diagram

\[
\xymatrix{
L_{\mathbb{E}}\widetilde{X} \ar[r] \ar[d]_{\simeq} & E_{\mathbb{E}}(\widetilde{X}) \ar[d]^{\simeq} \ar[r] &  B\pi_1(X)\ar[d]^{\cong } \\
 L_{\mathbb{E}}\widetilde{Y}\ar[r]  & E_{\mathbb{E}}(\widetilde{Y}) \ar[r]  & B\pi_1(Y),
}
\]
where the first two vertical arrows are weak homotopy equivalences. 
\end{proposition}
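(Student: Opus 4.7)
The proposition is a direct consequence of two facts: (i) the naturality of the fiberwise Bousfield $\mathbb{E}$-localization applied to the Borel construction, and (ii) the equivalence between the notions of $\mathbb{F}$-equivalence and $\mathbb{E}$-equivalence for maps of spaces. I would prove both directions of the ``if and only if'' simultaneously by showing that the condition that the left and middle vertical arrows are weak equivalences is equivalent to $\widetilde{f}$ being an $\mathbb{F}$-equivalence, given that $\pi_1(f)$ is an isomorphism.

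First, assuming $\pi_1(f): \pi_1(X) \xrightarrow{\cong} \pi_1(Y)$ is an isomorphism, I would identify the two groups and apply the fiberwise $\mathbb{E}$-localization functor to the natural map between the classifying fibrations
\[
\widetilde{X} \to E\pi_1(X) \times_{\pi_1(X)} \widetilde{X} \to B\pi_1(X)
\]
and the analogous one for $Y$, producing the commutative diagram in the statement. The right vertical arrow is automatically an isomorphism. Applying the five lemma to the long exact sequences of homotopy groups of the two Borel fibrations, one sees that the middle arrow is a weak homotopy equivalence if and only if the left arrow is. So the condition ``the first two vertical arrows are weak equivalences'' reduces to ``the left vertical arrow $L_{\mathbb{E}}\widetilde{f}$ is a weak equivalence.''

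Next, I would translate between $L_{\mathbb{E}}\widetilde{f}$ being a weak equivalence and $\widetilde{f}$ being an $\mathbb{F}$-equivalence. By the defining universal property of Bousfield localization, $L_{\mathbb{E}}\widetilde{f}$ is a weak equivalence precisely when $\widetilde{f}$ is an $\mathbb{E}$-equivalence. Using the natural weak equivalence $L_{\mathbb{F}}Z \xrightarrow{\simeq} L_{\mathbb{E}}Z$ induced by the field extension $\mathbb{F} \subseteq \mathbb{E}$ (recalled in the excerpt just before Goerss' Theorem D), the notions of $\mathbb{F}$-equivalence and $\mathbb{E}$-equivalence coincide for maps of simplicial sets. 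Equivalently, this follows from the universal coefficient isomorphism $H_*(-;\mathbb{E}) \cong H_*(-;\mathbb{F}) \otimes_{\mathbb{F}} \mathbb{E}$ together with faithful flatness of $\mathbb{E}$ over $\mathbb{F}$.

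Combining these two observations gives the desired equivalence. The only subtle point, which is really a matter of setup rather than a genuine obstacle, is ensuring that the fiberwise $\mathbb{E}$-localization can be made functorial so that $f$ induces an honest commutative diagram of fibrations; this is handled by applying the functor $\mathcal{P} \circ \mathcal{R} \circ \mathbb{E}(-)$ fiberwise to the Borel construction, exactly as in the definition of the fiberwise localization given just before the statement. Everything else is a routine application of the five lemma and the universal property of $L_{\mathbb{E}}$.
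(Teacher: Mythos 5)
Your argument is correct and it fills in precisely the details that the paper omits: the paper declares this proposition ``now straightforward'' after setting up the Borel construction and fiberwise $\mathbb{E}$-localization, and gives no explicit proof. Your decomposition — five lemma on the two Borel fibrations to reduce the diagram condition to ``$L_{\mathbb{E}}\widetilde{f}$ is a weak equivalence,'' then the universal property of Bousfield localization to translate this to ``$\widetilde{f}$ is an $\mathbb{E}$-equivalence,'' then faithful flatness of $\mathbb{E}$ over $\mathbb{F}$ (equivalently, the weak equivalence $L_{\mathbb{F}}Z \to L_{\mathbb{E}}Z$) to translate to ``$\widetilde{f}$ is an $\mathbb{F}$-equivalence'' — is exactly the routine chain of equivalences the authors intend by ``straightforward,'' and each step is justified correctly. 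Your remark about functoriality of the fiberwise localization matching the construction $\mathcal{P}\circ\mathcal{R}\circ\mathbb{E}(-)$ is also the right point to flag, and the paper addresses the same issue in the remark immediately following the proposition.
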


\begin{remark} The above notion of $\pi_1$-$R$-equivalence between spaces can also be described by applying the \textit{fiberwise $R$-completion} construction, as introduced in \cite{BK71} and \cite{BK72}, to the fibration $\widetilde{X} \to X \to B\pi_1(X)$ for any reduced Kan complex $X$. This follows since a map of spaces is an $R$-equivalence if and only if it is a weak homotopy equivalence between $R$-localizations if and only if it is a weak homotopy equivalence between $R$-completions \cite{BK72}. In fact, something stronger is true for simply connected spaces $Z$: the $R$-completion $Z \to R_{\infty}Z$ is equivalent to the $R$-localization of $Z$, for a subring $R$ of $\mathbb{Q}$, or the field of $p$ elements $R=\mathbb{Z}_p$, as discussed in \cite{B75}. Hence, for fibrations with simply connected fiber, e.g. $\widetilde{X} \to X \to B\pi_1(X)$, the fiberwise $R$-completions and fiberwise $R$-localizations agree.

In the proof of our main theorem in section 6, we use the fact that the Bousfield $R$-localization of a space can be assumed to be given by a functorial construction at the level of simplicial sets before passing to the homotopy category. This follows since fibrant replacements may be taken to be functorial in Bousfield's model category structure as a consequence of the small object argument used in \cite{B75}. The completion and its fiberwise version are also functorial constructions as described in \cite{BK71}. 
\end{remark}

\section{The simplicial twisted tensor product}

In this section we introduce the notion of \textit{simplicial twisted tensor product} between a simplicial cocommutative coalgebra and a simplicial associative algebra. More precisely, given a simplicial cocommutative coalgebra $C$, a simplicial associative algebra $A$, and a \textit{simplicial twisting cochain} $\tau:C \rightarrow A$, we construct a simplicial $R$-module $C \otimes_\tau A$, which is compatible with the classical twisted Cartesian product construction. If we further assume that $A$ has the structure of a simplicial cocommutative bialgebra compatible with the simplicial twisting cochain, then $C \otimes_\tau A$ inherits a simplicial cocommutative coalgebra structure. This construction will be used to define the universal cover of a simplicial cocommutative coalgebra in section 5.

\subsection{Notation}

Throughout sections 4, 5, and 6 we assume that $R$ is an integral domain, whenever we say ``coalgebra" and ``algebra" we mean ``coassociative counital $R$-coalgebra"  and ``associative unital $R$-algebra", respectively.

In order to distinguish certain factors in the coproduct of a simplicial cocommutative coalgebra, we introduce a Sweedler style notation to distinguish certain factors in the coproduct:
if $C$ is a simplicial cocommutative coalgebra we denote each coproduct $\Delta_n:C_n \rightarrow C_n \otimes C_n$ by $$\Delta_n(x)=\sum_{(x)} \widetilde{x}\otimes \bar{x}$$ for any $x \in C_n$. We will sometimes omit the subscript when the context is clear and write $\Delta_n(x)= \sum \widetilde{x}\otimes \bar{x}$ and if there are several coproducts involved in a calculation we write $\Delta_n(x)=\sum_{\Delta} \widetilde{x}\otimes \bar{x}$. 

Using this notation the identity $\Delta_{n-1}(d_i(x))=(d_i \otimes d_i)\Delta_n(x)$ may be written as
\[
\sum d_i \widetilde{x} \otimes d_i \bar{x}= \sum \widetilde{d_i x } \otimes \overline{d_i x}.
\]
The equation for coassociativity may be written as
\[
(id \otimes \Delta)\Delta(x)=\sum \sum \widetilde{x} \otimes \widetilde{\overline{x}} \otimes \overline{\overline{x}} =\sum \sum \widetilde{\widetilde{x}} \otimes \overline{\widetilde{x}} \otimes \overline{x} =(\Delta \otimes id)\Delta(x).
\]

\subsection{Simplicial twisting cochains and simplicial twisted tensor product}

\begin{definition}
For any two simplicial $R$-modules  $G$ and $H$, the \textit{simplicial tensor product} of $G \otimes H$ is defined as 

\[(G \otimes H)_n : = G_n \otimes H_n,
\]
with face and degeneracy maps given by
\[
d_i^{G \otimes H}=d_i^G \otimes d_i^H
\]
and
\[
s_i^{G \otimes H}=s_i^G \otimes s_i^H.
\]
\end{definition}

For notational simplicity we will from now on drop the superscripts $G$, $H$ and $G\otimes H$ from the face and degeneracy maps.

\begin{definition}\label{def:MCequationcoalg} 
Let $(C, \Delta)$ be simplicial  connected cocommutative coalgebra and $(A, \mu)$ a simplicial associative algebra. A \textit{simplicial twisting cochain} is a degree $-1$ map of graded $R$-modules $\tau: C \to A$, i.e. a collection of linear maps $\{\tau_n: C_n \to A_{n-1}\}_{n \geq1}$, satisfying the following identities:
\begin{enumerate}\label{eq:MCsimplicialabeliangroup}
\item $d_{j-1}\tau=\tau d_j$, for $j\geq 2$
\item $\tau d_1=\mu\circ (d_0\tau \otimes \tau d_0)\circ \Delta$
\item $s_{j-1}\tau =\tau s_j$, for $j \geq 1$
\item $(id_C \otimes \mu)\circ(id_C\otimes \tau s_0 \otimes id_A)\circ (\Delta \otimes id_A)=id_{C\otimes A}$
\end{enumerate}
\end{definition}
We now define the \textit{simplicial twisted tensor product}.
\begin{theorem-definition}\label{prop:twistedabeliangroup} 
Let $(C, \Delta)$ be simplicial cocommutative coalgebra, $(A, \mu)$ a simplicial associative algebra, and $\tau: C \to A$ a simplicial twisting cochain. For any $x\otimes g \in C_n \otimes A_n$ define
\begin{itemize}
\item $d_0^\tau(x \otimes g):= \sum_{(x)} d_0 (\widetilde{x}) \otimes d_0(g)\cdot \tau(\overline{x})$,
\item $d_i^\tau (x\otimes g):= d_i(x) \otimes d_i(g)$ for $i\geq1$,
\item $s_j^\tau (x\otimes g):= s_j(x) \otimes s_j(g)$ for $j\geq0$.
\end{itemize}
The maps $d_i^{\tau}$ and $s_j^{\tau}$ satisfy the simplicial identities and, consequently, define a simplicial $R$-module denoted by $C \otimes_{\tau} A$ with $(C \otimes_{\tau} A)_n = C_n \otimes A_n$. We call $C \otimes_{\tau} A$ the \textit{simplicial twisted tensor product} of $C$ and $A$ with respect to $\tau:C \to A$, and $d_i^{\tau}$ and $s_j^{\tau}$ the twisted face and degeneracy maps.
\end{theorem-definition}

\begin{proof}
To show that $d_i^{\tau}$ and $s_j^{\tau}$ define a simplicial $R$-module structure on $C \otimes_{\tau} A$ we need to check the simplicial identities. Since the simplicial twisting cochain only affects $d_0$, we only need to check the following identities:
\begin{enumerate}
\item $d_0^\tau d_j=d_{j-1}d_0^\tau$,
\item $d_0^\tau s_j=s_{j-1}d_0^\tau$,
\item $d_0^\tau s_0=id$.
\end{enumerate}
The first identity splits into two cases: $j=1$ and $j\geq 2$. We first show the  $j=1$ case, i.e. $d_0^\tau d_1= d_0^\tau d_0^\tau$. Let $x \otimes g \in C_n \otimes A_n$ for some $n$, then we get the following sequence of identities for $d^{\tau}_0d_1$:
\begin{align}
d_0^\tau d_1 (x \otimes g) &= d_0^{\tau}(d_1(x) \otimes d_1(g)) \\
& = \sum d_0(\widetilde{d_1 (x)})\otimes d_0(d_1(g)) \cdot \tau(\overline{d_1(x)}) \\
& = \sum d_0(d_1( \widetilde{x})) \otimes d_0(d_1 (g)) \cdot \tau(d_1( \overline{x})) \\
& = \sum d_0(d_0( \widetilde{x}) )\otimes d_0(d_0( g)) \cdot \tau(d_1(\overline{x})). \label{eq:MC4.4}
\end{align}
In the second line above we have used that $d_1$ is a coalgebra map and in the third line that $d_0d_1=d_0d_0$. On the other hand, we also have:
\begin{align}
d_0^\tau d_0^\tau (x \otimes g) & = \sum d^\tau_0 (d_0(\widetilde{x})\otimes d_0(g) \cdot \tau(\overline{x})) \\
&= \sum \sum d_0 (\widetilde{d_0(\widetilde{x})}) \otimes  d_0(d_0 (g) \cdot \tau(\overline{x})) \cdot \tau(\overline{d_0(\widetilde{x})}) \\
&= \sum \sum d_0 (d_0(\widetilde{\widetilde{x}})) \otimes  d_0(d_0 (g)) \cdot d_0(\tau(\overline{x})) \cdot \tau(d_0(\overline{\widetilde{x}})) \\
&= \sum \sum d_0 (d_0(\widetilde{x})) \otimes  d_0(d_0 (g)) \cdot d_0(\tau(\widetilde{\overline{x}})) \cdot \tau(d_0(\overline{\overline{x}})). \label{eq2} \\
&= \sum d_0 (d_0(\widetilde{x}) )\otimes  d_0(d_0 (g)) \cdot \left(\sum d_0\tau((\widetilde{\overline{x}})) \cdot  \tau(d_0(\overline{\overline{x}}))\right).\label{eq:MC4.9} 
\end{align}
In the second line we used the fact that $d_0$ is a coalgebra map and an algebra map and in the third line we used the coassociativity of the coproduct. The equality of \ref{eq:MC4.4} and \ref{eq:MC4.9} follows from the definition of a simplicial twisting cochain.

We now verify that $d_0^\tau d_{j}= d_{j-1} d_0^\tau$ for $j\geq 2$ by using the fact that $d_0$ and $d_{j-1}$ are algebra and coalgebra morphisms and the identities $d_{j-1}d_0=d_0d_j$, $d_{j-1}\tau=\tau d_j$. 
\begin{align}
d_{j-1} d_0^\tau (x\otimes g) &=d_{j-1}(\sum d_0(\widetilde{x}) \otimes d_0(g) \cdot \tau(\overline{x})) \\ 
& = \sum d_{j-1}(d_0(\widetilde{x}) )\otimes d_{j-1}(d_0(g)) \cdot d_{j-1}(\tau(\overline{x})) \\
&= \sum d_0 (d_j(\widetilde{x})) \otimes d_0(d_j(g)) \cdot \tau(d_j (\overline{x})) \\
& = \sum d_0(\widetilde{d_j(x)} )\otimes d_0(d_j(g)) \cdot \tau(\overline{ d_j(x)}) \\
&= d_0^{\tau}(d_j(x\otimes g)).
\end{align}

We continue by checking the identity $d_0^\tau s_j=s_{j-1}d_0^\tau$ as follows:
\begin{align}
d_0^\tau s_j (x \otimes g) & = d^\tau_0(s_j(x) \otimes s_j(g)) \\
& = \sum d_0(\widetilde{s_j(x)}) \otimes d_0(s_j(g)) \cdot \tau (\overline{s_j(x)}) \\
&= \sum d_0(s_j\widetilde{(x)}) \otimes d_0(s_j(g)) \cdot \tau (s_j(\overline{x})) \\
&=\sum s_{j-1}(d_0(\widetilde{x})) \otimes s_{j-1}(d_0(g)) \cdot s_{j-1} \tau(\overline{x}) \\
&= s_{j-1}d_0^\tau(x \otimes g).
\end{align}
In the second line, we wrote down the definition of the twisted face map. In the third line we used $s_j$ is a coalgebra map and in the fourth line we used $d_0s_j=s_{j-1}d_0$ and $\tau s_j=s_{j-1} \tau$.

To show the last identity $d_0^\tau s_0=id$, we note
\begin{align}
d_0^\tau s_0 (x \otimes g)&= d^\tau_0 (s_0 (x) \otimes s_0(g)) \\
&= \sum d_0 (\widetilde{s_0 (x)}) \otimes d_0 (s_0(g) )\cdot \tau ( \overline{s_0 (x)}) \\
&= \sum d_0 (s_0 (\widetilde{ x})) \otimes d_0( s_0(g)) \cdot \tau (s_0 (\overline{ x})) \\
&= \widetilde{x} \otimes g \cdot \tau(s_0(\overline{x})) \\
&= x \otimes g.
\end{align}
In the second line, we use the definition of the twisted face map and in the third line, we used the fact that $s_0$ is a coalgebra map. In the fourth line, we used $d_0s_0=id$ and in the fifth line, we used equation 4 of Definition \ref{def:MCequationcoalg}.

From these calculations it follows that the twisted face and degeneracy maps define a simplicial $R$-module $C \otimes_\tau A$.
\end{proof}

\subsection{The simplicial coalgebra structure on the simplicial twisted tensor product}\label{sec:twistedcocomcoalg}

Suppose $A$ is a simplicial algebra equipped with a simplicial cocommutative coalgebra structure making it into a simplicial bialgebra. We will show that if $\tau: C \to A$ is a simplicial twisting cochain which is compatible with the cocommutative coalgebra structure, then the coproducts of $C$ and $A$ induce a simplicial cocommutative coalgebra structure on the simplicial twisted tensor product $C \otimes_{\tau} A$. 

\begin{definition}
Let $C$ be a connected simplicial cocommutative coalgebra and $A$ a simplicial cocommutative bialgebra. A simplicial twisting cochain $\tau: C \to A$ is called a \textit{simplicial coalgebra twisting cochain} if $\tau$ is a coalgebra map, i.e.
$$
\Delta_{n-1} ' \tau = (\tau \otimes \tau)\Delta_n,
$$
where $\Delta_n:C_n \rightarrow C_n \otimes C_n$ is the coproduct of $C$ and $\Delta_{n-1}':A_{n-1} \rightarrow A_{n-1} \otimes A_{n-1}$ is the coproduct of $A$. 
\end{definition}

\begin{proposition}\label{coalgebraontwistedproduct} Let $C$ be a connected simplicial cocommutative coalgebra, $A$  a simplicial cocommutative bialgebra, and $\tau: C\to A$ a simplicial coalgebra twisting cochain. Then the  twisted tensor product  $C\otimes_\tau A$ becomes a simplicial cocommutative coalgebra with coproduct given by 
\[
\Delta_{C\otimes A}:C\otimes A \rightarrow C\otimes A \bigotimes C \otimes A
\]
\[
\Delta_{C\otimes A}:=(id \otimes T \otimes id)(\Delta_C \otimes \Delta_A),
\]
where $T:C \otimes A \rightarrow A \otimes C$ is the flip map. 
\end{proposition}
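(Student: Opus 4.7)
The plan is to check two things: first, that at each simplicial level $n$ the formula $\Delta_{C \otimes A} = (id \otimes T \otimes id)(\Delta_C \otimes \Delta_A)$ makes $C_n \otimes A_n$ into a counital cocommutative coalgebra (with counit $\epsilon_{C_n} \otimes \epsilon_{A_n}$); and second, that all the face and degeneracy maps of $C \otimes_\tau A$ are coalgebra morphisms with respect to these coproducts. The first task is completely formal: the standard tensor product of counital cocommutative coalgebras is a counital cocommutative coalgebra, and coassociativity and cocommutativity of $\Delta_{C \otimes A}$ follow directly from the corresponding properties of $\Delta_C$ and $\Delta_A$.

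For the second task, I would observe that for the untwisted structure maps $d_i^\tau = d_i \otimes d_i$ with $i \geq 1$ and $s_j^\tau = s_j \otimes s_j$, the verification is automatic: each $d_i$ and $s_j$ is already a coalgebra map in $C$ and $A$, hence so is their tensor product. Preservation of the counit by $d_0^\tau$ reduces to $\epsilon_A \circ \tau = \epsilon_C$, which is implicit in the assumption that $\tau$ is a coalgebra map. The only substantive computation is that $d_0^\tau$ is comultiplicative, i.e.
\[
(d_0^\tau \otimes d_0^\tau) \circ \Delta_{C\otimes A} \;=\; \Delta_{C \otimes A} \circ d_0^\tau .
\]

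To verify this I would expand both sides in Sweedler notation on an element $x \otimes g \in C_n \otimes A_n$. On the right, I first use that $d_0$ on $C$ is a coalgebra map to split $\Delta_C(d_0(\widetilde{x}))$; the bialgebra axiom on $A$ then lets me distribute $\Delta_A$ across the product $d_0(g) \cdot \tau(\overline{x})$; and the hypothesis that $\tau$ is a coalgebra map converts $\Delta_A \circ \tau$ into $(\tau \otimes \tau) \circ \Delta_C$. On the left, I instead first apply $\Delta_C \otimes \Delta_A$, then the flip, then $d_0^\tau$ twice. After these manipulations both expressions become sums involving a four-fold iterated coproduct of $x$, with the two expressions differing only by a transposition of the two middle tensor factors coming from $x$. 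Because $C$ is cocommutative, the four-fold iterated coproduct is invariant under the full symmetric group action on $C^{\otimes 4}$, so this transposition leaves the sum unchanged and the two sides coincide.

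The main obstacle will be nothing deeper than careful Sweedler-notation bookkeeping across four-fold coproducts; conceptually, the proof is exactly the assembly of three compatibility inputs (the bialgebra axiom making $\Delta_A$ an algebra map, the coalgebra-map property of $\tau$, and cocommutativity of $C$). Cocommutativity of $\Delta_{C \otimes A}$ at each level is then immediate from cocommutativity of both $\Delta_C$ and $\Delta_A$ combined with the defining formula $(id \otimes T \otimes id)(\Delta_C \otimes \Delta_A)$, completing the proof that $C \otimes_\tau A$ lies in $\mathbf{scCoalg}_R$.
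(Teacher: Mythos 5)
Your plan matches the paper's proof: the untwisted structure maps $d_i^\tau$ ($i\geq 1$) and $s_j^\tau$ are automatically coalgebra morphisms, and the only substantive check is that $d_0^\tau$ commutes with $\Delta_{C\otimes A}$, which the paper carries out by exactly the Sweedler computation you describe, using the bialgebra axiom on $A$, the comultiplicativity of $\tau$, coassociativity and cocommutativity of $\Delta_C$, and the fact that $d_0$ is a coalgebra map on each factor; the two sides do indeed end up differing by a single transposition of the middle two $x$-factors in the iterated coproduct, which cocommutativity absorbs. One small caveat: the paper's definition of a simplicial coalgebra twisting cochain only requires $\Delta_A\tau = (\tau\otimes\tau)\Delta_C$ and does not explicitly demand $\epsilon_A\circ\tau = \epsilon_C$, so the counit compatibility you invoke is a mild extra hypothesis rather than something "implicit" in the definition — it does hold in all the paper's examples, and the paper's own proof also passes over this point when it asserts in one line that the tensor product of the counits is a counit.
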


\begin{proof}
From Proposition \ref{prop:twistedabeliangroup} it follows that $C \otimes _\tau A$ is a simplicial $R$-module, so we must show that $\Delta_{C\otimes A}$ defines a simplicial cocommutative coalgebra structure, i.e. 
\[
\Delta_{C\otimes A} (d_0^{\tau})=(d_0^{\tau} \otimes d_0^{\tau})\Delta_{C \otimes A}.
\]
Since the degeneracy and face maps $d_i$ for $i \geq 1$ commute with the coproduct, these maps satisfy the simpicial identities. We check the compatibility of the coproduct with $d^\tau_0$. For any $x \otimes g \in C \otimes_\tau A$ we have
\begin{align}
    \Delta_{C\otimes A} (d_0^{\tau})(x\otimes g) &= \sum_{\Delta_C} \Delta_{C \otimes A}\left(d_0(\widetilde{x})\otimes d_0(g) \cdot \tau(\overline{x})\right)   \\
    &=(id \otimes T \otimes id)\left(\sum_{\Delta_A} \Delta_C(d_0(\widetilde{x}))\otimes \Delta_{A}( d_0(g) \cdot \tau(\overline{x}))\right) \\
    &=\sum_{\Delta_C}\sum_{\Delta_C} \sum_{\Delta_A} \widetilde{d_0(\widetilde{x})}\otimes \widetilde{d_0(g) \cdot \tau(\overline{x})} \bigotimes \overline{d_0(\widetilde{x})}\otimes \overline{d_0(g) \cdot \tau(\overline{x})}  \\
    & = \sum_{\Delta_C} \sum_{\Delta_C} \sum_{\Delta_C} \sum_{\Delta_A}     \widetilde{d_0 (\widetilde{x})}\otimes \widetilde{d_0(g)} \cdot \widetilde{\tau (\overline{x})} \bigotimes \overline{d_0(\widetilde{x})} \otimes \overline{d_0(g)} \cdot \overline{\tau (\overline{x})} \\
    & = \sum_{\Delta_C} \sum_{\Delta_C} \sum_{\Delta_C} \sum_{\Delta_A}     \widetilde{d_0 (\widetilde{x})}\otimes \widetilde{d_0(g)} \cdot \tau(\widetilde{\overline{x}}) \bigotimes \overline{d_0(\widetilde{x})} \otimes \overline{d_0(g)} \cdot \tau(\overline{\overline{x}}) \\
    & = \sum_{\Delta_C} \sum_{\Delta_C} \sum_{\Delta_C} \sum_{\Delta_A} d_0(\widetilde{\widetilde{x}})\otimes \widetilde{d_0(g)} \cdot \tau(\widetilde{\overline{x}}) \bigotimes d_0(\overline{\widetilde{x}}) \otimes \overline{d_0(g)} \tau(\overline{\overline{x}}) \\
    & = \sum_{\Delta_C} \sum_{\Delta_C} \sum_{\Delta_C} \sum_{\Delta_A} d_0(\widetilde{\widetilde{\widetilde{x}}}) \otimes \widetilde{d_0(g)} \cdot \tau(\overline{\widetilde{x}}) \bigotimes d_0(\overline{\widetilde{\widetilde{x}}}) \otimes  \overline{d_0(g)} \cdot \tau(\overline{x}) \\
    & = \sum_{\Delta_C} \sum_{\Delta_C} \sum_{\Delta_C} \sum_{\Delta_A} d_0(\widetilde{\widetilde{x}}) \otimes \widetilde{d_0(g)} \cdot \tau(\widetilde{\overline{x}})\bigotimes d_0(\overline{\widetilde{x}}) \otimes \overline{d_0(g)} \cdot \tau(\overline{\overline{x}})  
\end{align}
\begin{align}
    & = \sum_{\Delta_C} \sum_{\Delta_C} \sum_{\Delta_C} \sum_{\Delta_A} d_0(\widetilde{\widetilde{x}})\otimes \widetilde{d_0(g)} \cdot \tau(\overline{\widetilde{x}} ) \bigotimes d_0(\widetilde{\overline{x}}) \otimes \overline{d_0(g)} \cdot \tau(\overline{\overline{x}})  \\
    & = \sum_{\Delta_C} \sum_{\Delta_C} \sum_{\Delta_C} \sum_{\Delta_A} d_0(\widetilde{\widetilde{x}}) \otimes d_0(\widetilde{g}) \cdot \tau(\overline{\widetilde{x}}) \bigotimes d_0(\widetilde{\overline{x}}) \otimes d_0(\overline{g}) \cdot \tau(\overline{\overline{x}}) \\
    & = \sum_{\Delta_{C \otimes A}} d_0^\tau(\widetilde{x} \otimes \widetilde{g}) \otimes d_0^\tau(\overline{x} \otimes \overline{g}) \\
    & = (d^\tau_0 \otimes d^\tau_0)\Delta_{C\otimes A}(x \otimes g).
\end{align}
We now explain the above calculation. In the third line, we used $\tau$ is a coalgebra map. In the fourth line, we used the bialgebra compatibility. In the fifth line, we used  $\tau: C \to A$ is a coalgebra map and the compatibility of $d^C_0$ with $\Delta_C$. In the sixth line, we used the coproduct $\Delta_C$ is coassociative. In the seventh line, we used that $d_0^C$ is a coalgebra map and, once more, the coassociativity of $\Delta_C$. In the eighth line, we used the coassocaitivity of $\Delta_C$ again. In the ninth line, we used the cocommutativity of $\Delta_C$. In the tenth line, we used the compatibility of $d^A_0$ with $\Delta_A$. In the eleventh line, we used the definition of $d^\tau_0$.

The tensor product of the counits of $C$ and $A$ defines a counit on $C \otimes_{\tau} A$.
\end{proof}

\subsection{The twisted Cartesian product and its relation to the simplicial twisted tensor product}\label{sec:simplicialtwistingmorphisms}

We recall the notion of twisted Cartesian product of simplicial sets and discuss the universal cover of a reduced Kan complex as an example following \cite{C71}. Then we explain the relationship between the twisted Cartesian product and the simplicial twisted tensor product constructions.

\begin{definition}\label{def:MCsimplicialsets}
Let $X$ be a reduced simplicial set and $G$ be a simplicial group. A \textit{twisting morphism} is a degree $-1$ map $t: X \to G$ of graded sets, i.e. a sequence of maps 
$\{t_n:X_n \rightarrow G_{n-1}\}_{n\geq1}$,
satisfying the following identities for any $x \in X_n$
\begin{enumerate}\label{eq:MCsimplicialsets}
 \item     $d_i(t(x))  =t(d_{i+1}(x))$  for $i\geq 1$,
\item   $ d_0(t(x))  = t(d_1(x)) \cdot t(d_0(x))^{-1}$, 
\item    $s_i(t(x))  = t(s_{i+1}(x))$  for $i\geq 1$,
\item   $  e = t(s_0(x))$,
\end{enumerate}
where $e$ denotes the identity element of $G_{n}$. The \textit{twisted Cartesian product of $X$ and $G$ with respect to $t: X \to G$} is the simplicial set defined by
\[(X \times_t G)_n:= X_n \times G_n,
\]
together with face and degeneracy maps given by
 \begin{align}
d_0(x,g) &:=(d_0(x) ,d_0(g) \cdot t(x)), &  \\
s_i(x,g) & := (s_i(x),s_i(g)) ,&\\
d_i(x,g)&:=(d_i(x),d_i(g)) & \mbox{ for $i\geq 1$},
 \end{align}
 for $(x,g)\in (X \times_t G)_n$.
\end{definition}

The twisted Cartesian product construction yields a simplicial model for the universal cover of a reduced Kan complex $X$ as follows. Denote by $G$ the fundamental group of $X$ considered as a discrete simplicial set, i.e. $G_n:=\pi_1(X)$ for all $n \geq 0$ and with the identity as face and degeneracy maps. Define a twisting morphism 
\[
t:X_n \rightarrow G_{n-1},
\]
by 
\[
t(x):=[d_2 ... d_n(x)],
\]
where $[d_2 ... d_n(x)]$ denotes the homotopy class of the $1$-simplex $d_2 ... d_n(x)$ considered as an element of the fundamental group.

\begin{proposition}[\cite{C71}, Example 6.9]\label{prop:simplicialfundamentalgrouptwisting mmorphism}
Let $X$ be a reduced Kan complex and $G$ the fundamental group of $X$ considered as a discrete simplicial group. Then the twisted Cartesian product $\widetilde{X}:=X \times_t G$ is equal to the universal cover of $X$. The right action of $G$ on $\widetilde{X}$ is given by multiplication on the second component of the twisted Cartesian product.
\end{proposition}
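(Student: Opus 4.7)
My plan is in two stages. First, verify that $t$ is a twisting morphism; then identify $X \times_t G$ with the universal cover.

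Since $G$ is discrete, all faces and degeneracies on $G$ act as the identity, so axioms 1, 3 and 4 of Definition \ref{def:MCsimplicialsets} reduce to equalities of homotopy classes of $1$-simplices in $X$. Applying the cosimplicial identities $d_i d_j = d_{j-1} d_i$ and $d_i s_j = s_{j-1} d_i$ for $i < j$, one moves $d_{i+1}$ or $s_{i+1}$ past the iterated face $d_2 \cdots d_n$ for $i \geq 1$, which yields axioms 1 and 3. Axiom 4 follows because $d_2 \cdots d_{n+1} s_0 = s_0 d_1 \cdots d_n$ produces a degenerate $1$-simplex, trivial in $\pi_1(X)$. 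The substantive content is axiom 2: $t(x) = t(d_1 x) \cdot t(d_0 x)^{-1}$. For $x \in X_2$ this is the standard relation $[d_2 x] = [d_1 x] \cdot [d_0 x]^{-1}$ in $\pi_1(X)$ witnessed by $x$ itself (using that $X$ is reduced, so each $1$-simplex is a loop). For $x \in X_n$ with $n \geq 3$, set $y = d_3 \cdots d_n(x) \in X_2$; applying $d_i d_j = d_{j-1} d_i$ iteratively one checks $d_2 y = d_2 \cdots d_n x$, $d_1 y = d_2 \cdots d_{n-1} d_1 x$, and $d_0 y = d_2 \cdots d_{n-1} d_0 x$, reducing axiom 2 for $x$ to the $n=2$ case applied to $y$.

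For the identification, the first projection $p: X \times_t G \to X$ is simplicial since all twisted faces and degeneracies project to the corresponding maps of $X$ on the first factor. The classical theory of twisted Cartesian products \cite{C71} shows that $p$ is a principal $G$-bundle: the right $G$-action by multiplication on the second factor is free, its quotient is $X$, and $p$ is a Kan fibration with fiber $G$. Since $G$ is discrete, the long exact sequence of homotopy groups reads
\[
0 \to \pi_1(X \times_t G) \to \pi_1(X) \xrightarrow{\partial} \pi_0(G) \to \pi_0(X \times_t G) \to 0,
\]
with $\pi_0(G) = G = \pi_1(X)$. To compute $\partial[\gamma]$ for $\gamma \in X_1$, lift $\gamma$ to $(\gamma, e) \in (X \times_t G)_1$; then $d_1(\gamma,e) = (v,e)$ is the basepoint ($v$ the unique vertex) and $d_0^t(\gamma,e) = (v, t(\gamma)) = (v, [\gamma])$, so $\partial = \mathrm{id}$. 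This forces $\pi_1(X \times_t G) = 0$ and $\pi_0(X \times_t G) = 0$, so $X \times_t G$ is a connected, simply connected principal $G$-bundle over $X$ with $G = \pi_1(X)$ acting by deck transformations; by uniqueness of the universal cover the two coincide. The main obstacle is the general-$n$ verification of axiom 2, handled by the simplicial-identity reduction to $n=2$ above; the remaining step is the standard identification of $\partial$ with the identity on $\pi_1(X)$, which is exactly the statement that the classifying twisting for the universal cover is given by the homotopy class of the edge between vertices $0$ and $1$.
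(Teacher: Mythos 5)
The paper states this proposition as a citation to Curtis~\cite{C71}, Example~6.9, and gives no proof of its own, so there is no paper argument to compare against. Your self-contained proof is correct. The verification of the twisting-morphism axioms is right: since $G$ is discrete the face and degeneracy maps on the $G$-side are identities, so axioms 1, 3, 4 reduce to simplicial-identity bookkeeping, and the nontrivial axiom 2 indeed reduces, via the reduction $y = d_3 \cdots d_n(x)$ and the identities $d_1 d_3 \cdots d_n = d_2 \cdots d_{n-1} d_1$ and $d_0 d_3 \cdots d_n = d_2 \cdots d_{n-1} d_0$, to the $n=2$ relation $[d_2 x] = [d_1 x]\cdot[d_0 x]^{-1}$, which matches the presentation of $\pi_1(X)$ used in the proof of Theorem~\ref{adamscobar}. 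The identification with the universal cover via the long exact sequence of the Kan fibration $G \to X \times_t G \to X$ is also correct: your computation of the connecting map $\partial: \pi_1(X) \to \pi_0(G) = \pi_1(X)$ as the identity (by lifting $\gamma$ to $(\gamma, e)$ and applying the twisted $d_0$) forces $\pi_0 = \pi_1 = 0$ for the total space, and freeness of the $G$-action with quotient $X$ then pins it down as the universal cover. One small slip: you write ``cosimplicial identities'' where you mean ``simplicial identities,'' but the identities you actually use ($d_i d_j = d_{j-1} d_i$ and $d_i s_j = s_{j-1} d_i$ for $i < j$, etc.) are the correct ones.
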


We proceed to show that if $X$ is a reduced Kan complex and $G$ is the fundamental group of $X$ seen as a discrete simplicial group, then the twisting morphism $t:X \rightarrow G$  from Section \ref{sec:simplicialtwistingmorphisms}, induces a simplicial coalgebra twisting cochain $\tau= R t:R X \rightarrow R G$ and that we have a natural isomorphism of simplicial cocommutative coalgebras $R (X \times_t G) \cong R X \otimes_\tau R G.$

\begin{lemma}
If $t:X \rightarrow G$ is a twisting morphism from a reduced simplicial set $X$ to a simplicial group $G$, then $\tau=R t: R X \rightarrow R G$ is a simplicial coalgebra twisting cochain.
\end{lemma}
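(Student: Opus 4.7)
The plan is to verify the four axioms of a simplicial twisting cochain in Definition \ref{def:MCequationcoalg} together with the coalgebra compatibility condition, by translating each of the four axioms of a twisting morphism from Definition \ref{def:MCsimplicialsets} into its linearized counterpart. Since $\tau = Rt$ is by construction the $R$-linear extension of a map of graded sets, it suffices to check all identities on basis elements $x \in X_n$.

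First I would observe that the coproducts on $RX$ and $RG$ satisfy $\Delta(x) = x \otimes x$ on basis elements, so for any simplex $x$ we have $\Delta' \tau(x) = t(x) \otimes t(x) = (\tau \otimes \tau)\Delta(x)$, which gives the coalgebra-map property immediately. Next, conditions (1) and (3) of Definition \ref{def:MCequationcoalg} are the $R$-linear extensions of conditions (1) and (3) of Definition \ref{def:MCsimplicialsets} and so follow directly. For condition (4), on a basis element $x \otimes g \in RX_n \otimes RG_n$ the left-hand side evaluates to
\[
(id \otimes \mu)(x \otimes \tau s_0(x) \otimes g) = x \otimes (t(s_0(x)) \cdot g) = x \otimes (e \cdot g) = x \otimes g,
\]
using axiom (4) for twisting morphisms, $t(s_0(x)) = e$.

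The only step that requires a small manipulation is condition (2), the Maurer--Cartan-type equation. Starting from axiom (2) for $t$, namely $d_0 t(x) = t(d_1(x)) \cdot t(d_0(x))^{-1}$, I would right-multiply both sides by $t(d_0(x))$ to obtain $t(d_1(x)) = d_0 t(x) \cdot t(d_0(x))$. Using $\Delta(x) = x \otimes x$, the right-hand side of condition (2) of Definition \ref{def:MCequationcoalg} evaluated at $x$ is exactly $\mu(d_0\tau(x) \otimes \tau d_0(x)) = d_0 t(x) \cdot t(d_0(x))$, which matches $\tau d_1(x) = t(d_1(x))$.

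I do not expect any genuine obstacle: the entire lemma is a mechanical translation via the free $R$-module functor, and the one place where inverses in $G$ are used (axiom (2) for twisting morphisms) is precisely absorbed into the multiplicative Maurer--Cartan form of axiom (2) for twisting cochains. The proof would end with a brief remark that, because $RX$ and $RG$ are generated as $R$-modules by the simplices of $X$ and $G$, all identities extend $R$-linearly from basis elements to all of $RX \otimes RG$, and hence $\tau = Rt$ is indeed a simplicial coalgebra twisting cochain.
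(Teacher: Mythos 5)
Your proof is correct and follows essentially the same approach as the paper: verify each axiom on group-like basis elements of $RX$ and $RG$, extend $R$-linearly, and absorb the group inverse in axiom (2) of Definition~\ref{def:MCsimplicialsets} into the multiplicative form of axiom (2) of Definition~\ref{def:MCequationcoalg}. The only cosmetic difference is that you rearrange $d_0t(x) = t(d_1(x))\,t(d_0(x))^{-1}$ before substituting, whereas the paper substitutes it directly and cancels; these are equivalent manipulations.
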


\begin{proof}
We show that $\tau$ satisfies the equations of Definition \ref{def:MCequationcoalg} and that $\tau$ is a coalgebra map.

Equations 1 and 3 from Definition \ref{def:MCequationcoalg} follow immediately from equations 1 and 3 of Definition \ref{def:MCsimplicialsets}, respectively. We show equation 2 of Definition \ref{def:MCsimplicialsets} implies equation 2 of Definition \ref{def:MCequationcoalg}, i.e. $\tau$ satisfies the identity
\begin{equation}\label{eq:MCequationinproof}
\tau d_1=\mu\circ (d_0\tau \otimes \tau d_0)\circ \Delta
\end{equation}
Since $R X$ and $R G$ are free as $R$-modules generated by the sets $X$ and $G$ they have a basis. It is therefore enough to check that  for any basis element  $x \in X_n \subset (R X)_{n}$, we have 
\begin{align}
    &\mu\circ (d_0\tau \otimes \tau d_0)\circ \Delta(x) &=\\ 
    &d_0\tau(g) \cdot \tau d_0 (x)& = \\
    &Rt(d_1(x)) \cdot Rt (d_0(x))^{-1} \cdot Rt (d_0(x))& = \\
    &Rt(d_1(x)) &= \\
    &\tau (d_1(x)),
    \end{align}
where we used that $x$ is group-like in the second line and equation 2 of Definition 9 in the third line.

We now show that equation  4 of Definition \ref{def:MCequationcoalg},
\begin{equation}\label{eq:MC2}
(id_{R X} \otimes \mu)\circ(id_{R X}\otimes \tau s_0 \otimes id_{R G})\circ (\Delta \otimes id_{R G})=id_{RX\otimes R G},
\end{equation}
is also satisfied.  This follows since
\begin{align}
    {}& (id_{R X} \otimes \mu_{R G})\circ(id_{R X}\otimes \tau s_0 \otimes id_{R G})\circ (\Delta_{R X} \otimes id_{R G})(x\otimes g)&= \\
    &(id_{R X} \otimes \mu_{R G})\circ(id_{R X}\otimes \tau s_0 \otimes id_{R G})(x \otimes x \otimes g)& = \\
    &x \otimes \tau s_0 x \cdot g &=\\
    &x \otimes e\cdot g&= \\
  &  x \otimes g
\end{align}
In the first line, we used $x$ is group-like and in the third line we used equation 4 of Definition \ref{def:MCsimplicialsets}.

The fact that $\tau$ is a coalgebra map follows from the fact that both $R X$ and $R G$ have a basis of group-like elements and that $\tau$ preserves these basis elements. Namely, for $x \in X$ a basis element of $R X$, we have 
\begin{align}
  {} & \Delta_{R G}(\tau(x)) & = \\
    &\tau(x) \otimes \tau(x) &= \\
    &(\tau \otimes \tau) \Delta_{R X}(x).
\end{align} \end{proof}
The twisted Cartesian product is compatible with the simplicial twisted tensor product in the following sense.
\begin{proposition} Let $X\times_t G$ be the twisted Cartesian product of a simplicial set $X$ and a simplicial group $G$ with respect to a twisting morphism $t: X_n \to G_{n-1}$. There is an isomorphism of simplicial cocommutative coalgebras \[R(X\times_t G)\cong R X \otimes_\tau R G,\] where $\tau=Rt$.
\end{proposition}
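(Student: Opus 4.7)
The plan is to define the obvious map $\phi : R(X\times_t G) \to R X \otimes_\tau R G$ on basis elements by $\phi(x,g) := x \otimes g$ for $(x,g) \in X_n \times G_n$, and extend $R$-linearly in each simplicial degree. Since $X_n \times G_n$ is simultaneously a basis for $R(X\times_t G)_n = R[X_n \times G_n]$ and for $(R X \otimes_\tau R G)_n = R X_n \otimes R G_n = R[X_n]\otimes R[G_n]$, the map $\phi_n$ is a degreewise $R$-linear isomorphism. It therefore remains to verify that $\phi$ commutes with the face and degeneracy maps on each side, and intertwines the two coproducts.

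For $i\geq 1$ and $j\geq 0$, both $d_i$ and $s_j$ act componentwise without any twisting on each side, so compatibility is immediate from the definitions. The only non-trivial check is for $d_0$. Here I would exploit the crucial feature that every basis element $x \in X_n \subset (R X)_n$ is group-like, i.e.\ $\Delta_{RX}(x) = x \otimes x$. Plugging this into the formula of Theorem-Definition \ref{prop:twistedabeliangroup} gives
\[
d_0^\tau(x \otimes g) \;=\; \sum_{(x)} d_0(\widetilde{x}) \otimes d_0(g)\cdot \tau(\overline{x}) \;=\; d_0(x)\otimes d_0(g)\cdot (Rt)(x) \;=\; d_0(x)\otimes \bigl(d_0(g)\cdot t(x)\bigr),
\]
which is exactly $\phi$ applied to the twisted face map $d_0(x,g) = (d_0(x),\,d_0(g)\cdot t(x))$ of the Cartesian product. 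Thus $\phi\circ d_0 = d_0^\tau \circ \phi$, and the previous lemma ensures $\tau = Rt$ is indeed a simplicial coalgebra twisting cochain so that the right-hand side is defined.

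Finally, for the coalgebra structure: on $R(X\times_t G)$ the coproduct is the standard cocommutative coproduct for which all basis elements are group-like, so $\Delta(x,g) = (x,g)\otimes (x,g)$. On the other hand, Proposition \ref{coalgebraontwistedproduct} gives the coproduct on $R X \otimes_\tau R G$ as $(id \otimes T \otimes id)\circ(\Delta_{R X}\otimes \Delta_{R G})$, and applying this to $x\otimes g$ with both $x$ and $g$ group-like yields $(x\otimes g)\otimes(x\otimes g)$. Thus $\phi$ intertwines the two coproducts on basis elements, and hence on all of $R(X\times_t G)$ by $R$-linearity. The counits on both sides are induced by the canonical map to $R$ and are visibly preserved.

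I expect no genuine obstacle: the entire statement is driven by the observation that $RY$ has a canonical basis of group-like elements for any simplicial set $Y$, which collapses all the Sweedler sums appearing in both the twisted face map and the coalgebra structure on the simplicial twisted tensor product to their untwisted, set-theoretic counterparts. The only point requiring any care is bookkeeping with the twisting data to confirm the identification $\tau = Rt$ is compatible with $t$'s action in $d_0$, which is precisely what the preceding lemma supplies.
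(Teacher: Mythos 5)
Your proposal is correct and follows essentially the same approach as the paper: define $\phi(x,g) := x\otimes g$ on the shared basis, note that for $i\geq 1$ the face maps and all degeneracies commute trivially, handle $d_0$ by exploiting that $x$ is group-like so the Sweedler sum collapses, and verify the coalgebra compatibility again via group-likeness of both factors. The paper's proof is the same computation, step for step.
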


\begin{proof}
Each $R$-module $R_n(X\times_t G)$ has a basis of the form $(x,g)$ with $x \in X_n$ and $g \in G_n$. Similarly $(R X \otimes_\tau R G)_n$ has a basis of the form $x \otimes g$, again with $x \in X_n$ and $g\in G_n$. Using these bases we define a map 
\[
\varphi:R(X\times_t G)\rightarrow R X \otimes_\tau R G,
\]
by setting
\[
\varphi(x,g):=x \otimes g.
\]
We claim the map $\varphi$ induces an isomorphism of graded $R$-modules, commutes with face and degeneracy maps, and commutes with the coalgebra structures.

The fact that $\varphi$ induces an isomorphism of graded $R$-modules is clear because both $R(X\times_t G)$ and $R X \otimes_\tau R G$ are free as $R$-modules and $\varphi$ induces a bijection on the bases.

Note that it is straightforward to show that $\varphi$ commutes with the degeneracy maps and the face maps $d_i$ for $i \geq 1$, so we show that $\varphi(d_0(x,g))=d_0(\varphi(x,g))$ by noting that
\begin{align}
    \varphi(d_0(x,g)) & = \varphi(d_0(x),d_0(g) \cdot t(x)) \\
    & =d_0(x) \otimes d_0(g) \cdot t(x) \\
    &= d_0(x) \otimes d_0(g) \cdot \tau(x) \\
    & = d_0(\varphi(x \otimes g)),
\end{align}
where we used in the second line that on basis elements $\tau(x)$ is defined as $t(x)$ and in the third line that $x$ is group-like. 

Finally,  the fact that $\varphi$ is a coalgebra map follows since all basis elements are group-like, namely
\begin{align}
\Delta(\varphi(x,g)) & = \Delta (x\otimes g) \\
& = x \otimes g \bigotimes x \otimes g \\
& =\varphi(x,g) \bigotimes \varphi(x,g) \\
&= (\varphi \otimes \varphi)\Delta (x,g).
\end{align}
\end{proof}

As an immediate consequence we have the following corollary.

\begin{corollary}\label{cor:universalcover}
Let $X$ be a reduced Kan complex and $G$ be the fundamental group of $X$ seen as a discrete simplicial group. Let $t:X \rightarrow G$ be the twisting morphism of simplicial sets from Proposition \ref{prop:simplicialfundamentalgrouptwisting mmorphism} and denote $\tau= R t$. Then $R X \otimes_\tau R G$ is isomorphic to the simplicial cocommutative coalgebra of chains on the universal cover of $X$. \end{corollary}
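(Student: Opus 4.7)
The plan is to recognize that this corollary is essentially the concatenation of two results already established in the excerpt, so the proof should be a short chain of identifications rather than any new construction. First I would invoke Proposition \ref{prop:simplicialfundamentalgrouptwisting mmorphism} (Chen's Example 6.9), which states that for a reduced Kan complex $X$ with $G = \pi_1(X)$ viewed as a discrete simplicial group and $t \colon X \to G$ the twisting morphism sending an $n$-simplex $x$ to the homotopy class $[d_2 \cdots d_n(x)]$, the twisted Cartesian product $X \times_t G$ coincides with the simplicial universal cover $\widetilde{X}$ of $X$.

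Next I would apply the normalized free $R$-module functor $R(-)$ to this equality, obtaining an isomorphism of simplicial cocommutative coalgebras $R(\widetilde{X}) \cong R(X \times_t G)$, where both sides carry the diagonal coalgebra structure induced on chains of a simplicial set.

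Finally I would invoke the proposition just before the corollary, which exhibits a natural isomorphism of simplicial cocommutative coalgebras $\varphi \colon R(X \times_t G) \xrightarrow{\cong} R X \otimes_\tau R G$ with $\tau = R t$. Composing these two isomorphisms yields the desired identification $R X \otimes_\tau R G \cong R(\widetilde{X})$. There is no genuine obstacle: one only needs to verify that $\tau = R t$ is indeed a simplicial coalgebra twisting cochain (which was checked in the lemma preceding the proposition, and uses only the defining identities of $t$ together with the fact that the basis elements of $RX$ and $RG$ are group-like), so that the simplicial twisted tensor product on the right-hand side is defined and carries its cocommutative coalgebra structure from Proposition \ref{coalgebraontwistedproduct}. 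The compatibility of the diagonal coproduct on chains of a simplicial set with $\varphi$ is immediate because $\varphi$ sends group-like basis elements to group-like tensors.
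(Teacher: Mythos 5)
Your proposal is correct and takes essentially the same route as the paper, which records the corollary as an immediate consequence of Proposition \ref{prop:simplicialfundamentalgrouptwisting mmorphism} and the preceding proposition identifying $R(X \times_t G)$ with $RX \otimes_\tau RG$ as simplicial cocommutative coalgebras. (Minor slip: the reference \cite{C71} is Curtis, not Chen.)
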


\section{The universal cover of a simplicial coalgebra}

Let $C$ be a connected simplicial coassociative $R$-coalgebra. The chain complex of normalized chains $N_*(C)$ becomes a differential graded (dg) connected coassociative coalgebra when equipped with the coproduct given by
$$\delta: N_*(C) \xrightarrow{N_*(\Delta)} N_*(C \otimes C) \xrightarrow{AW} N_*(C) \otimes N_*(C),$$
where $\Delta$ denotes the coproduct of $C$ and $AW$ the Alexander-Whitney map, as previously recalled in section 2. For any such $C$, the cobar construction $\Omega N_*(C)=\Omega(N_*(C), \delta)$ yields a dg associative algebra defining a functor
\[ \Omega N_*: \textbf{sCoalg}^0_{R} \to \textbf{dgAlg}_{R},\]
where $\textbf{sCoalg}^0_{R}$ is the category of connected simplicial coalgebras and $\textbf{dgAlg}_{R}$ the category of dg algebras. 

The backbone for this section comes from the following result, which shows that the dg bialgebra of normalized chains on the based loop space is completely determined by the simplicial cocommutative coalgebra of chains on the underlying Kan complex.

\begin{theorem}\label{nscadams} Let $X$ be a reduced Kan complex with $X_0=\{b\}$ and denote by $C_*(X;R)= N_*(RX)$ the dg coassociative coalgebra of normalized chains on $X$ with $R$-coefficients. Then
\\
1) there exists a natural quasi-isomorphism of dg algebras $$\varphi: \Omega C_*(X;R) \simeq C_*(\Omega_b|X|),$$
where $\Omega_b|X|$ denotes the topological monoid of Moore loops based at $b$ on the geometric realization $|X|$, and
\\
2) there exists a natural coproduct $$\nabla: \Omega C_*(X;R) \to \Omega C_*(X;R) \otimes \Omega C_*(X;R) $$
making $\Omega C_*(X;R)$ a dg bialgebra such that $\varphi$ becomes an quasi-isomorphism of dg bialgebras and for any $\alpha \in (\Omega C_*(X;R))_0$ 
$$\nabla_0( \alpha ) = \alpha \otimes \alpha + 1_{R} \otimes \alpha + \alpha \otimes 1_{R}.$$
\end{theorem}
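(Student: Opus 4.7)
The first claim is the extension of Adams' classical cobar theorem to the non-simply connected setting: I would simply invoke the results of \cite{RZ16} and \cite{R19}, in which the first and third authors construct a natural chain map $\varphi: \Omega C_*(X;R) \to C_*(\Omega_b|X|)$ by sending a monomial $\{x_1|\dots|x_k\}$ to the concatenation of Moore loops represented (after a canonical choice of model for the based loop space as a simplicial monoid in the style of Kan's $G$-construction) by the simplices $x_1,\dots,x_k$, and show that $\varphi$ is a quasi-isomorphism of differential graded algebras for every reduced Kan complex $X$.

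For the second claim, my plan is to transport (and then match) the H-space coproduct from $C_*(\Omega_b|X|)$ through $\varphi$ by first constructing $\nabla$ intrinsically on $\Omega C_*(X;R)$. The construction exploits the fact that, because $RX$ is a \emph{simplicial cocommutative} coalgebra with set-like basis, every simplex $x \in X_n$ satisfies $\Delta_n(x) = x \otimes x$ in $RX$. I would define $\nabla$ on the free algebra $\Omega C_*(X;R) = T(s^{-1}\overline{C_*(X;R)})$ by specifying its value on each generator $s^{-1}x$ (with $x \in X_n$ for $n \geq 1$) through a shuffle-style formula built from the iterated Alexander-Whitney decomposition of $x$, and then extending multiplicatively so that $\nabla$ is an algebra map. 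This is the coproduct introduced by Baues; the key identity forcing its coassociativity and chain-map property with respect to the cobar differential is precisely the coassociativity of $\Delta$ together with its group-like behavior on simplices. On a $1$-simplex $x$, the formula collapses (because both faces of $x$ are the basepoint and are annihilated by normalization) to $\nabla(\{x\}) = \{x\}\otimes\{x\} + \{x\}\otimes 1 + 1\otimes\{x\}$, which is the asserted relation and expresses that $1 + \{x\}$ is group-like — precisely the image of the homotopy class of $x$ in the group algebra $R[\pi_1(X)] \cong H_0(\Omega C_*(X;R))$.

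Having defined $\nabla$, I would verify that $\varphi$ is a map of coalgebras by a naturality argument: the coproduct on $C_*(\Omega_b|X|)$ is induced by the topological diagonal composed with an Eilenberg-Zilber shuffle, and the same shuffle structure appears in the definition of $\nabla$ on the algebraic side. By the simplicial Eilenberg-Zilber theorem applied to the based loops model, this compatibility reduces to a check on the generators $\{x\}$, where it follows by direct inspection from the group-like behavior of simplices in $RX$. Combined with the quasi-isomorphism statement from part~1, this upgrades $\varphi$ to a quasi-isomorphism of dg bialgebras.

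The main obstacle I expect is establishing, at the chain level, that $\nabla$ is compatible with the cobar differential $D$, because the higher-degree formula for $\nabla(s^{-1}x)$ must simultaneously encode (i) the cocommutativity of $\Delta$ on $RX$ and (ii) the higher homotopies witnessing cocommutativity of the Alexander-Whitney coproduct on $C_*(X;R)$. Concretely, one must track many cancellations between the internal differential of $C_*(X;R)$, the bar-like terms introduced by $\Delta$, and the shuffle terms introduced by $\nabla$; this is precisely the delicate combinatorial content of Baues' construction, and it is the step where the $E_\infty$-structure on $C_*(X;R)$ discussed in \cite{BF04} is implicitly at work.
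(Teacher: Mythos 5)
Your proposal matches the paper's approach: the paper's entire proof of this theorem consists of citing \cite{RZ16} and \cite{R19} for part~1 and \cite{RZ19} for part~2, and the paper's Remark after Proposition~\ref{coproductonH_0} confirms that the relevant coproduct is the extension of Baues' construction encoded by the surjection ($E_\infty$) operad, exactly as you describe. Since you are outlining the content of \cite{RZ19} rather than citing it, the one substantive point worth flagging is the parenthetical justification for the formula on a $1$-simplex. The term $\{x\}\otimes\{x\}$ in $\nabla_0(\{x\})$ does \emph{not} come from the two faces of $x$ being collapsed to the basepoint and killed by normalization --- that vanishing is instead what makes $\{x\}$ a cycle in $(\Omega C_*(X;R))_0$ (no quadratic AW term in the cobar differential). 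The $\{x\}\otimes\{x\}$ term comes from the Steenrod cup-$1$ coproduct $\smile_1$ (the first homotopy witnessing cocommutativity of the Alexander--Whitney coproduct), which on a $1$-simplex is precisely $x \otimes x$ up to sign; equivalently, in the paper's intrinsic formulation (Section~\ref{sec:coalgebrastructureoncobar}), it comes from the fact that the simplicial coalgebra coproduct $\Delta_1 \colon (RX)_1 \to (RX)_1 \otimes (RX)_1$ is group-like on the simplex basis. With this correction, your description of the remaining compatibility verifications and their reliance on the $E_\infty$-structure from \cite{BF04} agrees with the intent of the paper.
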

\begin{proof} Part 1 is an extension of a classical theorem of Adams proven in \cite{RZ16} and \cite{R19} by relating the cobar construction to a cubical version of the left adjoint of the homotopy coherent nerve functor. Part 2 is shown in \cite{RZ19}. 
\end{proof}

A direct consequence of the above result is that $H_0( \Omega C_*(X;R))$ is naturally isomorphic to the fundamental group ring. However, we have decided to include a direct, self contained, and elementary proof for this particular result since it showcases an idea that will be generalized in Section 5.1 for the construction of the fundamental bialgebra of an arbitrary simplicial cocommutative coalgebras. 
 
\begin{theorem}\label{adamscobar} Let $X$ be a reduced Kan complex and denote by $C=RX$ the connected simplicial cocommutative coalgebra of chains. Then there is a natural isomorphism of $R$-algebras
\[H_0(\Omega N_*(C)) \cong R[\pi_1(X)] \]
between the $0$-th homology of the cobar construction of the connected dg coassociative coalgebra $(N_*(C), \delta)$ and the fundamental group algebra of $X$. Consequently, the $R$-algebra $H_0(\Omega N_*(C))$ extends to a cocommutative Hopf algebra whose group-like elements form a group isomorphic to $\pi_1(X)$. 
\end{theorem}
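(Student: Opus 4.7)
The plan is to present $H_0(\Omega N_*(C))$ explicitly by generators and relations and match this against the classical presentation of $R[\pi_1(X)]$ obtained from the $2$-skeleton of $X$. Throughout write $\{c\}$ for $s^{-1}c$ in the cobar. Since $X$ is reduced, $\overline{N_*(C)}$ is concentrated in degrees $\geq 1$ with $N_1(C)$ and $N_2(C)$ freely generated by non-degenerate $1$- and $2$-simplices respectively. A monomial $\{c_1|\cdots|c_k\}$ has cobar degree $\sum_i(|c_i|-1)$, so $(\Omega N_*(C))_0 = T(N_1(C))$ and $(\Omega N_*(C))_1$ is spanned by monomials with exactly one factor in $N_2(C)$ and the rest in $N_1(C)$. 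Since negative degrees vanish, $H_0(\Omega N_*(C)) = T(N_1(C)) / D\bigl((\Omega N_*(C))_1\bigr)$.

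The next step is to compute $D$ on generators. For an edge $e$ one checks $D\{e\} = 0$, since $\partial e$ has trivial image in $\overline{N_*(C)}$ and the reduced Alexander--Whitney coproduct $\bar\delta$ vanishes on $1$-chains. For a non-degenerate $2$-simplex $\sigma$, combining $\partial\sigma = d_0\sigma - d_1\sigma + d_2\sigma$ with the unique interior term $\bar\delta(\sigma) = d_2\sigma \otimes d_0\sigma$ of the Alexander--Whitney formula yields, with the Koszul signs coming from the shifts,
\[
D\{\sigma\} \;=\; \{d_1\sigma\} - \{d_0\sigma\} - \{d_2\sigma\} - \{d_2\sigma\,|\,d_0\sigma\}.
\]
Since $D\{e\} = 0$, the derivation property then shows that $D\bigl((\Omega N_*(C))_1\bigr)$ equals the two-sided ideal of $T(N_1(C))$ generated by the elements $D\{\sigma\}$ as $\sigma$ ranges over non-degenerate $2$-simplices of $X$.

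Now perform the change of variables $g_e := 1 + \{e\}$ for non-degenerate $e$ and $g_{s_0 b} := 1$. A direct expansion gives the key identity
\[
g_{d_1\sigma} - g_{d_2\sigma}\,g_{d_0\sigma} \;=\; D\{\sigma\},
\]
so $H_0(\Omega N_*(C))$ is presented as the free $R$-algebra on $\{g_e : e \in X_1\}$ modulo $g_{s_0 b} = 1$ and $g_{d_1\sigma} = g_{d_2\sigma}\, g_{d_0\sigma}$ for every $\sigma \in X_2$. Invertibility of each $g_e$ is secured by the Kan condition: filling a $\Lambda^2_0$-horn with $d_1 = s_0 b$ and $d_2 = e$ produces a $2$-simplex $\sigma$ whose relation reads $1 = g_e\, g_{d_0\sigma}$, giving a right inverse, while a $\Lambda^2_2$-horn filler supplies a left inverse.

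Finally, compare with the classical van Kampen presentation of $\pi_1(X)$ from the $2$-skeleton of $X$: generators are homotopy classes of edges, with relations $[d_1\sigma] = [d_2\sigma]\cdot[d_0\sigma]$ from $2$-simplices. The assignment $[e] \mapsto g_e$ is therefore a well-defined isomorphism of $R$-algebras $R[\pi_1(X)] \xrightarrow{\cong} H_0(\Omega N_*(C))$, and the cocommutative Hopf algebra structure on $R[\pi_1(X)]$ transports across, realizing $\pi_1(X)$ as its group of group-likes. The main technical obstacle will be bookkeeping of the signs in the shift conventions, ensuring $D\{\sigma\}$ produces the multiplicative relation in the correct order $g_{d_2\sigma}\, g_{d_0\sigma}$ rather than the opposite product $g_{d_0\sigma}\, g_{d_2\sigma}$; this is purely a Koszul sign computation for $(s^{-1}\otimes s^{-1})\bar\delta$.
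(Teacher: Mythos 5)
Your proof is correct and follows essentially the same route as the paper: compute $H_0(\Omega N_*(C))$ as a quotient of the free algebra $T(N_1(C))$ by the ideal generated by $D\{\sigma\}$ for non-degenerate $2$-simplices $\sigma$, and match this against the presentation of $R[\pi_1(X)]$ coming from the $2$-skeleton via the map $[\sigma] \mapsto 1 + \{\sigma\}$ (the paper's $\phi^{-1}$ is precisely your $g_\sigma$). The one place where you are slightly more explicit than the paper is the verification, via $\Lambda^2_0$- and $\Lambda^2_2$-horn fillers, that each $g_e$ is invertible; the paper absorbs this into the statement that the left adjoint $\tau_1$ of the nerve functor applied to a Kan complex is already a groupoid, but the content is the same.
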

\begin{proof} 
We have $$H_0( \Omega N_*(C) ) = \frac{(\Omega N_*(C))_0 }{ D_{\Omega N_*(C)} ((\Omega N_*(C))_1)}.$$
The free associative algebra $(\Omega N_*(C))_0$ has an $R$-linear basis given by monomials $\{ \bar\sigma_1 | ... | \bar\sigma_n \}$ where  each $\sigma_i$ is a non-degenerate $1$-simplex in $X$ and $\bar\sigma_i$ denotes its class in $N_1(C)=N_1(RX)$. The free multiplication of $(\Omega N_*(C))_0$ is then given by concatenation of monomials. The quotient relation on $(\Omega N_*(C))_0$  that yields the $0$-th homology $R$-algebra is generated by declaring $D_{\Omega N_*(C)} \{ \bar x \}$ to be zero for any non-degenerate $x \in X_2$, or equivalently, by the equivalence relation $$\{ \overline{d_2(x)} | \overline{d_0 (x)}\} \sim -\{\overline{d_0(x)}\} +\{\overline{d_1(x)}\} - \{\overline{d_2(x)}\},$$ where $d_i: X_2 \to X_1$ is the $i$-th face map for $i=0,1,2$.

On the other hand, $R[\pi_1(X)]$, is freely generated as an $R$-module by the elements of $\pi_1(X)$. The group $\pi_1(X)$ may be identified with the groupoid with a single object $\tau_1(X)$, where $\tau_1: \textbf{sSet} \to \textbf{Cat},$
 is the left adjoint of the nerve functor. Recall that $\tau_1(X)$ is given by imposing an equivalence relation on the monoid freely generated by $X_1$. The equivalence relation is generated by declaring $$ \sigma_2 \cdot \sigma_0 \sim' \sigma_1$$ for $\sigma_i \in X_1, i=0,1,2$, if and only if there exists some $x \in X_2$ with $d_i(x)=\sigma_i$ for $i =0,1,2$. For any $\sigma \in X_1$ we denote by $[\sigma]$ the $\sim'$-equivalence class of $\sigma$ in $\pi_1(X)$. The unit of the group algebra $R[\pi_1(X)]$ corresponds to $1_{R} [s_0(*)]$, where $*$ denotes the single vertex of $X$ and $s_0: X_0 \to X_1$ the degeneracy map.

Define a map of $R$-algebras $$\phi: (\Omega N_*(C))_0 \to R[\pi_1(X)]$$
by setting $$\phi: 1_{R} \mapsto 1_{R}[s_0(*)]$$ and
$$\phi: \{\bar\sigma\} \mapsto [\sigma] - 1_{R}[s_0(*)]$$ for any non-degenerate $1$-simplex $\sigma \in X_1$ and then extending $\phi$ as an algebra map to monomials of arbitrary length in $(\Omega N_*(C))_0 $.
We check $\phi$ preserves equivalence relations so it induces a well defined map on homology, which we also denote by $$\phi: H_0(\Omega N_*(C)) \to R[\pi_1(X)].$$ This follows from the computation 
\begin{align*}
&\phi( \{\overline{d_2(x)} | \overline{d_0 (x)}\} + \{\overline{d_0(x)}\} - \{\overline{d_1(x)}\} + \{\overline{d_2(x)}\} ) &=\\
&( [d_2(x)]-1_{R}[s_0(*)])  \cdot ([ d_0 (x)]-1_{R}[s_0(*)]) + [d_0(x)] - & \\ & 1_{R}[s_0(*)] - 
 [d_1(x)]+1_{R}[s_0(*)] +[d_2(x)]- 1_{R}[s_0(*)] &=\\
&[d_2(x)] \cdot [d_0(x) ] -[d_1(x)].&
\end{align*}
The map $ \phi: H_0( \Omega N_*(C)) \to R[\pi_1(X)]$ is clearly an isomorphism of algebras. In fact, the inverse $$\phi^{-1}: R[\pi_1(X)] \to H_0 (\Omega N_*(C))$$ is determined by  
$$\phi^{-1}:1_{R} [s_0(*)] \mapsto 1_{R}$$ 
and
$$\phi^{-1}: [\sigma] \mapsto \{\bar\sigma\} + 1_{R}$$
for any non-degenerate $\sigma \in X_1$. 
Recall that the group algebra $R[\pi_1(X)]$ has a cocommutative coproduct $$\nabla: R[\pi_1(X)] \to  R[\pi_1(X)]\otimes R[\pi_1(X)]$$ determined by setting $$\nabla(g) := g \otimes g$$ for any $g \in \pi_1(X) $. The counit  $\epsilon: R[\pi_1(X)] \to R$ is determined by setting $\epsilon(g):=1_{R}$ for any $g \in \pi_1(X)$. Moreover,  the algebra $R[\pi_1(X)]$ together with the  coproduct $\nabla$ is a cocommutative Hopf algebra with antipode induced by the inverse map $g \mapsto g^{-1}$. Thus $H_0(\Omega N_*(C))$ inherits a cocommutative Hopf algebra structure with coproduct $$\nabla_0:  H_0(\Omega N_*(C)) \to H_0(\Omega N_*(C)) \otimes H_0(\Omega N_*(C))$$ given by
$$\nabla_0:=(\phi^{-1} \otimes \phi^{-1}) \circ \nabla \circ \phi.$$ In fact, the coproduct $\nabla_0:  H_0(\Omega N_*(C)) \to H_0(\Omega N_*(C)) \otimes H_0(\Omega N_*(C))$ is induced from a coproduct $$\nabla_0: (\Omega N_*(C))_0 \to ( \Omega N_*(C))_0  \otimes ( \Omega N_*(C))_0 $$ before passing to homology, which is determined by the formula 
$$ \nabla_0(\alpha)= \alpha \otimes \alpha + 1_{R} \otimes \alpha + \alpha \otimes 1_{R}$$
for any generator $\alpha \in ( \Omega N_*(C))_0$, i.e. any $\alpha=\{\bar\sigma_1| ... |\bar\sigma_n\}$, where each $\sigma_i \in X_1$ is a non-degenerate $1$-simplex. 
\end{proof}

\begin{remark} If $X$ is an arbitrary reduced simplicial set (not necessarily a Kan complex) and $C=RX$, we may recover the fundamental group ring of $X$ from $C$ by incorporating the derived localization of \cite{CHL18} as follows. The same formula for $\nabla_0$ given in the previous proof induces a natural bialgebra structure on $H_0(\Omega N_*(C))$ (this construction did not use the Kan property of $X$). In this case, $H_0(\Omega N_*(C))$ may not be isomorphic as an algebra  to the fundamental group algebra $R[\pi_1(|X|)]$ of the geometric realization of $X$. However, we may consider the derived localization of the dg algebra $\Omega N_*(C)$ at the set of cycles $S \subset ( \Omega N_*(C))_0 $ given by the group like elements of $$\nabla_0:( \Omega N_*(C))_0 \to ( \Omega N_*(C))_0  \otimes ( \Omega N_*(C))_0 . $$ This yields a new dg algebra $\Omega N_*(C)[S^{-1}]$ which is quasi-isomorphic to the chains on the based loop space of $|X|$. Hence $H_0(\Omega N_*(C)[S^{-1}])$ is isomorphic to the fundamental group algebra of $|X|.$ When $X$ is a reduced Kan complex, there is no need to preform this derived localization to obtain the fundamental group algebra. The idea of localizing the cobar construction at a basis of degree $0$-elements, to address the non-simply connected case, was originally treated in \cite{HT10} and briefly described in \cite{Ko09}. Furthermore, the relationship between the  localized cobar construction and the chains on Kan's ``loop group" construction is explained in \cite{HT10}. We expect these constructions to be useful when describing small algebraic models for non-simply connected homotopy types. 
\end{remark}
\subsection{The fundamental bialgebra}\label{sec:coalgebrastructureoncobar}
We now describe how to associate a cocommutative bialgebra to any abstract connected simplicial commutative coalgebra $C$ in such a way that we recover the discussion above when $C=RX$ for some reduced Kan complex $X$.

Let $C$ be a connected simplicial cocommutative coalgebra $C$. Recall that we denote the coproduct $\Delta_1: C_1 \to C_1 \otimes C_1$ by 
\begin{equation}\label{sweedler}
x \mapsto \sum_{(x)} \widetilde{x} \otimes \overline{x}.
\end{equation} Since the degenercy map $s_0: C_0 \to C_1$ is a coalgebra map, $\Delta_1: C_1 \to C_1 \otimes C_1$ induces a well defined coproduct
$$\Delta_1: N_1(C) \to N_1(C) \otimes N_1(C)$$ for which we use exactly the same Sweedler type notation as in \ref{sweedler} above. 

Define a coproduct
$$\nabla^C_0: (\Omega N_*(C))_0 \to (\Omega N_*(C))_0 \otimes (\Omega N_*(C))_0$$ on the  degree zero elements of the cobar construction of $N_*(C)$ as follows.
On the unit $1 \in (\Omega N_*(C))_0$ we let $$\nabla^C_0(1) := 1_{R} \otimes 1_{R}$$ and on any $\{ x \} \in (\Omega N_*(C))_0$, where $x \in N_1(C)$, we define
$$
\nabla^C_0(\{ x \}):= \sum_{(x)} \{ \widetilde{x} \}  \otimes \{ \overline{x} \} + 1_{R} \otimes \{ x\} + \{x \} \otimes 1_{R}.
$$
Then extend $\nabla^C_0$ as an algebra map to monomials in  $(\Omega N_*(C))_0$ of arbitrary length. 

It is clear that $\nabla^C_0$ is natural, coassociative, cocommutative, and counital with counit given by the map
$$(\Omega N_*(C) )_0 = R \oplus T^{>0}s^{-1} (N_1(C)) \to R$$
which is the identity on the first summand and zero everywhere else. When $C$ is clear from the context we write $\nabla_0^C=\nabla_0.$

We now prove that the algebraic structure used to determine the fundamental group in Theorem \ref{adamscobar} is completely determined by the $\Omega$-quasi-isomorphism type of the simplicial cocommutative coalgebra of chains. 

\begin{proposition} \label{coproductonH_0}
Let $C$ be a connected simplicial cocommutative coalgebra. The coproduct $\nabla_0$ defined above induces a coalgebra structure on $H_0(\Omega N_*(C))$  making it into a cocommutative bialgebra. Furthermore, if $C=R X$ for a reduced Kan complex $X$, there is an isomorphism of bialgebras $H_0(\Omega N_*(C)) \cong R[\pi_1(X)]$.
\end{proposition}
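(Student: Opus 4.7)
The plan is to decompose the statement into three tasks: establish the bialgebra axioms for $\nabla_0$ at the free-algebra level, prove descent to $H_0$, and identify the resulting bialgebra with $R[\pi_1(X)]$ when $C = RX$.

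For the first task, I would exploit the fact that $(\Omega N_*(C))_0$ is the tensor algebra $T(N_1(C))$ and that $\nabla_0$ is by definition the unique unital algebra map extending the formula on generators $\{x\}$ with $x \in N_1(C)$. Both sides of coassociativity, cocommutativity, and counitality are algebra maps out of $(\Omega N_*(C))_0$, so these identities reduce to a check on a single generator $\{x\}$; each check then follows directly from the corresponding property of the coproduct $\Delta_1$ on $N_1(C)$ (coassociativity, and cocommutativity coming from that of $C$) together with the definition of the augmentation. Bialgebra compatibility is automatic since $\nabla_0$ is an algebra map by construction.

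The main obstacle is descent. Because $C$ is connected, $\bar N_0 = 0$, so $D$ vanishes on $(\Omega N_*(C))_0$, and one must show
\[
\nabla_0 \bigl( D((\Omega N_*(C))_1) \bigr) \subseteq (D \otimes 1 + 1 \otimes D)\bigl(((\Omega N_*(C))^{\otimes 2})_1\bigr).
\]
Since $D$ is a derivation, $\nabla_0$ is an algebra map, and the total differential on the tensor product is a derivation, the Leibniz rule reduces the verification to the indecomposable degree-one elements $\{z\}$ with $z \in N_2(C)$. Expanding
\[
\nabla_0(D\{z\}) = \nabla_0(\{d_2 z\})\,\nabla_0(\{d_0 z\}) - \nabla_0(\{d_0 z\}) + \nabla_0(\{d_1 z\}) - \nabla_0(\{d_2 z\})
\]
and invoking the compatibility $\sum d_i \widetilde z \otimes d_i \overline z = \sum \widetilde{d_i z} \otimes \overline{d_i z}$ (which encodes that face maps are coalgebra maps) together with the coassociativity and cocommutativity of $\Delta_2$, the task is to identify this expression as the image under $D \otimes 1 + 1 \otimes D$ of an explicit chain built from $\{z\} \otimes 1$, $1 \otimes \{z\}$, terms of the form $\sum \{\widetilde z\} \otimes \{d_0 \overline z\}$ and $\sum \{d_2 \widetilde z\} \otimes \{\overline z\}$, and correction terms coming from iterated applications of $\Delta_2$ that absorb the ``cross'' products produced by multiplying the two copies of $\nabla_0$ in the first summand. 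This bookkeeping, which mirrors Alexander--Whitney-style shuffling, is the delicate combinatorial step, and I expect it to be the hardest part of the proof.

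For the final assertion, when $C = RX$ with $X$ a reduced Kan complex, I would invoke the algebra isomorphism $\phi : H_0(\Omega N_*(RX)) \xrightarrow{\cong} R[\pi_1(X)]$ of Theorem \ref{adamscobar} and show that it intertwines $\nabla_0$ with the group-like coproduct $\nabla([g]) = [g] \otimes [g]$. As both are algebra maps, the check reduces to generators $\{\sigma\}$ with $\sigma \in X_1$. Because the coproduct of $RX$ is induced by the diagonal of $X$ we have $\Delta_1(\sigma) = \sigma \otimes \sigma$, giving $\nabla_0(\{\sigma\}) = \{\sigma\} \otimes \{\sigma\} + 1 \otimes \{\sigma\} + \{\sigma\} \otimes 1$. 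Combined with $\phi(\{\sigma\}) = [\sigma] - [e]$ and $\phi(1) = [e]$, a short expansion yields $(\phi \otimes \phi)\nabla_0(\{\sigma\}) = [\sigma] \otimes [\sigma] - [e] \otimes [e] = \nabla(\phi(\{\sigma\}))$, completing the identification of bialgebras.
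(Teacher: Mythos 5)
Your plan reproduces the structure of the paper's proof: reduce the bialgebra axioms to generators using that $\nabla_0$ is an algebra map on the tensor algebra $(\Omega N_*(C))_0 = T(N_1(C))$, use the Leibniz rule to reduce descent to $H_0$ to the single-generator case $\{z\}$ with $z \in N_2(C)$, and transport the coproduct across the algebra isomorphism $\phi$ of Theorem~\ref{adamscobar}. The reduction to indecomposables is correct, and your verification that $\phi$ intertwines $\nabla_0$ with the group-like coproduct is essentially the same computation the paper relies on (the paper phrases it by noting $\Delta_n$ is group-like on simplices so the general $\nabla_0$ formula specializes to the one obtained by pushing $\nabla$ through $\phi^{-1}$ in Theorem~\ref{adamscobar}).

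The genuine gap is in the descent step. You correctly identify that you must exhibit $\nabla_0(D\{z\})$ as $(D\otimes 1 + 1\otimes D)$ of an explicit chain and that this is "the delicate combinatorial step," but you do not produce the chain, and the shape you sketch for it is not quite right. The paper's bounding element $\nabla_1(\{y\})$ has six summands: $\sum\sum \{d_2(\widetilde{\widetilde{y}}) |d_0(\overline{\widetilde{y}})\} \otimes \{\overline{y}\}$, $\{d_2(\widetilde{y})\} \otimes \{\overline{y}\}$, $\{d_0(\widetilde{y})\} \otimes \{\overline{y}\}$, $\{\widetilde{y}\} \otimes \{d_1(\overline{y})\}$, $\{y\}\otimes 1$, and $1\otimes\{y\}$. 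Your sketch includes $\{z\}\otimes 1$, $1\otimes\{z\}$, a term $\sum\{d_2\widetilde z\}\otimes\{\overline z\}$ (correct), and a term $\sum\{\widetilde z\}\otimes\{d_0\overline z\}$ (the paper has $d_1$, not $d_0$), but omits $\{d_0(\widetilde z)\}\otimes\{\overline z\}$ entirely; the "correction terms from iterated $\Delta_2$" you allude to is the crucial two-simplex summand $\sum\sum \{d_2(\widetilde{\widetilde{y}})|d_0(\overline{\widetilde{y}})\}\otimes\{\overline{y}\}$, which absorbs the cross terms of $\nabla_0(\{d_2 z\})\cdot\nabla_0(\{d_0 z\})$. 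Without nailing these terms down, the identity $(D\otimes\mathrm{id}+\mathrm{id}\otimes D)\circ\nabla_1 = \nabla_0\circ D$ cannot be verified, and that identity is the entire content of the proposition beyond formal algebra. You would need to write out $\nabla_0(D\{z\})$ in full and match it term-by-term against $(D\otimes 1 + 1\otimes D)$ of the correct $\nabla_1(\{z\})$ before the proof is complete.
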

\begin{proof}
We prove that $\nabla_0$ induces a coproduct on homology $H_0(\Omega N_*(C))$. Define a map
\[\nabla_1: (\Omega N_*(C))_1 \to (\Omega N_*(C))_0 \otimes  (\Omega N_*(C))_1 \bigoplus  (\Omega N_*(C))_1 \otimes  (\Omega N_*(C))_0 \] as follows.
On any $\{ y \} \in (\Omega N_*(C))_1$, where $y \in N_2(C)$, define
\begin{align*}
\nabla_1(\{y\}):= &\sum_{(\widetilde{y})} \sum_{(y)} \{d_2(\widetilde{\widetilde{y}}) |d_0(\overline{\widetilde{y}})\} \otimes \{\overline{y}\} +\{d_2(\widetilde{y})\} \otimes \{\overline{y}\}+ \{d_0(\widetilde{y})\} \otimes \{ \overline{y} \}+ \{ \widetilde{y}\} \otimes \{ d_1(\overline{y}) \} &\\
&+ \{ y\} \otimes 1_{R} + 1_{R} \otimes \{y\}.&
\end{align*}
Above we have denoted the coproduct $N_2(C) \to N_2(C)\otimes N_2(C)$ (induced by $\Delta_2: C_2 \to C_2 \otimes C_2$) by $y \mapsto \sum_{(y)} \widetilde{y} \otimes \overline{y}$. 

For any $\{y_1|...|y_n\} \in  (\Omega N_*(C))_1$, where each $y_i$ belongs to either $N_1(C)$ or $N(C_2)$, $\nabla_1$ is defined by extending multiplicatively, i.e. by letting
$$ \nabla_1(\{y_1|...|y_n\} ):=  \nabla_{|y_1|-1}(y_1) \cdot \nabla_{|y_2|-1}(y_2) \cdot ... \cdot \nabla_{|y_n|-1}(y_n),$$ 
where $|y_i|$ denotes the degree of $y_i$ and we have denoted by $\cdot$ the product on the tensor product of algebras $\Omega N_*(C) \otimes \Omega N_*(C).$ Then a straightforward computation yields that $$(D_{\Omega N_*(C)} \otimes id + id \otimes D_{\Omega N_*(C)}) \circ \nabla_1 = \nabla_0 \circ D_{\Omega N_*(C)},$$
where $D_{\Omega N_*(C)}$ is the cobar construction differential. This implies that $\nabla_0$ induces a well defined map on homology, which we denote by the same symbol
$$\nabla_0: H_0(\Omega N_*(C)) \to H_0(\Omega N_*(C)) \otimes H_0(\Omega N_*(C)).$$ By construction, this coproduct is a map of algebras. Hence, it defines an $R$-bialgebra structure on $H_0(\Omega N_*(C))$. In the case when $C=FX$ for a reduced Kan complex $X$ each $\Delta_n: C_n \to C_n \otimes C_n$ is group-like on the basis elements given by the simplices in $X_n$. Thus the coproduct $\nabla_0$ coincides with the corresponding one constructed in the proof of Theorem \ref{adamscobar}. 
\end{proof}

\begin{remark} The above coproduct $\nabla_0$ extends a construction of Baues described in the simply connected setting \cite{B98}. It may be also be interpreted as an explicit description of the coproduct on degree $0$ of the cobar construction of differential graded coalgebra over the surjection operad (a particular model for the $E_{\infty}$-operad).  
\end{remark}

\begin{definition} For any connected simplicial cocommutative coalgebra $C$ we call $H_0(\Omega N_*(C))$, equipped with the cocommutative bialgebra structure constructed above, the \textit{fundamental bialgebra of $C$}.
\end{definition}

\begin{remark} Theorem  \ref{adamscobar} says that for $X$ a reduced Kan complex,  $H_0(\Omega N_*(C))$ is isomorphic to  $R[\pi_1(X)]$ as a Hopf algebra, so that by applying the group-like elements functor $$\mathcal{G}: \textbf{Halg}_{R} \to \textbf{Grp}$$ from the category of Hopf algebras to the category of groups, we obtain a natural isomorphism of groups $$\mathcal{G}(H_0(\Omega N_*(C))) \cong \pi_1(X).$$ In the above isomorphism we use the fact that for any integral domain $R$ and group $G$ the set of group-like elements in $R[G]$ forms a group naturally isomorphic to $G$. Conceptually, Proposition \ref{coproductonH_0} is saying that the fundamental group of a Kan complex  is completely determined from the natural algebraic structure (simplicial cocommutative coalgebra) of the chains $C=RX$ and that if $C'$ is any other connected simplicial cocommutative coalgebra which is $\Omega$-quasi-isomorphic to $C$ then there is an isomorphism $\mathcal{G}(H_0(\Omega N_*(C'))) \cong \pi_1(X)$. Hence, the fundamental group is determined by the quadratic equation $$\nabla(\alpha)= \alpha \otimes \alpha,$$ for $\alpha \in H_0(\Omega N_*(C))$. 
\end{remark}

\subsection{The fundamental simplicial twisting cochain}\label{subsec:Twistingmorphismcomingfromthecobarconstruction} 
We can consider the cocommutative bialgebra $H_0(\Omega N_*(C))$  described above as a simplicial cocommutative bialgebra by placing  $H_0(\Omega N_*(C))$ in each degree and defining the face and degeneracy maps to be the identity maps. For notational simplicity we denote this simplicial cocommutative bialgebra by $\pi(C)$. The construction of $\pi(C)$ is natural with respect to maps of connected simplicial cocommutative coalgebras and consequently induces a functor
$$\pi: \textbf{scCoalg}_{R}^0 \to \textbf{scBialg}_{R},$$
 where $\textbf{scCoalg}_{R}^0$ denotes the category of connected simplicial cocommutative $R$-coalgebras and $\textbf{scBialg}_{R}$ the category of simplicial cocommutative (possibly non-commutative) $R$-bialgebras. We define a simplicial coalgebra twisting cochain from $C$ to $\pi(C)$ as follows.

\begin{proposition}
For any connected simplicial cocommutative coalgebra $C$ there is a simplicial coalgebra twisting cochain
\[
\tau:C_n \rightarrow \pi(C)_{n-1}
\]
for $n \geq 1$, defined by 
\[
\tau(x):=[\{d_2 ... d_n (x)\}]+\epsilon(x) e,
\]
where $[\{d_2 ... d_n (x)\}] \in H_0(\Omega N_*(C))$ denotes the homology class of $\{d_2...d_n(x)\} \in( \Omega N_*(C))_0$, $\epsilon(x)$ denotes the counit applied to $x$, and $e$ is the unit element of $\pi(C)$. 
\end{proposition}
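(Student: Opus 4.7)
The plan is to verify the four simplicial twisting cochain axioms from Definition \ref{def:MCequationcoalg} together with the coalgebra condition $\Delta\tau=(\tau\otimes\tau)\Delta$. Because the face and degeneracy operators on the discrete simplicial bialgebra $\pi(C)$ are identities, axioms (1) and (3) reduce respectively to $\tau d_j=\tau$ for $j\geq 2$ and $\tau s_j=\tau$ for $j\geq 1$. The first step is to observe that $d_2\cdots d_n\colon C_n\to C_1$ is the simplicial realization of the order-preserving injection $[1]\hookrightarrow[n]$ with image $\{0,1\}$. Since the coface $\partial_j$ fixes $0$ and $1$ whenever $j\geq 2$, and the codegeneracy $\sigma_j$ fixes $0$ and $1$ whenever $j\geq 1$, the compositions $d_2\cdots d_{n-1}d_j$ and $d_2\cdots d_{n+1}s_j$ realize the same injection and therefore agree with $d_2\cdots d_n$ as maps into $C_1$. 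Combined with $\epsilon d_j=\epsilon=\epsilon s_j$, this yields axioms (1) and (3). Axiom (4) has a different flavor: since $\sigma_0$ collapses vertices $0$ and $1$, the element $d_2\cdots d_{n+1}s_0\bar x\in C_1$ lies in the image of $s_0\colon C_0\to C_1$ and hence vanishes in $N_1(C)$; therefore $\tau s_0\bar x=\epsilon(\bar x)e$, and axiom (4) collapses to the counit identity $\sum_{(x)}\epsilon(\bar x)\widetilde{x}=x$.

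The main content, and where I expect the main obstacle, is axiom (2): $\tau d_1(x)=\sum_{(x)}\tau(\widetilde{x})\cdot\tau d_0(\bar x)$. The key idea is to single out the $2$-face $y:=d_3d_4\cdots d_n x\in C_2$ of $x\in C_n$ on the vertices $\{0,1,2\}$. A direct computation in $\mathbf{\Delta}$ identifies its three edges with the edges of $x$ appearing in the formula for $\tau$:
\[
d_2 y=d_2\cdots d_n x,\qquad d_1 y=d_2\cdots d_{n-1}d_1 x,\qquad d_0 y=d_2\cdots d_{n-1}d_0 x.
\]
Moreover, since $d_3\cdots d_n$ is a morphism of coalgebras, $\Delta_2 y=\sum_{(x)}d_3\cdots d_n\widetilde{x}\otimes d_3\cdots d_n\bar x$ in Sweedler notation indexed by $\Delta_n x$. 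Applying the cobar differential to the degree-one generator $\{y\}$ and setting it to zero in $H_0$ produces, by the calculation already carried out in the proof of Theorem \ref{adamscobar}, the relation
\[
[\{d_1 y\}]=[\{d_0 y\}]+[\{d_2 y\}]+\sum[\{d_2\widetilde{y}\mid d_0\bar y\}]
\]
in $H_0(\Omega N_*(C))$. Substituting the identifications above on the left and expanding $\sum_{(x)}\tau(\widetilde{x})\cdot\tau d_0(\bar x)$ on the right via the counit identities $\sum\epsilon(\widetilde{x})\bar x=\sum\epsilon(\bar x)\widetilde{x}=x$ and $\sum\epsilon(\widetilde{x})\epsilon(\bar x)=\epsilon(x)$ matches the two sides term by term.

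Finally, to verify the coalgebra property $\Delta\tau=(\tau\otimes\tau)\Delta$, I would set $z:=d_2\cdots d_n x$ and use that $d_2\cdots d_n$ is a coalgebra morphism to rewrite $\Delta_1 z$ in terms of $\Delta_n x$. The definition of the bialgebra coproduct $\nabla_0^C$ from Subsection \ref{sec:coalgebrastructureoncobar} then gives
\[
\nabla_0^C([\{z\}])=\sum_{(x)}[\{d_2\cdots d_n\widetilde{x}\}]\otimes[\{d_2\cdots d_n\bar x\}]+1\otimes[\{z\}]+[\{z\}]\otimes 1,
\]
while a direct expansion of $(\tau\otimes\tau)\Delta_n x$, together with $e=1$ and the same counit identities, produces the same expression plus $\epsilon(x)(e\otimes e)$; this extra term matches $\epsilon(x)\nabla_0^C(e)$ on the left. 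All of the verifications apart from axiom (2) are formal consequences of simplicial identities and counit axioms; axiom (2) is precisely the statement that the degree-one boundary relations of the cobar complex encode the simplicial $2$-cocycle condition governing concatenation of edges in the bialgebra $\pi(C)$, which is why it is the essential case.
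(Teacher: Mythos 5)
Your proof is correct and takes essentially the same route as the paper: axiom (2) is established via exactly the same bounding element $\{d_3\cdots d_n(x)\}\in(\Omega N_*(C))_1$, and the remaining points (axioms (1), (3), (4) and the coalgebra condition) are handled by the same simplicial-identity and counit arguments, merely spelled out in slightly more detail than the paper, which treats axioms (1) and (3) as clear and the general-$n$ coalgebra case as immediate from the case $n=1$.
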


\begin{proof}
We show that $\tau$ satisfies the equations of Definition \ref{def:MCequationcoalg} and that it is a map of coalgebras.

It is clear that equations $1$ and $3$ of Definition \ref{def:MCequationcoalg} are satisfied, so we check equations $2$ and $4$. For equation $2$ we need to check that
\[
\tau d_1 (x)= \mu (d_0 \tau \otimes \tau d_0) \Delta_{C} (x),
\]
where $\Delta_C$ is the coproduct of $C$. Using the simplicial identities we can write the left hand side as \[\tau d_1(x)= [\{d_1 d_3 .... d_n (x) \}]+\epsilon(x) e.\] On the other hand, the right hand side is equal to 
\[
\mu (d_0 \tau \otimes \tau d_0) \Delta_{C} (x)= \sum_{(x)} [\{d_2 ...d_n (\widetilde{x})|d_0 d_3 ... d_n(\overline{x})\}] + [\{d_0 d_3 ... d_n(x)\}] + [\{d_2 ... d_n(x)\}] +\epsilon(x) e.
\]
To show that these two are equal in homology we find an $\alpha \in (\Omega N_*(C))_1$ bounding
\begin{equation}\label{eq:boundary}
\sum_{(x)} \{d_2 ...d_n (\widetilde{x})|d_0 d_3 ... d_n(\overline{x}) \}+ \{d_0 d_3 ... d_n(x)\} + \{d_2 ... d_n(x)\} +\epsilon(x) e -(\{d_1 d_3 .... d_n (x)\}+\epsilon(x) e).
\end{equation}
One explicit choice of such a boundary is given by $\alpha= \{d_3 ... d_n(x)\}$; a straightforward computation shows that $D_{\Omega N_*(C)} \{d_3 ... d_n(x)\}$ is exactly equation \ref{eq:boundary}, proving that $\tau$ satisfies equation 2 of Definition \ref{def:MCequationcoalg}.

We now show that $\tau$ satisfies equation $4$ of Definition \ref{def:MCequationcoalg}, i.e. 
\[
(id_C \otimes \mu)\circ(id_C\otimes \tau s_0 \otimes id_{\pi(C)})\circ (\Delta_C \otimes id_{\pi(C)})(x\otimes  g)=x\otimes g,
\]
for any $x\otimes g \in C\otimes \pi(C)$. Note that 
\begin{align}
&(id_C \otimes \mu)\circ(id_C\otimes \tau s_0 \otimes id_{\pi(C)})\circ (\Delta_C \otimes id_{\pi(C)})(x\otimes  g)&=\\
& \sum_{(x)} \widetilde{x} \otimes \tau s_0(\overline{x}) g &= \\
 & \sum_{(x)} \widetilde{x} \otimes [\{d_2 ... d_{n+1}s_0(\overline{x})| g\}] +  \sum_{(x)} \widetilde{x} \epsilon(\overline{x}) \otimes g.&
\end{align}
 Using the simplicial identities we see that $d_2 ... d_{n+1}s_0(\bar{x})=s_0d_1...d_n(\bar{x})$, which is degenerate and therefore zero in the normalized chain complex $N_*(C)$. We are left with  $\sum_{(x)} \widetilde{x} \epsilon(\bar{x}) \otimes g$, but since $\epsilon$ is the counit and $e$ is the unit, this is exactly $x \otimes g$. 

We now prove that $\tau:C_1 \rightarrow \pi(C)_0$ is a coalgebra map. First note  $\tau:C_1 \rightarrow \pi(C)_0=H_0(\Omega N_*(C))$ is the sum of the projection map and the counit times the unit element. The projection map from $C_1$ to $H_0(\Omega N_*(C))$ is given by sending an element $x \in C_1$ to the homology class $[\{x\}] \in H_0(\Omega N_*(C)) $.

We verify  $$\nabla_0(\tau(x))=(\tau \otimes \tau)\Delta_1(x),$$ where $\Delta_1: C_1 \rightarrow C_1 \otimes C_1$ is the coproduct of $C_1$ and $\nabla_0:\pi(C) \rightarrow \pi(C) \otimes \pi(C)$ is the coproduct from Section \ref{sec:coalgebrastructureoncobar}. The right hand side is equal to
 \begin{align}
\sum_{(x)} \tau(\widetilde{x}) \otimes \tau(\overline{x})& = \sum_{(x)} [\{\widetilde{x}\}] \otimes [\{\overline{x}\}] +  [\{\widetilde{x}\}] \otimes \epsilon(\overline{x}) e + \epsilon(\widetilde{x}) e \otimes [\{\overline{x}\}]  + \epsilon(\widetilde{x}) e \otimes \epsilon(\overline{x}) e \\
&= \sum_{(x)} [\{\widetilde{x}\}] \otimes [\{\overline{x}\}] +  [\{x\}] \otimes  e + e \otimes [\{x\}]+ \epsilon(x) e \otimes  e,
\end{align}
where in the second line we used that $\epsilon$ is a counit. This is exactly the left hand side, since by the definition of $\nabla_0$ we have
\begin{align}
\nabla_0(\tau(x)) &=\nabla_0([x]) + \nabla_0(\epsilon(x)e) \\
&= \sum_{(x)} [\{\widetilde{x}\}] \otimes  [\{\overline{x}\}] +[\{x\}] \otimes e + e \otimes [\{x\}] + \epsilon(x)e \otimes e.
\end{align}
It now follows immediately that for each $n>1$ the map $\tau: C_n \to \pi(C)_{n-1}$ is a coalgebra map.
\end{proof}

\begin{definition} We call the map $\tau: C\to \pi(C)$, defined in the above proposition, the \textit{fundamental simplicial twisting cochain of $C$}.
\end{definition} 

\subsection{The universal cover}
We now assemble a new simplicial cocommutative coalgebra from a connected simplicial cocommutative coalgebra $C$ together with $\pi(C)$ and $\tau: C\to \pi(C).$

\begin{definition}
Let $C$ be a connected simplicial cocommutative coalgebra and $\tau:C \rightarrow \pi(C)$ the fundamental simplicial twisting cochain described in the previous section. The \textit{universal cover of C} is the simplicial cocommutative coalgebra defined as the simplicial twisted tensor product $C \otimes_\tau \pi(C)$ with respect to $\tau:C \rightarrow \pi(C)$ equipped with the coproduct described in Proposition \ref{coalgebraontwistedproduct}. Denote the universal cover of $C$ by $\widetilde{C}=C \otimes_\tau \pi(C)$. The universal cover is clearly a functorial construction with respect to maps of connected simplicial cocommutative coalgebras and so it defines a functor
$$\widetilde{ }: \textbf{scCoalg}^0_{R} \to \textbf{scCoalg}_{R}.$$ Note that $\widetilde{C}$ has a natural right $\pi(C)$-module structure. 
\end{definition}

When $C=RX$ for a reduced Kan complex $X$, then $\widetilde{C}$ is exactly the simplicial cocommutative coalgebra of chains on the universal cover $\widetilde{X}$. This is stated in the following proposition, which is a straightforward consequence of Corollary \ref{cor:universalcover}.

\begin{proposition}\label{universalcovercommutes}
If $X$ is a reduced Kan complex, then there is an isomorphism of simplicial cocommutative coalgebras between $R \widetilde{X}$ and $\widetilde{R X}$.
\end{proposition}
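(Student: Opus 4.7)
The plan is to construct the isomorphism directly by combining Corollary \ref{cor:universalcover} with the identification of the fundamental bialgebra provided by Theorem \ref{adamscobar}. By Corollary \ref{cor:universalcover}, there is already a natural isomorphism of simplicial cocommutative coalgebras $R\widetilde{X} \cong RX \otimes_{\tau} RG$, where $G = \pi_1(X)$ is the fundamental group viewed as a discrete simplicial group (so that $RG$ is constant in the simplicial direction with identity face and degeneracy maps), and $\tau = Rt$ for the twisting morphism $t(x) = [d_2 \cdots d_n(x)] \in \pi_1(X)$. On the other hand, by definition $\widetilde{RX} = RX \otimes_{\tau'} \pi(RX)$, where $\pi(RX)$ is the fundamental bialgebra $H_0(\Omega N_*(RX))$ regarded as a constant simplicial cocommutative bialgebra, and $\tau'(x) = [\{d_2 \cdots d_n(x)\}] + \epsilon(x) e$ is the fundamental simplicial twisting cochain.

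The key ingredient is the isomorphism of cocommutative Hopf algebras $\phi : H_0(\Omega N_*(RX)) \xrightarrow{\cong} R[\pi_1(X)]$ built in Theorem \ref{adamscobar}. Since both $\pi(RX)$ and $RG$ are constant in the simplicial direction, $\phi$ upgrades to an isomorphism of simplicial cocommutative bialgebras $\Phi : \pi(RX) \to RG$. The central point is the compatibility $\Phi \circ \tau' = \tau$ of simplicial twisting cochains. For a basis element $x \in X_n \subset (RX)_n$ we have $\epsilon(x) = 1$, so $\tau'(x) = [\{d_2 \cdots d_n(x)\}] + e$. Using that $\phi$ is determined by $[\{\bar\sigma\}] \mapsto [\sigma] - e$ and $1_{R} \mapsto e$ (as in the proof of Theorem \ref{adamscobar}), I compute
\[
\Phi(\tau'(x)) = [d_2 \cdots d_n(x)] - e + e = [d_2 \cdots d_n(x)] = t(x) = \tau(x),
\]
and this extends to all of $RX$ by $R$-linearity.

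Equipped with this compatibility, the map $\mathrm{id}_{RX} \otimes \Phi : \widetilde{RX} = RX \otimes_{\tau'} \pi(RX) \longrightarrow RX \otimes_{\tau} RG$ is a degree-wise isomorphism of $R$-modules. Commutation with $d_i$ for $i \geq 1$ and with every $s_j$ is immediate from the formulas in Theorem-Definition \ref{prop:twistedabeliangroup}, since these maps act as tensor products of the underlying face and degeneracy maps, and $\Phi$ trivially commutes with the identity face and degeneracy maps on the two constant simplicial bialgebras. Commutation with the twisted face map $d_0$ follows directly from $\Phi \circ \tau' = \tau$. Finally, by the formula for the coproduct in Proposition \ref{coalgebraontwistedproduct}, $\mathrm{id}_{RX} \otimes \Phi$ is a coalgebra map because $\Phi$ is a coalgebra isomorphism. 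Composing with the isomorphism from Corollary \ref{cor:universalcover} produces the desired isomorphism $\widetilde{RX} \cong R\widetilde{X}$ of simplicial cocommutative coalgebras. The substantive content is packaged into Theorem \ref{adamscobar} and Corollary \ref{cor:universalcover}; the only remaining task is the bookkeeping verification that the two twisting cochains match under the Hopf algebra isomorphism $\phi$, which I expect to be the main, but rather mild, obstacle.
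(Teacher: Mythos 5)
Your proposal is correct and spells out exactly what the paper means by calling Proposition~\ref{universalcovercommutes} ``a straightforward consequence of Corollary~\ref{cor:universalcover}'': compose the isomorphism $R\widetilde{X} \cong RX \otimes_\tau RG$ from that corollary with the isomorphism of twisted tensor products induced by the Hopf algebra isomorphism $\phi\colon H_0(\Omega N_*(RX)) \xrightarrow{\cong} R[\pi_1(X)]$ from Theorem~\ref{adamscobar}. The verification $\Phi \circ \tau' = \tau$ is the right (and only) nontrivial check, and your computation of it is correct, including in the implicit edge case where $d_2\cdots d_n(x)$ is degenerate (both sides reduce to the unit).
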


 The following lemma relates the simplicial twisted tensor product $C \otimes_{\tau} \pi(C)$ to Brown's twisted tensor product in the dg setting.
 
\begin{lemma}\label{Brown}
Let $C$, $\pi(C)$, and $\tau: C \to \pi(C)$ be as above. Then the dg $R$-module of normalized chains $N_*(C \otimes_{\tau} \pi(C))$ is naturally isomorphic to the twisted tensor product  $N_*(C) \otimes_{N(\tau)} N_*(\pi(C))$ in the sense of Brown.
\end{lemma}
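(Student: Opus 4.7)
The plan is to establish the claimed isomorphism by reducing both sides to the same underlying graded $R$-module and then matching differentials. The key structural fact that drives the calculation is that $\pi(C)$ is a \emph{constant} simplicial object: all face and degeneracy maps are identities, and hence the normalized chain complex $N_*(\pi(C))$ is concentrated in degree zero, where it equals $\pi(C)_0 = H_0(\Omega N_*(C))$. On the Brown side this immediately gives $N_*(C) \otimes_{N(\tau)} N_*(\pi(C)) = N_*(C) \otimes \pi(C)_0$ as a graded module. On the simplicial side, since $s_j(x \otimes g) = s_j(x) \otimes g$ under the identity degeneracies on $\pi(C)$, an element $x \otimes g \in (C \otimes_\tau \pi(C))_n$ is degenerate if and only if $x \in C_n$ is degenerate, so $N_*(C \otimes_\tau \pi(C)) = N_*(C) \otimes \pi(C)_0$ as a graded module as well. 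The identity on graded modules is the candidate natural isomorphism.

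Next I would compute the normalized differential on $N_*(C \otimes_\tau \pi(C))$ as $\sum_{i=0}^n (-1)^i d_i^\tau$. For $i \geq 1$ we have $d_i^\tau(x \otimes g) = d_i(x) \otimes g$, so these terms contribute the ``naive'' part $\partial_C \otimes \mathrm{id} - d_0 \otimes \mathrm{id}$. The $i = 0$ term $d_0^\tau(x \otimes g) = \sum d_0(\widetilde{x}) \otimes g \cdot \tau(\overline{x})$ splits into a pure twist plus $d_0(x) \otimes g$ after substituting $\tau(y) = [\{d_2 \cdots d_n(y)\}] + \epsilon(y) e$ and applying the counit identity $\sum \widetilde{y}\, \epsilon(\overline{y}) = y$; the extra $d_0 \otimes \mathrm{id}$ cancels and one obtains
\[
\partial(x \otimes g) = \partial_C(x) \otimes g + \sum_{(x)} d_0(\widetilde{x}) \otimes g \cdot [\{d_2 \cdots d_n(\overline{x})\}].
\]

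In parallel, Brown's differential on $N_*(C) \otimes_{N(\tau)} N_*(\pi(C))$ is $\partial_C \otimes \mathrm{id}$ plus a twist term built from $N(\tau)$ and the Alexander--Whitney coproduct $\delta$ on $N_*(C)$. Because $N_*(\pi(C))$ is concentrated in degree zero, only the summand of $\delta$ with $|x''| = 1$ contributes, reducing the twist to a single closed expression directly comparable with the one produced above. Matching the two expressions then proceeds by combining the cocommutativity of the simplicial coproduct $\Delta$ on $C$ with the standard simplicial identities relating the iterated front and last face operators appearing in the AW formula. Naturality in $C$ is automatic because every step is functorial in morphisms of connected simplicial cocommutative coalgebras.

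I expect the main obstacle to be this final combinatorial matching. The simplicial formula naturally features the ``first edge'' $d_2 \cdots d_n$ inherited from the definition of the fundamental twisting cochain, while the AW coproduct produces the front and last face operators $f_p(\widetilde{x}) \otimes l_q(\overline{x})$, so the identification requires careful sign and convention bookkeeping together with cocommutativity of $\Delta$. This is essentially an instance of the classical Szczarba--Hirsch--Brown theorem identifying the normalized chains of a twisted Cartesian product of simplicial sets with a Brown twisted tensor product, specialized here to the setting in which the ``fiber'' is a constant simplicial bialgebra; an alternative, more conceptual route is to apply the basic perturbation lemma to the Eilenberg--Zilber equivalence between $N_*(C \otimes \pi(C))$ and $N_*(C) \otimes N_*(\pi(C))$ and verify that the resulting perturbed differential coincides with the one produced by $N(\tau)$.
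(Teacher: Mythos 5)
Your proposal follows the same route as the paper's proof: observe that $\pi(C)$ is a constant simplicial object so $N_*(\pi(C))$ is concentrated in degree zero, hence both sides share the underlying graded module $N_*(C) \otimes \pi(C)_0$; then compare differentials, expanding $\sum_{i}(-1)^i d_i^\tau$ via the counit identity to obtain $\partial_C \otimes \mathrm{id}$ plus the twist term $\sum_{(x)} d_0(\widetilde{x}) \otimes g \cdot [\{d_2\cdots d_n(\overline{x})\}]$. This is exactly the computation the paper carries out.

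You are right to flag the final combinatorial matching as the delicate point, and in fact the paper handles it even more lightly than your outline suggests is required: it simply asserts that this same expression is the twist term in Brown's differential for $N(\tau)$, without deriving it from the general formula, which goes through the Alexander--Whitney coproduct $\delta$ on $N_*(C)$ rather than the simplicial coproduct $\Delta$. Writing the general formula out, only the bidegree $(n-1,1)$ component of $\delta$ contributes, which a priori produces the ``last edge'' operator $d_0^{n-1}$ rather than the ``first edge'' $d_2\cdots d_n$, applied to the other tensor factor, with $g$ multiplied on the opposite side and a sign $(-1)^{n-1}$. Reconciling these does depend, as you anticipate, on the cocommutativity of $\Delta$ (to exchange $\widetilde{x}$ and $\overline{x}$), together with careful conventions for the twist sign and the side of the module action --- bookkeeping that neither your sketch nor the paper's proof makes explicit. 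So your ``main obstacle'' is a real one that the paper does not resolve on the page; your perturbation-lemma alternative would be a genuinely different and arguably more robust route, but the paper does not take it.
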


\begin{proof}
The normalized chain complex $N_*(\pi(C))$ is a dg algebra concentrated in degree $0$. In fact, by definition $N_0(\pi(C)) =H_0(\Omega N_*(C))$.
Also note that, for any $n \geq 0$, we have $N_n(C \otimes \pi(C)) \cong N_n(C) \otimes H_0(\Omega N_*(C))$. Hence, we have a natural isomorphism of graded $R$-modules  $N_*(C \otimes_{\tau} \pi(C))\cong N_*(C) \otimes_{N(\tau)} N_*(\pi(C))$. To see that the differentials are the same, first observe that Brown's twisting cochain $N(\tau): N_n(C) \to N_{n-1}(\pi(C))$ is only non-zero for $n=1$ where it is given by the projection map $$N(\tau)(y) = [ \{y\} ] \in H_0(\Omega N_*(C)),$$ for any $y \in N_1(C)$. Then the twisting term in the differential of Brown's twisted tensor product is a map $d_{N(\tau)}: N_*(C) \otimes_{N(\tau)} H_0(\Omega N_*(C)) \to N_*(C) \otimes_{N(\tau)} H_0(\Omega N_*(C)) $ given by 
$$d_{N(\tau)}(x \otimes g) = \sum_{(x)} d_0(\widetilde{x}) \otimes g \cdot [\{d_2 ... d_n (\overline{x})\}]\ .$$
Thus, for any $x \otimes g \in N_n(C) \otimes_{N(\tau)} H_0(\Omega N_*(C))$, the differential of the Brown twisted tensor product is given by 
$$(d_C \otimes id + d_{N(\tau)})(x \otimes g)= \sum_{i=0}^n (-1)^id_i(x) \otimes g + \sum_{(x)} d_0(\widetilde{x}) \otimes g \cdot [\{d_2 ... d_n (\overline{x})\}],$$
which is precisely $\sum_{i=0}^n (-1)^i d_i^{\tau}(x \otimes g)$, the differential of $N_*(C \otimes_\tau \pi(C))$. 
\end{proof}
\begin{remark}
The above lemma does not hold in general if we replace $\pi(C)$ with an arbitrary simplicial algebra. If the non-degenerate simplicies of $A$ are not concentrated in degree $0$, there is no isomorphism $N_*(C \otimes A) \cong N_*(C) \otimes N_*(A)$; this can be solved using a cubical version of the twisted product, as described in \cite{KS05}. See  \cite{S61} for a more general statement regarding relation between the chains on a twisted Cartesian product and Brown's twisted tensor products. 
\end{remark}

Any $\Omega$-quasi-isomorphism of connected simplicial cocommutative coalgebras induces a weak equivalence between universal covers as we now show.

\begin{theorem}\label{universalcoverfunctor}
 The universal cover  functor $$\widetilde{ }: \textbf{scCoalg}^0_{R} \to \textbf{scCoalg}_{R}$$ sends $\Omega$-quasi-isomor\-phisms  between simplicial cocommutative $R$-flat coalgebras to  quasi-isomorphisms.
\end{theorem}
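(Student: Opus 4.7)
The plan is to reduce the question to the dg setting and then invoke Theorem \ref{invarianceofbrown}, the invariance of Brown's twisted tensor product. Throughout, let $f: C \to C'$ be an $\Omega$-quasi-isomorphism between $R$-flat connected simplicial cocommutative coalgebras. I would first observe that $\Omega N_*(f): \Omega N_*(C) \to \Omega N_*(C')$ is a quasi-isomorphism of dg associative $R$-flat algebras, so in particular $\pi(f) := H_0(\Omega N_*(f)): \pi(C) \to \pi(C')$ is an isomorphism of bialgebras.

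Next, I would invoke Lemma \ref{Brown} to identify $N_*(\widetilde C) \cong N_*(C) \otimes_{N(\tau)} \pi(C)$, where $\pi(C)$ is viewed as a dg module concentrated in degree $0$. The key observation is that the twisting cochain $N(\tau): N_*(C) \to \pi(C)$ factors as the universal twisting cochain $\iota: N_*(C) \to \Omega N_*(C)$ followed by the canonical projection $p_C: \Omega N_*(C) \twoheadrightarrow H_0(\Omega N_*(C)) = \pi(C)$. Moreover, the $\Omega N_*(C)$-module structure on $\pi(C)$ is precisely the one factoring through $p_C$. Consequently, Brown's twisted tensor product with respect to $N(\tau)$ coincides with the one with respect to $\iota$, yielding a natural identification $N_*(\widetilde C) \cong N_*(C) \otimes_\iota \pi(C)$, and similarly for $C'$.

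The final step is to factor $N_*(\widetilde f) = N_*(f) \otimes \pi(f)$ through an intermediate complex:
\[
N_*(C) \otimes_\iota \pi(C) \xrightarrow{\;\mathrm{id} \otimes \pi(f)\;} N_*(C) \otimes_\iota (\Omega N_*(f))^* \pi(C') \xrightarrow{\;N_*(f) \otimes \mathrm{id}\;} N_*(C') \otimes_\iota \pi(C').
\]
The identity $p_{C'} \circ \Omega N_*(f) = \pi(f) \circ p_C$ ensures that both arrows are chain maps. The first is a chain isomorphism because $\pi(f)$ itself is an algebra isomorphism. The second is a quasi-isomorphism by Theorem \ref{invarianceofbrown} applied to the $\Omega$-quasi-iso $N_*(f)$ and the left dg $\Omega N_*(C')$-module $\pi(C')$; this is precisely where the $R$-flatness hypothesis enters. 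Composing, $N_*(\widetilde f)$ is a quasi-isomorphism, which is by definition the condition for $\widetilde f$ to be a quasi-isomorphism of simplicial coalgebras.

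The main subtlety will be the careful identification of $N_*(\widetilde C)$ with $N_*(C) \otimes_\iota \pi(C)$ as chain complexes (rather than merely as graded modules), namely matching the twisted differential in Brown's construction with the alternating sum of twisted face maps on the simplicial twisted tensor product. This was essentially carried out in the proof of Lemma \ref{Brown}; one need only reinterpret it through $\iota$ and the projection $p_C$. The compatibility between the pulled-back $\Omega N_*(C)$-action on $\pi(C')$ and the action induced by $\pi(f)$ is similarly a matter of unwinding definitions, and I do not anticipate any serious obstacle beyond diligent bookkeeping.
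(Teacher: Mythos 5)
Your proof is correct and follows essentially the same route as the paper: reduce to the dg setting via Lemma \ref{Brown} and apply Theorem \ref{invarianceofbrown}. The only difference is that you make explicit the factorization $N(\tau) = p_C \circ \iota$ and the intermediate chain isomorphism $\mathrm{id}\otimes\pi(f)$, which the paper's proof passes over implicitly when stating that Theorem \ref{invarianceofbrown} directly yields a quasi-isomorphism of the $\otimes_{N(\tau)}$ twisted tensor products; this is a mild gain in precision rather than a different argument.
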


\begin{proof}
Let $f: C \to C'$ be an $\Omega$-quasi-isomorphism between two connected simplicial cocommutative $R$-flat coalgebras. Then $\Omega N_*(f): \Omega N_*(C) \to \Omega N_*(C')$ is a quasi-isomorphism of dg $R$-flat algebras. In particular, $H_0( \Omega N_*(f)): H_0(\Omega N_*(C)) \to H_0(\Omega N_*(C'))$ is an isomorphism of algebras (in fact, of bialgebras). By Theorem \ref{invarianceofbrown}, $f$ induces a quasi-isomorphism of chain complexes
$$N_*(f) \otimes H_0(\Omega N_*(f)): N_*(C) \otimes_{N(\tau)} H_0(\Omega N_*(C)) \to N_*(C') \otimes_{N(\tau)} H_0(\Omega N_*(C'))$$ 
between Brown twisted tensor products. Lemma \ref{Brown} then implies $$N_*(f \otimes \pi(f)): N_*(C \otimes_\tau \pi(C)) \to N_*(C' \otimes_\tau \pi(C'))$$ is a quasi-isomorphism.

\end{proof}

\section{Main theorem}

Using the machinery developed in the previous sections, together with Theorem C of \cite{G95}, we prove the following.

\begin{proposition} \label{algclosed}
Let $\mathbb{E}$ be an algebraically closed field and $X$ and $Y$ two reduced Kan complexes. If the connected simplicial cocommutative coalgebras of chains $\mathbb{E}X$ and $\mathbb{E}Y$ are $\Omega$-quasi-isomorphic then $X$ and $Y$ are  $\pi_1$-$\mathbb{E}$-equivalent.
\end{proposition}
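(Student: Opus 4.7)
The plan is to translate the given algebraic input into topological data via the machinery of Section 5, then use Goerss's classification of simplicial cocommutative coalgebras \cite{G95} together with fiberwise $\mathbb{E}$-localization of the universal cover fibration to upgrade to a zig-zag of $\pi_1$-$\mathbb{E}$-equivalences between $X$ and $Y$. First I would apply the fundamental bialgebra functor $\pi=H_0\circ \Omega \circ N_*$ to the given zig-zag of $\Omega$-quasi-isomorphisms between $\mathbb{E}X$ and $\mathbb{E}Y$. Each arrow induces an isomorphism on $H_0$ of the cobar construction, so the zig-zag produces an isomorphism of cocommutative bialgebras $\pi(\mathbb{E}X)\cong\pi(\mathbb{E}Y)$. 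Theorem \ref{adamscobar} identifies these with the Hopf algebras $\mathbb{E}[\pi_1(X)]$ and $\mathbb{E}[\pi_1(Y)]$, and applying the group-like elements functor yields an isomorphism $\pi_1(X)\cong \pi_1(Y)$; denote the common group by $G$.

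Next I would apply the universal cover functor to the same zig-zag. Theorem \ref{universalcoverfunctor} (together with the fact that every coalgebra over the field $\mathbb{E}$ is automatically $\mathbb{E}$-flat) converts it into a zig-zag of quasi-isomorphisms of simplicial cocommutative $\mathbb{E}$-coalgebras, and Proposition \ref{universalcovercommutes} identifies its endpoints as $\mathbb{E}\widetilde{X}$ and $\mathbb{E}\widetilde{Y}$. Moreover, naturality of the right $\pi(C)$-action on $\widetilde{C}$ implies that this zig-zag is equivariant for the simplicial $\mathbb{E}[G]$-action, where $G$ is identified across the intermediate coalgebras using the isomorphisms of fundamental bialgebras from the previous step. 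Goerss's Theorem D, applied with the algebraically closed field $\mathbb{E}$, then converts this into a zig-zag of $\mathbb{E}$-equivalences of simplicial sets between $\widetilde{X}$ and $\widetilde{Y}$; since Goerss's fibrant replacement functor $\mathcal{R}$ and the point functor $\mathcal{P}$ are both functorial, this zig-zag can be taken $G$-equivariantly.

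Finally, I would descend to the base spaces using fiberwise $\mathbb{E}$-localization of the universal cover fibration. By Proposition \ref{fiberwiseequivalence}, the canonical maps $X\to E_{\mathbb{E}}(\widetilde{X})$ and $Y\to E_{\mathbb{E}}(\widetilde{Y})$ are $\pi_1$-$\mathbb{E}$-equivalences, where $E_{\mathbb{E}}(\widetilde{X})=EG\times_G L_{\mathbb{E}}\widetilde{X}$ and similarly for $Y$. The $G$-equivariant zig-zag of $\mathbb{E}$-equivalences $\widetilde{X}\simeq\widetilde{Y}$ obtained in the previous step, after applying the Borel construction $EG\times_G(-)$, yields a zig-zag of weak homotopy equivalences between $E_{\mathbb{E}}(\widetilde{X})$ and $E_{\mathbb{E}}(\widetilde{Y})$ over $BG$, and weak homotopy equivalences are in particular $\pi_1$-$\mathbb{E}$-equivalences. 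Splicing everything together gives the desired zig-zag
\[
X \longrightarrow E_{\mathbb{E}}(\widetilde{X}) \simeq \cdots \simeq E_{\mathbb{E}}(\widetilde{Y}) \longleftarrow Y.
\]

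The main obstacle is establishing genuine $G$-equivariance of the zig-zag of $\mathbb{E}$-equivalences produced by Goerss's Theorem D. The quasi-isomorphism zig-zag between $\mathbb{E}\widetilde{X}$ and $\mathbb{E}\widetilde{Y}$ is $\mathbb{E}[G]$-equivariant by the naturality of the $\pi$-action, but one must verify that the fibrant replacement in Goerss's model structure on $\textbf{scCoalg}_{\mathbb{E}}$ and the right adjoint $\mathcal{P}$ can be applied coherently so as to produce a $G$-equivariant zig-zag at the level of simplicial sets, rather than merely an equivalence of underlying simplicial sets. The functoriality of these constructions, via the small object argument used in Bousfield's and Goerss's model structures, makes this plausible, and this is the key technical juncture where the machinery of Section 5 meets Goerss's classification.
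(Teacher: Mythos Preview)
Your proposal is correct and follows essentially the same route as the paper: extract $\pi_1$ via the fundamental bialgebra, apply the universal cover functor and then $\mathcal{P}\circ\mathcal{R}$ equivariantly, Borel-construct, and splice in the fiberwise localization maps at the ends. The only minor discrepancies are that the paper invokes Goerss's Theorem~C (rather than Theorem~D) to identify $(\mathcal{P}\circ\mathcal{R})\mathbb{E}\widetilde{X}$ with $L_{\mathbb{E}}\widetilde{X}$, and it verifies directly that the end maps $\psi_X,\psi_Y$ are $\pi_1$-$\mathbb{E}$-equivalences rather than citing Proposition~\ref{fiberwiseequivalence}, which as stated does not quite deliver that claim.
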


\begin{remark}
In the proofs of Proposition \ref{algclosed} and Theorem \ref{anyfield}, we will need to use the naturality of the universal cover construction, it is therefore important that we use explicit zig-zags of equivalences and not just morphisms in the homotopy category. 
\end{remark}

\begin{proof}[Proof of Proposition \ref{algclosed}]
Suppose that $\mathbb{E}X$ and $\mathbb{E}Y$ are $\Omega$-quasi-isomorphic, so there is a zig-zag of connected simplicial cocommutative coalgebras
\begin{equation}\label{zigzag1}
\mathbb{E}X \xrightarrow{\simeq_{\Omega}}C_1 \xleftarrow{\simeq_{\Omega}} \cdots \xrightarrow{\simeq_{\Omega}} C_n \xleftarrow{\simeq_{\Omega}} \mathbb{E}Y,
\end{equation}
where each map is an $\Omega$-quasi-isomorphism. In particular, there are isomorphisms 
\begin{equation}\label{bialgebras}
H_0(\Omega N_*( \mathbb{E}X)) \cong H_0(\Omega N_*(C_1)) \cong \cdots \cong H_0(\Omega N_*(C_n)) \cong H_0(\Omega N_*( \mathbb{E}Y))
\end{equation}
 of fundamental bialgebras. Since $X$ and $Y$ are Kan complexes, Theorem \ref{adamscobar} implies that by applying the functor of group-like elements we obtain an isomorphism $$\pi_1(X)\cong \pi_1(Y) := \pi_1$$ of fundamental groups. Apply the universal cover functor \text{  }$\widetilde{ }: \textbf{scCoalg}^0_{\mathbb{E}} \to \textbf{scCoalg}_{\mathbb{E}}$ to the zig-zag in \ref{zigzag1} and obtain a zig-zag of simplicial cocommutative coalgebras
\begin{equation}\label{zigzag2}
\widetilde{\mathbb{E}X} \xrightarrow{\simeq} \widetilde{C_1} \xleftarrow{\simeq} \cdots \xrightarrow{\simeq} \widetilde{C_n} \xleftarrow{\simeq} \widetilde{\mathbb{E}Y}.
\end{equation}
Each object in the zig-zag \ref{zigzag2} has a $\pi_1$-action, each map is $\pi_1$-equivariant, and by Theorem \ref{universalcoverfunctor}, each map is a quasi-isomorphism of simplicial cocommutative coalgebras.  By Proposition \ref{universalcovercommutes}, the endpoints in zig-zag \ref{zigzag2} are naturally  $\pi_1$-equivariantly isomorphic to $\mathbb{E}\widetilde{X}$ and $\mathbb{E}\widetilde{Y}$, respectively. Thus, we get a zig-zag of $\pi_1$-equivariant quasi-isomorphisms of simplicial cocommutative $\pi_1$-coalgebras
\begin{equation}\label{zigzag3}
\mathbb{E}\widetilde{X} \xrightarrow{\simeq} \widetilde{C_1} \xleftarrow{\simeq} \cdots \xrightarrow{\simeq} \widetilde{C_n} \xleftarrow{\simeq} \mathbb{E}\widetilde{Y}.
\end{equation}
Apply to the above zig-zag of quasi-isomorphisms the composition of functors $(\mathcal{P} \circ \mathcal{R})$ where $\mathcal{R}: \textbf{sCoalg}_{\mathbb{E}} \to \textbf{sCoalg}_{\mathbb{E}}$ is a fibrant replacement functor for Goerss' model category structure on simplicial cocommutative coalgebras and $\mathcal{P}: \textbf{sCoalg}_{\mathbb{E}} \to \textbf{sSet}$ is the functor of points. We obtain a zig-zag of $\mathbb{E}$-local spaces with $\pi_1$-actions

\begin{equation}\label{zigzag4}
(\mathcal{P} \circ \mathcal{R})\mathbb{E}\widetilde{X} \xrightarrow{\simeq} (\mathcal{P} \circ \mathcal{R})\widetilde{C_1} \xleftarrow{\simeq} \cdots \xrightarrow{\simeq} (\mathcal{P} \circ \mathcal{R})\widetilde{C_n} \xleftarrow{\simeq}(\mathcal{P} \circ \mathcal{R}) \mathbb{E}\widetilde{Y}
\end{equation}
where each map is a $\pi_1$-equivariant weak homotopy equivalence. By Theorem C of \cite{G95} the endpoints of \ref{zigzag4} are the $\mathbb{E}$-localizations of the universal covers $\widetilde{X}$ and $\widetilde{Y}$, respectively. 

By adding the derived unit of adjunction ($\mathbb{E}, \mathcal{P})$ to each end of the above zig-zag, we obtain

\begin{equation}
\widetilde{X} \xrightarrow{\eta_{\widetilde{X}}} (\mathcal{P} \circ \mathcal{R})\mathbb{E}\widetilde{X} \xrightarrow{\simeq} (\mathcal{P} \circ \mathcal{R})\widetilde{C_1} \xleftarrow{\simeq} \cdots \xrightarrow{\simeq} (\mathcal{P} \circ \mathcal{R})\widetilde{C_n} \xleftarrow{\simeq}(\mathcal{P} \circ \mathcal{R}) \mathbb{E}\widetilde{Y}\xleftarrow{\eta_{\widetilde{Y}}} \widetilde{Y}.
\end{equation}

To turn this zig-zag of $\F$-equivalences between the $\pi_1$-equivariant Kan complexes $\tilde{X}$ and $\tilde{Y}$ into a zig-zag of $\pi_1$-$\F$-equivalences of Kan complexes between $X$ and $Y$, we would like to take the quotient by $\pi_1$ on every space in the zig-zag. We can not do this directly, because the derived functor of points does not necessarily produce simplicial sets with a free $\pi_1$-action. We therefore first need to apply the Borel construction to make sure that all the $\pi_1$ actions are free. After applying the Borel construction we get

\begin{equation}\label{lastzigzag} E \pi_1 \times_{\pi_1} \widetilde{X} \xrightarrow{\psi_X} E \pi_1 \times_{\pi_1} (\mathcal{P} \circ \mathcal{R})\mathbb{E}\widetilde{X}  \xrightarrow{\simeq}E\pi_1 \times_{\pi_1}  (\mathcal{P} \circ \mathcal{R})\widetilde{C_1} \xleftarrow{\simeq} \cdots 
\end{equation}
$$ \cdots \xrightarrow{\simeq} E\pi_1 \times_{\pi_1}  (\mathcal{P} \circ \mathcal{R})\widetilde{C_n} \xleftarrow{\simeq} E \pi_1 \times_{\pi_1} (\mathcal{P} \circ \mathcal{R})\mathbb{E}\widetilde{Y}  \xleftarrow{\psi_Y} E \pi_1 \times_{\pi_1} \widetilde{Y}.$$

We claim that every map in the sequence of spaces $\ref{lastzigzag}$ is a $\pi_1$-$\mathbb{E}$-equivalence. Note that because the two ends are weakly homotopy equivalent to $X$ and $Y$, respectively, this would establish a zig-zag of $\pi_1$-$\mathbb{E}$-equivalences between $X$ and $Y$. 

All of the following maps are weak homotopy equivalences and therefore $\pi_1$-$\mathbb{E}$-equivalences
\begin{equation} E \pi_1 \times_{\pi_1} (\mathcal{P} \circ \mathcal{R})\mathbb{E}\widetilde{X}  \xrightarrow{\simeq}E\pi_1 \times_{\pi_1}  (\mathcal{P} \circ \mathcal{R})\widetilde{C_1} \xleftarrow{\simeq} \cdots 
\end{equation}
$$ \cdots \xrightarrow{\simeq} E\pi_1 \times_{\pi_1}  (\mathcal{P} \circ \mathcal{R})\widetilde{C_n} \xleftarrow{\simeq} E \pi_1 \times_{\pi_1} (\mathcal{P} \circ \mathcal{R})\mathbb{E}\widetilde{Y}.$$

Therefore, we just need to argue that $\psi_X: E \pi_1 \times_{\pi_1} \widetilde{X} \to E \pi_1 \times_{\pi_1} (\mathcal{P} \circ R)\mathbb{E}\widetilde{X}$ is a $\pi_1$-$\mathbb{E}$-equivalence (the argument for $\psi_Y$ will be exactly the same). The map $\psi_X$ is the induced map on the coinvariants, forming the following commutative diagram
\[
\xymatrix{
\pi_1 \ar[r] \ar[d]_{=} & E\pi_1 \times \widetilde{X} \ar[r] \ar[d]^{id \times \eta_{\widetilde{X}}} &  E\pi_1 \times_{\pi_1} \widetilde{X} \ar[d]^{\psi_X} \\
 \pi_1 \ar[r] & E \pi_1 \times (\mathcal{P} \circ \mathcal{R})\mathbb{E}\widetilde{X}  \ar[r]  & E \pi_1 \times_{\pi_1} (\mathcal{P} \circ \mathcal{R})\mathbb{E}\widetilde{X},
}
\]
The two spaces in the middle column of the above diagram are simply connected and the horizontal maps in the right hand side square are universal covers. It follows that $\psi_X$ induces an isomorphism on fundamental groups.  The middle map is an $\mathbb{E}$-equivalence since $$\eta_{\widetilde{X}}: \widetilde{X} \to (\mathcal{P} \circ \mathcal{R}) \mathbb{E}\widetilde{X},$$ the derived unit of the adjunction $(\mathbb{E}, \mathcal{P})$,  is the Bousfield $\mathbb{E}$-localization of $\widetilde{X}$. Hence, $\psi_X$ is a $\pi_1$-$\mathbb{E}$-equivalence. 
\end{proof}

Recall that if $X$ is a simplicial set and $\mathbb{F}$ is \textit{any} field with algebraic closure $\mathbb{E}$ then the field extension $\mathbb{F} \hookrightarrow \mathbb{E}$ induces a weak homotopy equivalence between localizations $L_{\mathbb{F}}X \xrightarrow{\simeq} L_{\mathbb{E}}X$ \cite{G95}. Furthermore, for any field $\mathbb{F}$, the  simplicial cocommutative $\mathbb{F}$-coalgebra of chains on a space, under quasi-isomorphisms, determines the space up to Bousfield $\mathbb{F}$-localization (Theorem D in \cite{G95}). 
Using Proposition \ref{algclosed}, we show that, for \textit{any} field $\mathbb{F}$, the $\mathbb{F}$-chains in $X$, regarded as a connected simplicial cocommutative $\mathbb{F}$-coalgebra up to $\Omega$-quasi-isomorphism, determines reduced Kan complexes up to $\pi_1$-$\mathbb{F}$-equivalence. This is our main theorem. 

\begin{theorem} \label{anyfield} For any field $\mathbb{F}$, two reduced Kan complexes $X$ and $Y$ are  $\pi_1$-$\mathbb{F}$-equivalent if and only if the connected simplicial cocommutative coalgebras of chains $\mathbb{F}X$ and $\mathbb{F}Y$ are  $\Omega$-quasi-isomorphic. 
\end{theorem}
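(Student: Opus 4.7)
\textit{Overview of the strategy.} The plan is to establish both implications by leveraging Proposition \ref{algclosed}, the Adams-style cobar theorem of Theorem \ref{nscadams}, and the faithful flatness of any field extension. The key observation is that passing to the algebraic closure reduces the problem to an already proven case, while Adams' theorem translates $\Omega$-quasi-isomorphism statements on the coalgebra side into homological statements about based loop spaces.

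\textit{Direction ``only if''.} Let $f\colon X\to Y$ be a $\pi_1$-$\mathbb{F}$-equivalence of reduced Kan complexes; by functoriality it suffices to show that $\mathbb{F}f$ is an $\Omega$-quasi-isomorphism, i.e.\ that $\Omega N_*(\mathbb{F}f)$ is a quasi-isomorphism of dg algebras. By Theorem \ref{nscadams}, $\Omega N_*(\mathbb{F}X)$ is naturally quasi-isomorphic to the dg algebra of $\mathbb{F}$-chains on the Moore loop space $\Omega_b|X|$, so the task reduces to showing that $\Omega_b|f|$ is an $\mathbb{F}$-homology equivalence. The components of $\Omega_b|X|$ are in bijection with $\pi_1(X)$ and each is homotopy equivalent to $\Omega_{\tilde b}|\widetilde{X}|$, giving
\[
H_*(\Omega_b|X|;\mathbb{F}) \;\cong\; \mathbb{F}[\pi_1(X)]\otimes H_*(\Omega_{\tilde b}|\widetilde{X}|;\mathbb{F}),
\]
and similarly for $Y$. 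The hypothesis supplies an isomorphism $\pi_1(f)$ and an $\mathbb{F}$-homology equivalence $\widetilde{f}$; since $\widetilde{X}$ and $\widetilde{Y}$ are simply connected, Proposition \ref{simpconnected} promotes $C_*(\widetilde{f};\mathbb{F})$ to an $\Omega$-quasi-isomorphism, so $\Omega\widetilde{f}$ is an $\mathbb{F}$-homology equivalence of loop spaces. Tensoring the two factors yields the desired equivalence.

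\textit{Direction ``if''.} Let $\mathbb{E}$ denote an algebraic closure of $\mathbb{F}$. Given a zig-zag of $\Omega$-quasi-isomorphisms in $\textbf{scCoalg}^0_{\mathbb{F}}$ between $\mathbb{F}X$ and $\mathbb{F}Y$, I would tensor it termwise with $\mathbb{E}$ over $\mathbb{F}$. Because $\mathbb{E}/\mathbb{F}$ is flat and the cobar construction commutes with base change, the resulting zig-zag in $\textbf{scCoalg}^0_{\mathbb{E}}$ connects $\mathbb{E}X\cong \mathbb{F}X\otimes_{\mathbb{F}}\mathbb{E}$ to $\mathbb{E}Y$ by $\Omega$-quasi-isomorphisms. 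Proposition \ref{algclosed} then yields a zig-zag of $\pi_1$-$\mathbb{E}$-equivalences between (reductions of) $X$ and $Y$. Finally, by the universal coefficient theorem $H_*(\widetilde{Z};\mathbb{E})\cong H_*(\widetilde{Z};\mathbb{F})\otimes_{\mathbb{F}}\mathbb{E}$ for any connected Kan complex $Z$, and the faithful flatness of the field extension $\mathbb{E}/\mathbb{F}$ implies that any isomorphism over $\mathbb{E}$ descends to one over $\mathbb{F}$. Thus every $\pi_1$-$\mathbb{E}$-equivalence is automatically a $\pi_1$-$\mathbb{F}$-equivalence, and the same zig-zag proves the claim.

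\textit{Expected obstacle.} The delicate point is the ``only if'' direction. Theorem \ref{nscadams} is invoked for non-simply connected spaces, and one must use that the components of $\Omega_b|X|$ decompose as copies of $\Omega_{\tilde b}|\widetilde{X}|$ in a way compatible with $f$. An alternative, more algebraic route would use the simplicial twisted tensor product model from Section 4 (Proposition \ref{universalcovercommutes} and Lemma \ref{Brown}) together with Theorem \ref{invarianceofbrown}, reasoning directly with $N_*(\mathbb{F}X)\otimes_{N(\tau)}\mathbb{F}[\pi_1(X)]\cong N_*(\mathbb{F}\widetilde{X})$ and invoking Proposition \ref{simpconnected} on the simply connected universal cover. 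Either approach requires the flatness hypotheses built into Proposition \ref{cobarstrong} and Theorem \ref{invarianceofbrown}, which are automatic here since $\mathbb{F}$ is a field.
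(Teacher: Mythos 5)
Your overall structure mirrors the paper's proof in both directions, and the ``if'' direction is essentially correct and identical to the paper's argument (tensor the zig-zag with the algebraic closure $\mathbb{E}$, invoke Proposition \ref{algclosed}, and descend using faithful flatness of $\mathbb{E}/\mathbb{F}$). Your explicit appeal to the universal coefficient theorem and faithful flatness fills in a step the paper leaves implicit, which is fine.

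There is, however, a genuine gap in your ``only if'' direction. You invoke Proposition \ref{simpconnected} to promote $C_*(\widetilde f;\mathbb{F})$ from a quasi-isomorphism to an $\Omega$-quasi-isomorphism ``since $\widetilde X$ and $\widetilde Y$ are simply connected.'' But Proposition \ref{simpconnected} applies to \emph{simply connected dg coalgebras}, meaning dg coalgebras $C$ with $C_1 = 0$ and $C_0 \cong R$. The normalized chains $N_*(\mathbb{F}\widetilde X)$ on a simply connected simplicial set typically have $N_1 \neq 0$ --- simple connectivity of the space does not kill nondegenerate $1$-simplices. So the hypothesis of Proposition \ref{simpconnected} is not met, and the promotion to an $\Omega$-quasi-isomorphism is not licensed as stated. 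The paper circumvents this by replacing $\widetilde X$ with the homotopy equivalent Kan complex $\mathrm{Sing}^1|\widetilde X|$ of singular simplices collapsing the $1$-skeleton to the basepoint; its normalized chains do have $N_1 = 0$, and one passes back and forth through the homotopy equivalence using the $2$-out-of-$3$ property. Without this (or an equivalent) reduction, your application of Proposition \ref{simpconnected} is not valid. Your decomposition $H_*(\Omega_b|X|;\mathbb{F}) \cong \mathbb{F}[\pi_1(X)] \otimes H_*(\Omega_{\tilde b}|\widetilde X|;\mathbb{F})$ and the subsequent use of Theorem \ref{nscadams} are otherwise in line with the paper's argument (the paper packages this as a commutative square of graded vector spaces), so once the $\mathrm{Sing}^1$ step is inserted the proof goes through.
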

\begin{proof}

We first show that if $X$ and $Y$ are $\pi_1$-$\mathbb{F}$-equivalent then $\mathbb{F}X$ and $\mathbb{F}Y$ are $\Omega$-quasi-isomorphic. It suffices to prove that any map $f: X \to Y$ between reduced Kan complexes which induces an isomorphism on fundamental groups $$\pi_1(f): \pi_1(X)\cong \pi_1(Y) :=\pi_1$$ and an $\mathbb{F}$-equivalence between universal covers  $$\widetilde{f}: \widetilde{X} \to \widetilde{Y},$$ induces an $\Omega$-quasi-isomorphism $$N_*(f): N_*(\mathbb{F}X ) \to N_*(\mathbb{F}Y)$$ of connected dg coalgebras. 

Since $X$ and $Y$ are reduced, $\widetilde{X}$ and $\widetilde{Y}$ have induced base points. Let $\text{Sing}^1|\widetilde{X}|$ be the singular Kan complex consisting of all singular simplices $\sigma: |\Delta^n| \to |\widetilde{X}|$, for any $n\geq0$, that collapse the $1$-skeleton of $|\Delta^n|$ to the basepoint of $|\widetilde{X}|$ and define $\text{Sing}^1|\widetilde{Y}|$ similarly. Since $\widetilde{X}$ and $\widetilde{Y}$  are both simply connected, the inclusions of Kan complexes $$\text{Sing}^1|\widetilde{X}| \hookrightarrow \text{Sing}|\widetilde{X}|$$ and $$\text{Sing}^1|\widetilde{Y}| \hookrightarrow \text{Sing}|\widetilde{Y}|$$ are a natural homotopy equivalences. We claim that the map $$\text{Sing}^1|\widetilde{f}|: \text{Sing}^1|\widetilde{X}| \to \text{Sing}^1|\widetilde{Y}|$$
is an $\mathbb{F}$-equivalence, or equivalently, that it induces a quasi-isomorphism 
$$N_*(|\widetilde{f}|): N_*(\mathbb{F} \text{Sing}^1|\widetilde{X}|) \to N_*(\mathbb{F}\text{Sing}^1|\widetilde{Y}|)$$
between the simply connected dg coalgebras of normalized chains. This follows from the 2-out-of-3 axiom for $\mathbb{F}$-equivalences by considering the commutative diagram
\[
\xymatrix{
\text{Sing}^1|\widetilde{X}| \ar[r]^{\text{Sing}^1|\widetilde{f}|} \ar[d]^{\simeq} &\text{Sing}^1|\widetilde{Y}|  \ar[d]_{\simeq}\\
 \text{Sing}|\widetilde{X}|\ar[r]^{\text{Sing}|\widetilde{f}|} & \text{Sing}|\widetilde{X}|
 \\
\widetilde{X} \ar[u]^{\simeq} \ar[r]_{\widetilde{f}} & \widetilde{Y} \ar[u]_{\simeq}
}
\]
where the vertical maps are all homotopy equivalences, and consequently $\mathbb{F}$-equivalences, and the bottom horizontal map $\tilde{f}$ is an $\mathbb{F}$-equivalence by assumption.

By Proposition \ref{simpconnected}, quasi-isomorphisms of simply connected dg coalgebras are $\Omega$-quasi-isomorphisms, hence we get an induced quasi-isomorphism
$$\Omega N_*(|\widetilde{f}|): \Omega N_*(\mathbb{F} \text{Sing}^1|\widetilde{X}|) \to  \Omega N_*(\mathbb{F} \text{Sing}^1|\widetilde{Y}|) $$
of dg algebras. By Adams' classical cobar theorem, it follows that $\widetilde{f}: \widetilde{X} \to \widetilde{Y}$ induces a quasi-isomorphism
$$ C_*(\Omega |\widetilde{f}|): C_*(\Omega | \widetilde{X}|;\mathbb{F} ) \to C_*(\Omega |\widetilde{Y}|; \mathbb{F} )$$
between the dg algebras of normalized  singular chains on the based loop spaces of $\widetilde{X}$ and $\widetilde{Y}$, respectively. It now follows that the dg algebra map $$C_*(\Omega |f|): C_*(\Omega |X|; \mathbb{F}) \to C_*(\Omega| Y|; \mathbb{F})$$ is a quasi-isomorphism since we have a commutative square

\[
\xymatrix{
 H_*(\Omega |\widetilde{X}|;\mathbb{F}) \otimes \mathbb{F}[\pi_1] \ar[d]_{H_*(\Omega |\widetilde{f}|) \otimes id} \ar[r] &H_*(\Omega |X|; \mathbb{F}) \ar[d]^{H_*(\Omega |f|)}\\
 H_*(\Omega| \widetilde{Y}|;\mathbb{F}) \otimes \mathbb{F}[\pi_1] \ar[r] &H_*(\Omega |Y|; \mathbb{F})
}
\]
where the left vertical map and the horizontal maps, which are induced by projecting from the universal cover to the base and then multiplying of loops, are isomorphisms of graded vector spaces. By the extension of Adams' cobar theorem to reduced Kan complexes, recalled in Theorem \ref{nscadams}, it follows that $N_*(f)$ is an $\Omega$-quasi-isomorphism. 

To prove the converse suppose the connected simplicial cocommutative coalgebras of chains $\mathbb{F}X$ and $\mathbb{F}Y$ are $\Omega$-quasi-isomorphic through a zig-zag 
\begin{equation}\label{zigzag6}
\mathbb{F}X \xrightarrow{\simeq_{\Omega}}C_1 \xleftarrow{\simeq_{\Omega}} \cdots \xrightarrow{\simeq_{\Omega}} C_n \xleftarrow{\simeq_{\Omega}} \mathbb{F}Y.
\end{equation}
Let $\mathbb{E}$ be the algebraic closure of $\mathbb{F}$ and tensor zig-zag \ref{zigzag6} with $\mathbb{E}$ to obtain a zig-zag of simplicial $\mathbb{E}$-coalgebras
\begin{equation}
\mathbb{E}X \cong \mathbb{F}X \otimes_{\mathbb{F}} \mathbb{E} \xrightarrow{\Omega}C_1\otimes_{\mathbb{F}} \mathbb{E} \xleftarrow{\Omega} \cdots \xrightarrow{\Omega} C_n \otimes_{\mathbb{F}} \mathbb{E}  \xleftarrow{\Omega}\mathbb{F}X \otimes_{\mathbb{F}} \mathbb{E} \cong \mathbb{E}Y.
\end{equation}
All the maps above are $\Omega$-quasi-isomorphisms because for any dg $\mathbb{F}$-coalgebra $C$, $\Omega(C \otimes_{\mathbb{F}} \mathbb{E})\cong\Omega(C) \otimes_{\mathbb{F}} \mathbb{E}$ and tensoring over a field preserves quasi-isomorphisms. It follows from Proposition \ref{algclosed} that $X$ and $Y$ are $\pi_1$-$\mathbb{E}$-equivalent, which implies $X$ and $Y$ are $\pi_1$-$\mathbb{F}$-equivalent.
\end{proof}

We now consider a reformulation of Theorem \ref{anyfield} in the case $\mathbb{F}=\mathbb{Q}$. A map $f: X \to Y$ between reduced Kan complexes is called a $\pi_1$-\textit{rational homotopy equivalence} if it induces an isomorphism on fundamental groups $$\pi_1(f): \pi_1(X) \xrightarrow{\cong} \pi_1(Y)$$ and an isomorphism between rationalized higher homotopy groups $$\pi_n(f) \otimes \mathbb{Q} : \pi_n(X) \otimes \mathbb{Q} \xrightarrow{\cong} \pi_n(Y) \otimes \mathbb{Q}$$ for $n\geq 2$ \cite{RWZ19}. The following corollary extends the classification theorem of rational homotopy theory to path-connected spaces with arbitrary fundamental group. 

\begin{corollary} Two reduced Kan complexes $X$ and $Y$ are $\pi_1$-rationally homotopy equivalent if and only if the connected simplicial cocommutative coalgebras of chains $\mathbb{Q}X$ and $\mathbb{Q}$Y are $\Omega$-quasi-isomorphic. 
\end{corollary}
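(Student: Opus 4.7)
The corollary is obtained by combining Theorem~\ref{anyfield} in the case $\mathbb{F}=\mathbb{Q}$ with the classical dictionary between rational homology and rational homotopy for simply connected spaces. Specializing Theorem~\ref{anyfield}, the $\Omega$-quasi-isomorphism class of the connected simplicial cocommutative coalgebra $\mathbb{Q}X$ is precisely the $\pi_1$-$\mathbb{Q}$-equivalence class of $X$. It therefore suffices to show that a map $f: X \to Y$ between reduced Kan complexes is a $\pi_1$-$\mathbb{Q}$-equivalence if and only if it is a $\pi_1$-rational homotopy equivalence; the statement about zig-zag classes then follows by applying this class-level identification to each map in a zig-zag.

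So fix $f: X \to Y$ and assume $\pi_1(f)$ is an isomorphism, a condition required by both notions. Let $\widetilde{f}: \widetilde{X} \to \widetilde{Y}$ be the induced map on universal covers. By covering space theory, the projections $\widetilde{X}\to X$ and $\widetilde{Y}\to Y$ induce isomorphisms on $\pi_n$ for $n\geq 2$, so $f$ induces an isomorphism on $\pi_n(-)\otimes \mathbb{Q}$ for $n\geq 2$ if and only if $\widetilde{f}$ does. By definition, $f$ is a $\pi_1$-$\mathbb{Q}$-equivalence if and only if $\widetilde{f}$ is a $\mathbb{Q}$-homology equivalence, so everything boils down to the comparison, between simply connected Kan complexes, of rational homology equivalences with maps inducing isomorphisms on $\pi_\ast\otimes \mathbb{Q}$. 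This is classical: Serre's mod-$\mathcal{C}$ theory (or equivalently the existence and characterization of the Bousfield--Kan $\mathbb{Q}$-localization on simply connected spaces recalled in the remark after Proposition~\ref{fiberwiseequivalence}) shows that a map of simply connected spaces is a $\mathbb{Q}$-homology equivalence if and only if it is a $\pi_\ast\otimes \mathbb{Q}$-equivalence, since both conditions are equivalent to $\widetilde{f}$ becoming a weak equivalence after $\mathbb{Q}$-localization.

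Assembling the two bi-implications, a map is a $\pi_1$-$\mathbb{Q}$-equivalence if and only if it is a $\pi_1$-rational homotopy equivalence, hence the two classes of zig-zags coincide, and the corollary follows from Theorem~\ref{anyfield}. There is no real obstacle in this argument; the content of the corollary is entirely in the main theorem, and the additional ingredient is only the standard translation between rational homology and rational homotopy in the simply connected case, applied fiberwise to the universal cover of the Borel fibration $\widetilde{X}\to X\to B\pi_1(X)$.
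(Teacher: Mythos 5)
Your argument is correct and follows essentially the same route as the paper's proof: invoke Theorem~\ref{anyfield} for $\mathbb{F}=\mathbb{Q}$ and then observe that $\pi_1$-$\mathbb{Q}$-equivalence agrees with $\pi_1$-rational homotopy equivalence because, for simply connected spaces such as the universal covers, $\mathbb{Q}$-homology equivalences coincide with maps inducing isomorphisms on $\pi_\ast\otimes\mathbb{Q}$. You simply spell out the classical Serre/Bousfield--Kan justification that the paper leaves implicit.
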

\begin{proof} This follows directly from Theorem \ref{anyfield} because the notion of $\pi_1$-$\mathbb{Q}$-equivalence is equivalent to that of $\pi_1$-rational equivalence, since $\mathbb{Q}$-equivalences between simply connected spaces (such as the universal covers) are exactly maps that induce isomorphisms on rationalized homotopy groups. 
\end{proof}

We would like to point out that in this corollary we do not need any finiteness assumptions on our spaces and that the simplicial cocommutative coalgebra of chains is therefore a complete invariant of the  $\pi_1$-rational homotopy type of reduced Kan complexes. The $\pi_1$-rational homotopy type was also studied in \cite{GHT00}, where they constructed a version of the minimal models for certain finite type spaces. Their results unfortunately had rather strong finiteness assumptions that for example excluded simple spaces like $S^1 \vee S^3$.

\section{The integral case for spaces with universal cover of finite type}

In this section we apply the algebraic machinery developed in sections 4 and 5 to prove a partial case of the conjecture posed in the introduction. First, one of the directions of the conjecture holds in complete generality.

\begin{theorem} If $X$ and $Y$ are homotopy equivalent reduced Kan complexes then the connected simplicial cocommutative coalgebras of integral chains $\mathbb{Z}X$ and $\mathbb{Z}Y$ are $\Omega$-quasi-isomorphic. 
\end{theorem}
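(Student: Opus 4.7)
The plan is to deduce the result directly from the extended cobar theorem stated as Theorem \ref{nscadams}, using the naturality of the quasi-isomorphism $\varphi:\Omega C_*(X;\mathbb{Z})\simeq C_*(\Omega_b|X|)$. Specifically, given a homotopy equivalence $f:X\to Y$ of reduced Kan complexes, it suffices to produce an $\Omega$-quasi-isomorphism of connected simplicial cocommutative coalgebras between $\mathbb{Z}X$ and $\mathbb{Z}Y$, and the natural candidate is simply $\mathbb{Z}f:\mathbb{Z}X\to \mathbb{Z}Y$. Unwinding definitions, one needs to check that the induced dg algebra map
\[
\Omega N_*(\mathbb{Z}f):\Omega N_*(\mathbb{Z}X)\longrightarrow \Omega N_*(\mathbb{Z}Y)
\]
is a quasi-isomorphism.

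First I would observe that since $f$ is a homotopy equivalence of Kan complexes, its geometric realization $|f|:|X|\to|Y|$ is a (pointed) homotopy equivalence of topological spaces, and the induced map of topological monoids of Moore loops $\Omega_b|f|:\Omega_b|X|\to \Omega_b|Y|$ is then also a homotopy equivalence. Consequently, the map of singular chains
\[
C_*(\Omega_b|f|):C_*(\Omega_b|X|)\longrightarrow C_*(\Omega_b|Y|)
\]
is a quasi-isomorphism of dg algebras.

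Next I would invoke Theorem \ref{nscadams}, which supplies a natural quasi-isomorphism of dg algebras $\varphi_X:\Omega C_*(X;\mathbb{Z})\xrightarrow{\simeq}C_*(\Omega_b|X|)$ and analogously for $Y$. Naturality yields the commutative square
\[
\xymatrix{
\Omega N_*(\mathbb{Z}X)\ar[r]^-{\varphi_X}_-{\simeq} \ar[d]_{\Omega N_*(\mathbb{Z}f)}& C_*(\Omega_b|X|) \ar[d]^{C_*(\Omega_b|f|)}_{\simeq}\\
\Omega N_*(\mathbb{Z}Y)\ar[r]^-{\varphi_Y}_-{\simeq}& C_*(\Omega_b|Y|).
}
\]
Three of the four maps are quasi-isomorphisms, so by the two-out-of-three property $\Omega N_*(\mathbb{Z}f)$ is a quasi-isomorphism as well, which is exactly the statement that $\mathbb{Z}f$ is an $\Omega$-quasi-isomorphism of connected simplicial cocommutative coalgebras. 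Hence $\mathbb{Z}X$ and $\mathbb{Z}Y$ are $\Omega$-quasi-isomorphic via the single map $\mathbb{Z}f$, with no zig-zag needed.

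There is really no main obstacle in this direction: all the content has been packaged into the extension of Adams' cobar theorem recalled in Theorem \ref{nscadams}. The only point that requires a line of care is the naturality of $\varphi$, but this is built into the construction from \cite{RZ16, R19} that factors $\varphi$ through a cubical model for the left adjoint of the homotopy coherent nerve, which is manifestly functorial in maps of simplicial sets.
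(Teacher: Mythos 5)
Your proof is correct, and it is a cleaner, more direct route than the paper's. The paper proves this theorem by invoking ``the same argument as in the proof of the forward direction of Theorem \ref{anyfield}'' with $R=\mathbb{Z}$; that argument is built to handle the weaker hypothesis of a $\pi_1$-$R$-equivalence, and so passes through universal covers, the $\mathrm{Sing}^1$ modification, Proposition \ref{simpconnected}, and the splitting $H_*(\Omega|\widetilde{X}|;R)\otimes R[\pi_1]\cong H_*(\Omega|X|;R)$ before finally appealing to Theorem \ref{nscadams}. You instead observe that when $f$ is an honest homotopy equivalence of Kan complexes, the induced map $\Omega_b|f|$ on Moore loop spaces is already a homotopy equivalence (geometric realization preserves homotopy equivalences, CW complexes are well-pointed, and the based loop space functor preserves homotopy equivalences of well-pointed spaces), so $C_*(\Omega_b|f|)$ is trivially a quasi-isomorphism of dg algebras and the naturality square from Theorem \ref{nscadams} plus two-out-of-three finishes the argument. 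This collapses all the intermediate machinery that is only needed in the weaker $\pi_1$-$R$-equivalence setting, and it also produces a single direct $\Omega$-quasi-isomorphism $\mathbb{Z}f$ rather than a zig-zag. The only mild point worth making explicit is that ``homotopy equivalent reduced Kan complexes'' does supply a genuine map $f\colon X\to Y$: Kan complexes that are weakly equivalent are homotopy equivalent, and for reduced simplicial sets every map is automatically pointed. Incidentally, this statement (with the same underlying mechanism) already appears in \cite{RWZ18}, as the introduction notes.
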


\begin{proof} The proof for this fact follows by exactly the same argument as in the proof of the forward direction of Theorem \ref{anyfield}, which holds for any integral domain $R$. Note that a map is a $\pi_1$-$\mathbb{Z}$-equivalence if and only if it is a weak homotopy equivalence. 
\end{proof}

A simplicial $R$-coalgebra $C$ is said to be $R$-\textit{projective} if each $C_n$ is a projective $R$-module.  Note that when $R=\mathbb{Z}$ this is equivalent to a simplicial coalgebra $C$ such that each $C_n$ is  free as an abelian group. We further say that that the simplicial $R$-coalgebra $C$ is of finite type if $H_*(C)$ is finitely generated as an $R$-module in each degree. The combination of projective and finite type implies that we can dualize and use cochains instead of chains without losing any homotopical information. We prove a special case of the converse direction of our conjecture by applying our algebraic methods together with the main theorem of \cite{M06}, which says that the integral $E_{\infty}$-algebra of singular cochains determines finite type nilpotent homotopy types, at the level of universal covers. 

\begin{theorem} Let $X$ and $Y$ be two reduced Kan complexes whose universal covers are of finite type. If $\mathbb{Z}$X and $\mathbb{Z}Y$ can be connected by a zig-zag of $\Omega$-quasi-isomorphisms of connected simplicial cocommutative $\mathbb{Z}$-projective coalgebras, then $X$ and $Y$ are homotopy equivalent.  
\end{theorem}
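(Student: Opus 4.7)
The plan is to combine the fundamental bialgebra and universal cover constructions of Sections 4 and 5 with Mandell's integral detection theorem for $E_{\infty}$-algebras of cochains \cite{M06}. Since the universal covers $\widetilde{X}$ and $\widetilde{Y}$ are simply connected, hence nilpotent, and of finite type by hypothesis, Mandell's theorem applies to them, and the main task is to feed the correct equivariant $E_{\infty}$-input into his machinery, extracted from the given zig-zag of $\Omega$-quasi-isomorphisms of $\mathbb{Z}$-projective simplicial cocommutative coalgebras.

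First, from the zig-zag, Theorem \ref{adamscobar} together with Proposition \ref{coproductonH_0} produces a chain of isomorphisms of fundamental bialgebras connecting $H_0(\Omega N_*(\mathbb{Z}X)) \cong \mathbb{Z}[\pi_1(X)]$ with $H_0(\Omega N_*(\mathbb{Z}Y)) \cong \mathbb{Z}[\pi_1(Y)]$, and in particular an isomorphism $\pi_1(X) \cong \pi_1(Y) =: \pi_1$. Next, I would apply the universal cover functor $\widetilde{\;}$ termwise to the zig-zag: Theorem \ref{universalcoverfunctor} (which uses the $\mathbb{Z}$-projectivity hypothesis) turns each arrow into a quasi-isomorphism of simplicial cocommutative coalgebras, and Proposition \ref{universalcovercommutes} identifies the endpoints naturally with $\mathbb{Z}\widetilde{X}$ and $\mathbb{Z}\widetilde{Y}$. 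Since the universal cover construction is natural in the right $\pi(C)$-action, this zig-zag is moreover $\pi_1$-equivariant. Passing to normalized chains together with the $E_{\infty}$-coalgebra refinement of the Alexander--Whitney coproduct coming from the simplicial cocommutative structure, and then dualizing using the finite-type and projectivity hypotheses on the universal covers, produces a $\pi_1$-equivariant zig-zag of quasi-isomorphisms of $E_{\infty}$-algebras between $C^*(\widetilde{X};\mathbb{Z})$ and $C^*(\widetilde{Y};\mathbb{Z})$.

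Applying Mandell's theorem \cite{M06} to the simply connected finite-type spaces $\widetilde{X}$ and $\widetilde{Y}$ then yields a weak homotopy equivalence $\widetilde{X} \simeq \widetilde{Y}$. The main obstacle I foresee is upgrading this to a $\pi_1$-equivariant statement, so that passing to Borel constructions gives $X \simeq E\pi_1 \times_{\pi_1} \widetilde{X} \simeq E\pi_1 \times_{\pi_1} \widetilde{Y} \simeq Y$. My strategy is to track the naturality of Mandell's constructions under the $\pi_1$-action throughout: since his fibrant replacements and derived unit maps at the level of $E_{\infty}$-algebras are functorial, the $\pi_1$-equivariant zig-zag of cochain quasi-isomorphisms can be converted into a $\pi_1$-equivariant zig-zag of equivalences realizing $\widetilde{X} \simeq \widetilde{Y}$ in the category of $\pi_1$-spaces, which descends to the desired equivalence on quotients. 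Equivalently, one can work fibrewise over $B\pi_1$ with the fibrations $\widetilde{X} \to X \to B\pi_1$ and $\widetilde{Y} \to Y \to B\pi_1$, arguing via a Borel-construction version of Mandell's detection that simply connected finite-type fibre equivalences induced by equivariant cochain quasi-isomorphisms assemble into total-space equivalences, much as in the proof of Theorem \ref{anyfield} but using Mandell's integral result in place of Goerss' localization statement.
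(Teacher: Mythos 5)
Your proposal follows essentially the same route as the paper's proof: extract $\pi_1$ via the fundamental bialgebra, apply the universal-cover functor termwise to obtain a $\pi_1$-equivariant zig-zag of quasi-isomorphisms of simplicial $\mathbb{Z}$-projective coalgebras, dualize via the $E_{\infty}$-coalgebra structure on normalized chains to a $\pi_1$-equivariant zig-zag of $E_{\infty}$-algebra quasi-isomorphisms, invoke Mandell's integral detection theorem (using naturality of his comparison map at the point-set level to keep track of the $\pi_1$-equivariance), and finish with the Borel construction as in Proposition~\ref{algclosed}. The only minor omission is the remark that Mandell's framework requires a cofibrant $E_{\infty}$-operad while the Berger--Fresse construction uses the merely $\Sigma$-cofibrant Barratt--Eccles operad, which the paper notes must be patched by a cofibrant replacement.
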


\begin{proof}
Suppose that $\mathbb{Z}$X and $\mathbb{Z}Y$ can be connected by a zig-zag of $\Omega$-quasi-isomorphisms of connected simplicial cocommutative $\mathbb{Z}$-projective coalgebras
$$\mathbb{Z}X \xrightarrow{\simeq_{\Omega}}C_1 \xleftarrow{\simeq_{\Omega}} \cdots \xrightarrow{\simeq_{\Omega}} C_n \xleftarrow{\simeq_{\Omega}} \mathbb{Z}Y.$$
By Theorem \ref{adamscobar}, $\pi_1(X) \cong \pi_1(Y):= \pi_1$. By Theorems \ref{adamscobar} and \ref{universalcoverfunctor} we obtain a zig-zag of simplicial cocommutative coalgebras
$$\mathbb{Z}\widetilde{X}  \xrightarrow{\simeq} \widetilde{C_1} \xleftarrow{\simeq} \cdots \xrightarrow{\simeq} \widetilde{C_n} \xleftarrow{\simeq}  \mathbb{Z} \widetilde{Y}$$
where each object is a simplicial cocommutative $\mathbb{Z}$-projective coalgebra equipped with a natural $\pi_1$-action and each map is a $\pi_1$-equivariant quasi-isomorphism.

If $C$ is any simplicial cocommutative coalgebra, then the dg coassociative coalgebra of normalized chains $N_*(C)$ has a natural $E_{\infty}$-coalgebra struture through the construction described in \cite{BF04}. We remark here that strictly speaking Mandell's results require a cofibrant $E_\infty$-operad, while the Barratt-Eccles operad which Berger and Fresse use is just a $\Sigma$-cofibrant operad, this can be fixed by taking a cofibrant replacement. If $C=\mathbb{Z}S$ for some simplicial set $S$, this recovers the $E_{\infty}$-coalgebra of normalized chains on the simplicial set $S$. Moreover, if $C$ is $\mathbb{Z}$-projective, so is $N_*(C)$.  Applying the normalized chains functor to the above zig-zag we obtain a zig-zag of $\pi_1$-equivariant quasi-isomorphisms between $\mathbb{Z}$-projective $E_{\infty}$-coalgebras
$$N_*(\mathbb{Z}\widetilde{X} )  \xrightarrow{\simeq} N_*( \widetilde{C_1}) \xleftarrow{\simeq} \cdots \xrightarrow{\simeq} N_*(\widetilde{C_n} ) \xleftarrow{\simeq} N_*( \mathbb{Z} \widetilde{Y}).$$
Since each object above is $\mathbb{Z}$-projective, if we take linear duals (i.e. apply $\text{Hom}_{\mathbb{Z}}( \_ , \mathbb{Z})$) we obtain a zig-zag of $\pi_1$-equivariant quasi-isomorphisms of $E_{\infty}$-algebras

$$N^*(\mathbb{Z}\widetilde{X} )  \xleftarrow{\simeq} N^*( \widetilde{C_1}) \xrightarrow{\simeq} \cdots \xleftarrow{\simeq} N^*(\widetilde{C_n} ) \xrightarrow{\simeq} N^*( \mathbb{Z} \widetilde{Y}).$$

Since $\widetilde{X}$ and $\widetilde{Y}$ are of finite type by assumption, the main theorem in \cite{M06} implies that $\widetilde{X}$ and $\widetilde{Y}$ are homotopy equivalent. Furthermore, we may conclude $\widetilde{X}$ and $\widetilde{Y}$ are $\pi_1$-equivariantly homotopy equivalent. In fact, Mandell proves the main theorem in \cite{M06} by constructing a map  $$\epsilon: [N^*(\mathbb{Z}\widetilde{Y}), N^*(\mathbb{Z}\widetilde{X})]_{E_{\infty}} \to [\widetilde{X}, \widetilde{Y}]$$
which is natural at the level of homotopy categories, where $[ \_ , \_ ]_{E_{\infty}}$ and $[ \_, \_]$ denote the Hom sets  in the homotopy category of $E_{\infty}$-algebras and spaces, respectively, see Theorem 0.1 of \cite{M06}. By fixing a functorial fibrant and cofibrant replacements,  $\epsilon$ may be constructed so that it associates functorially a zig-zag of maps of spaces to any zig-zag of maps between $E_{\infty}$-algebras before passing to the homotopy category. Hence, this model for $\epsilon$ sends a zig-zag of $\pi_1$-equivariant maps of $E_{\infty}$-algebras to a zig-zag of $\pi_1$-equivariant maps between spaces. The desired conclusion now follows by taking Borel constructions as in the proof of Proposition \ref{algclosed}.

\end{proof}

\begin{remark}
It seems plausible that if $E_{\infty}$-algebras are replaced by flat $E_{\infty}$-coalgebras the finite type assumption in Mandell’s theorem might be dropped. Such a version of Mandell’s theorem could be used to prove the conjecture from the introduction. Another approach could be to prove an integral version of Theorem 10 (Theorem D in [G95]). Proving such a theorem is beyond the scope of this paper and will be the topic of future work.
\end{remark}

\bibliographystyle{plain}

\begin{thebibliography}{9999999}

\bibitem[A56]{A56}
J. Adams,
{On the cobar construction,}
 \textit{Proc. Nat. Acad. Sci. U.S.A}. \textbf{42} (1956), 409-412.
 
\bibitem[B98]{B98}
H. J. Baues,
{The cobar construction as a Hopf algebra,}
 \textit{Invent. Math.,} \textbf{32}(3) (1998), 467- 489.
 
\bibitem[BF04]{BF04}
C. Berger, B. Fresse, 
{Combinatorial operad actions on cochains,} 
\textit{Math. Proc. Cambridge Philos. Soc.} \textbf{137} (2004), no. 1, 135-174.

 \bibitem[B75]{B75}
 A. Bousfield, 
 {The localization of spaces with respect to homology,}
 \textit{Topology} \textbf{14} (1975) 133-150
 
 \bibitem[BK71]{BK71}
 A. Bousfield, D. Kan,
 {Localization and completion in homotopy theory,}
 \textit{Bull. Amer. Math. Soc.}, \textbf{77} (1971) 1006-1010
 
 \bibitem[BK72]{BK72}
 A. Bousfield, D. M. Kan, 
{Homotopy limits, completions and localizations}, Lecture Notes in Mathematics, Vol. 304, Springer-Verlag, Berlin-New York, 1972.
 
\bibitem[B59]{B59}
E. H. Brown,
{Twisted tensor products I,} \textit{ Ann. of Math.,} (1959) 223-246.

\bibitem[CHL18]{CHL18}
J. Chuang, J. Holstein and A. Lazarev,
{Homotopy theory of monoids and derived localization,}
arXiv:1810.00373

\bibitem[C71]{C71}
E. B. Curtis,
{Simplicial homotopy theory,}
\textit{Adv. Math.} 6 (1971), 107-209


\bibitem[DK01]{DK01}
J. F. Davis and P. Kirk, 
{Lecture notes in algebraic topology,}
 vol. 35 of \textit{Graduate Studies in Mathematics.} American Mathematical Society, Providence, RI, 2001.
 
\bibitem[EM53]{EM53}
S. Eilenberg and S. Mac Lane, 
{On the groups of $H(\Pi,n)$ I,}
\textit{Ann. of Math.} (2) 58 (1953), 55-106.
 
\bibitem[G95]{G95}
 P. G. Goerss,
 {Simplicial chains over a field and p-local homotopy theory,} 
 \textit{Math. Z.} 220(4) (1995), 523-544.


\bibitem[GHT00]{GHT00}
A. G\'omez-Tato, S, Halperin, D. Tanr\'e, 
{Rational homotopy theory for non-simply connected spaces,}
\textit{Trans. Amer. Math. Soc.} 352 (2000) 1493-1525

\bibitem[HT10]{HT10}
K. Hess and A. Tonks,
{The loop group and the cobar construction,}
\textit{Proc. Amer. Math. Soc} \textbf{138} (2010), 1861-1876.

\bibitem[HS97]{HS97}
V. Hinich and V. Schechtman,
 {Deformation theory and Lie algebra homology I and II,}
 \textit{Algebra Colloq.} \textbf{4} (1997), 213-240 \text{ and } 291-0316.

\bibitem[HMS74]{HMS74}
D. Husemoller, J.C. Moore, and J. Stasheff, 
{Differential homological algebra and homogeneous spaces,}
\textit{J. Pure Appl. Algebra} \textbf{5} (1974), 113-185

 \bibitem[K03]{K03}
T. Kadeishvili, 
{Cochain operations defining Steenrod $\smile_i$-products in the bar construction,}
\textit{Georgian Math. J.} 
\textbf{10} (2003) 115-125

\bibitem[KS05]{KS05} 
T. Kadeishvili and S. Saneblidze, A cubical model for a fibration, \textit{J. Pure Appl. Algebra} 196,  no. 2-3, (2005), 203-228.

\bibitem[Ko09]{Ko09}
M. Kontsevich
{Symplectic geometry of homological algebra,}
preprint available at the author's homepage, 2009.

\bibitem[LH03]{LH03}
K. Lefevre-Hasegawa, Sur les A-infini cat\'egories, PhD thesis, Univ. Paris 7, (2003), arXiv:math/0310337

\bibitem[LV12]{LV12}
J. L. Loday and B. Vallette,
{Algebraic operads,}
\textit{Grundlehren der Mathematischen Wissenschaften} 346. Springer-Verlag, Berlin, 2012.

\bibitem[Lu09]{Lu09}
J. Lurie,
{Higher topos theory,}
volume 170 of \textit{Annals of Mathematics Studies.} Princeton University Press, (2009).

\bibitem[Lu11]{Lu11}
J. Lurie,
{Derived Algebraic Geometry XIII: Rational and p-adic Homotopy Theory}, (2011) available on author's webpage.

\bibitem[M01]{M01}
M. Mandell, $E_{\infty}$-algebras and p-adic homotopy theory, \textit{Topology}, \textbf{40} (2001), no. 1, 43-94.

\bibitem[M06]{M06}
M. Mandell, Cochains and homotopy type, \textit{Inst. Hautes \'Etudes Sci. Publ. Math.} , \textbf{103}, (2006), 213-246

\bibitem[MP11]{MP11}
J.P. May, K. Ponto,
More concise algebraic topology: localization, completion, and model
categories.
\textit{The University of Chicago Press} (2011)

\bibitem[P11]{P11}
L. Positselski,
{Two kinds of derived categories, Koszul duality, and comodule-contramodule correspondence}
\textit{Memoirs of the American Mathematical Society,} \textbf{212} (996), 2011.


\bibitem[Q69]{Q69}
D. Quillen,
{Rational homotopy theory,}
 \textit{Ann. of Math.} \textbf{90} (2) (1969), 205-295 
 
 \bibitem[R13]{R13}
 G. Raptis,
  {Simplicial presheaves of coalgebras,} \textit{Algebr. Geom. Topol.} \textbf{13} (2013), 1967-2000.
 
 \bibitem[R19]{R19}
 M. Rivera,
 {Adams's cobar construction revisited,}
 \textit{submitted}, (2019) arXiv:1910.08455 
 

\bibitem[RZ16]{RZ16}
M. Rivera, M. Zeinalian, 
{Cubical rigidification, the cobar construction and the based loop space,}
\textit{Algebr. Geom. Topol.}, \textbf{18} (7) (2018), 3789-3820.

 \bibitem[RZ17]{RZ17}
M. Rivera, M. Zeinalian, 
{The homotopy colimit of an $\infty$-local system as a twisted tensor product,} 
\textit{Higher Structures}, 4(1) (2020) 33-56.
arXiv:1805.01264
 
 \bibitem[RZ19]{RZ19}
M. Rivera, M. Zeinalian, 
{Singular chains and the fundamental group,} \textit{Fund. Math} 253, 297-316 (2020)
arXiv:1807.06410

\bibitem[RWZ18]{RWZ18}
M. Rivera, F. Wierstra, M. Zeinalian,
{The functor of singular chains detects weak homotopy equivalences,} \textit{Proc. Amer. Math. Soc.} 147 (2019), 4987-4998 
arXiv:1808.10237

\bibitem[RWZ19]{RWZ19}
M. Rivera, F. Wierstra, M. Zeinalian,
{Rational homotopy equivalences and singular chains,}  to appear in Algebraic and Geometric Topology, (2019)
arXiv:1906.03655

\bibitem[S61]{S61}
R. H. Szczarba,
{The homology of twisted cartesian products,} \textit{Trans. Amer. Math. Soc.} vol. 100, (1961), 197- 216 

\bibitem[St47]{St47}
N. Steenrod,
{Products of cocycles and extensions of mappings,} \textit{Ann. Math} (2) 48 (1947), 290-320

\bibitem[DS70]{DS70}
D. Sullivan,
{Geometric topology: localization, periodicity and Galois symmetry,}
vol. 8, K-Monographs in Mathematics. Springer, Dordrecht, 2005. The 1970 MIT notes, Edited and with a preface by Andrew Ranicki.

\bibitem[S77]{S77}
D. Sullivan,
{Infinitesimal computations in topology,} \textit{Inst. Hautes \'Etudes Sci. Publ. Math.} \textbf{47}, (1977), 269-331

\bibitem[Y19]{Y19}
A. Yuan,
{Integral Models for Spaces via the Higher Frobenius,} (2019), arXiv:1910.00999 




\end{thebibliography}


\end{document}